\numberwithin{equation}{section}
\DeclareMathAlphabet{\mathscrbf}{OMS}{mdugm}{b}{n}
\DeclareMathOperator{\diag}{diag}
\DeclareMathOperator{\Ad}{Ad}
\DeclareMathOperator{\ad}{ad}
\DeclareMathOperator{\Id}{Id}
\DeclareMathOperator{\tr}{tr}
\DeclareMathOperator{\Span}{span}
\DeclareMathOperator{\rk}{rk}
\DeclareMathOperator{\dd}{d}
\newcommand{\fr}{\mathfrak}
\newcommand{\al}{\alpha}
\newcommand{\be}{\beta}
\newcommand{\bb}{\mathbb}
\newcommand{\mc}{\mathcal}
\DeclareMathOperator{\Spin}{\mathsf{Spin}}
\DeclareMathOperator{\SO}{\mathsf{SO}}
\DeclareMathOperator{\Sp}{\mathsf{Sp}}
 \DeclareMathOperator{\SU}{\mathsf{SU}}
  \DeclareMathOperator{\PSU}{\mathsf{PSU}}
\DeclareMathOperator{\U}{\mathsf{U}}
\DeclareMathOperator{\V}{\mathsf{V}}
\DeclareMathOperator{\Ee}{\mathsf{E}}
\DeclareMathOperator{\G}{\mathsf{G}}
\DeclareMathOperator{\E}{\mathsf{E}}
\DeclareMathOperator{\Ss}{\bb{S}}
\DeclareMathOperator{\Gl}{\mathsf{GL}}
\theoremstyle{plain}
\newtheorem{lemma}{Lemma} [section]
\newtheorem{corol}[lemma] {Corollary}
\newtheorem{prop} [lemma]{Proposition}
\newtheorem{thm}{Theorem}
\theoremstyle{definition}
\newtheorem{definition}[lemma] {Definition}
\newtheorem{example}[lemma] {Example}
\newtheorem{remark}[lemma] {Remark}
\newtheorem*{remark*}{Remark}
\definecolor{dark}{rgb}{0.18,0.18,0.68}
\definecolor{mydark}{rgb}{0.78,0.08,0.08}
\definecolor{crew}{rgb}{0.2,0.5,0.2}
\definecolor{mmg}{rgb}{0.31,0.50,0.23}
\definecolor{dblue}{rgb}{0.01,0.01,0.44}
\definecolor{red}{rgb}{0.57,0.11,0.15}
\definecolor{cobalt}{RGB}{61,89,171}
\title[Homogeneous 8-manifolds admitting invariant $\mathrm{Spin}(7)$-structures]{Homogeneous 8-manifolds admitting invariant $\mathbf{Spin(7)}$-structures}
\author{Dmitri Alekseevsky}
\address{Institute for Information Transmission Problems, B. Karetny per. 19, 127051, Moscow, Russia  and
Faculty of Science, University of Hradec Kr\'alov\'e, Rokitanskeho 62, Hradec Kr\'alov\'e
50003, Czech Republic}
\email{dalekseevsky@iitp.ru}
\author{Ioannis Chrysikos}
\address{Faculty of Science, University of Hradec Kr\'alov\'e, Rokitanskeho 62, Hradec Kr\'alov\'e
50003, Czech Republic}
\email{ioannis.chrysikos@uhk.cz}
\author{Anna Fino}
\address{Dipartimento di Matematica ``G. Peano'', Universit\`a  degli Studi di Torino, Via Carlo Alberto 10, 10123 Torino, Italy}
\email{annamaria.fino@unito.it}
\author{Alberto Raffero}
\address{Dipartimento di Matematica ``G. Peano'', Universit\`a  degli Studi di Torino, Via Carlo Alberto 10, 10123 Torino, Italy}
\email{alberto.raffero@unito.it}
\begin{document}

\begin{abstract}
We study compact, simply connected, homogeneous 8-manifolds admitting invariant $\Spin(7)$-structures,
classifying all canonical presentations $G/H$ of such spaces, with $G$ simply connected.
For each presentation, we exhibit explicit examples of invariant $\Spin(7)$-structures and we describe their type, according to Fern\'andez classification.
Finally, we analyse the associated $\Spin(7)$-connection with torsion.
\end{abstract}

\maketitle


\section{Introduction}\label{intro}
A $\Spin(7)$-structure on an 8-manifold $M$ is characterized by the existence of a 4-form $\Phi$ which can be pointwise written as
\begin{eqnarray*}
\left.\Phi\right|_x	&=& e^{0123}+e^{0145}-e^{0167}+e^{0246}+e^{0257}+e^{0347}-e^{0356}\nonumber\\
				&  &+e^{4567}+e^{2367}-e^{2345}+e^{1357}+e^{1346}-e^{1247}+e^{1256}, \nonumber
\end{eqnarray*}
for some basis $(e^0,\ldots,e^7)$ of the cotangent space $T^*_xM$, where $e^{ijkl}$ denotes the wedge product of covectors $e^i\wedge e^j \wedge e^k \wedge e^l$.
Any such form is called {\it admissible}, and it gives rise to a Riemannian metric $g_\Phi$ and an orientation on $M$ by the inclusion
$\Spin(7)\subset\SO(8)$ (cf.~\cite{Joy96}). The existence of $\Spin(7)$-structures depends on the topology of the manifold \cite{Law}. In particular, $M$ has to be orientable and spin.

By \cite{Ferr}, the intrinsic torsion of a $\Spin(7)$-structure can be identified with the exterior derivative of the corresponding 4-form $\Phi$.
As a consequence, the Riemannian holonomy group $\mathrm{Hol}(g_\Phi)$ is a subgroup of $\Spin(7)$ if and only if $ {\rm d}\Phi=0$.
In such a case, the metric $g_\Phi$ is Ricci-flat, and the $\Spin(7)$-structure is said to be {\it torsion-free}.
More generally, the decomposition of the $\Spin(7)$-module $\Lambda^5((\bb{R}^8)^*)$ into irreducible submodules allows one to divide $\Spin(7)$-structures into four classes, which were first described in \cite{Ferr}.
Recently, a description of these classes in terms of spinorial equations has been obtained in \cite{Martin-Merchan}.

Since $\Phi$ is parallel with respect to the Levi Civita connection $\nabla^{g_{\Phi}}$ of $g_\Phi$ if and only if $\mathrm{Hol}(g_\Phi)\subseteq\Spin(7)$,
any other linear connection on $M$ preserving the $\Spin(7)$-structure $\Phi$ must necessarily have torsion.
By \cite{Iv}, on $(M,\Phi,g_{\Phi})$ there exists a unique connection with totally skew-symmetric torsion $T$ preserving both $\Phi$ and a nontrivial spinor.
It is given by $\nabla \coloneqq \nabla^{g_{\Phi}}  + \frac 12 T$.

The aim of this article is to study compact, simply connected 8-manifolds endowed with a $\Spin(7)$-structure and acted on (almost) effectively and transitively
by a compact connected Lie group $G$ of automorphisms.
Every such manifold admits a presentation of the form $(M=G/H,\Phi)$, where $H$ is a closed subgroup of $G$, and $\Phi$ is a $G$-invariant admissible 4-form.
From the algebraic point of view, $M=G/H$ is a compact, simply connected, (almost) effective homogeneous space whose isotropy action on the tangent space is equivalent to the action
of a closed subgroup of $\Spin(7)$ on $\bb{O}\cong\bb{R}^{8}$.

Recall that every compact, simply connected, homogeneous space $M$ admits a {\it canonical presentation}, that is, it can be written as $M=G/H$
with $G$ a compact, connected, simply connected, semisimple Lie group and $H$ a closed connected subgroup of $G$ (see e.g.~\cite{Bohm,Oni}).
All compact, simply connected, homogeneous 8-dimensional manifolds $G/H$ of a compact, connected, simply connected Lie group $G$  were classified in \cite{Klaus}.
Moreover, a classification of compact simply connected Riemannian symmetric spin manifolds was given in \cite{Cah}.
A topological examination of these spaces allows us to obtain the following.

\begin{thm}\label{MainTheoremA}\
\begin{enumerate}[a)]
\item\label{MainA} The canonical presentations of all compact, simply connected, non-symmetric almost effective homogeneous spaces admitting a $\Spin(7)$-structure are exhausted by \smallskip
\begin{enumerate}[1)]
\item $\displaystyle\frac{\SU(3)}{\{e\}}$;\smallskip
\item\label{CklmMain}  $C_{k, \ell, m}\coloneqq\displaystyle\frac{\SU(2)\times\SU(2)\times\SU(2)}{\U(1)_{k, \ell, m}}, \quad k\geq \ell\geq m \geq0,~k>0,~{\rm gcd}(k, \ell, m)=1$; \smallskip
\item\label{nKMain} $\displaystyle\frac{\SU(2)\times\SU(2)\times\SU(2)}{\Delta\SU(2)}\times\frac{\SU(2)}{\U(1)}$; \smallskip	
\item\label{CaEcMain}  $\displaystyle\frac{\SU(3)}{\SU(2)}\times\SU(2)$.
\end{enumerate}
As smooth manifolds, the spaces \ref{CklmMain}) and \ref{nKMain}) are diffeomorphic to $\Ss^{3}\times\Ss^{3}\times\Ss^{2}$,
while the space \ref{CaEcMain}) is diffeomorphic to $\Ss^{5}\times\Ss^{3}$.
\item\label{MainB}  The compact, simply connected, symmetric spaces admitting a $\Spin(7)$-structure are exhausted by $\SU(3)$, $\Ss^{3}\times\Ss^{3}\times\Ss^2$, $\Ss^{5}\times\Ss^{3}$,
$\bb{HP}^2$, ${\rm Gr}_{2}(\bb{C}^{4})$	 and the exceptional Wolf space $\displaystyle\frac{\G_2}{\SO(4)}$.
\end{enumerate}
\end{thm}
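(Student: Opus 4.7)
The underlying strategy is to combine two existing classifications with one representation-theoretic criterion. Fix an $\Ad_H$-invariant decomposition $\mathfrak{g}=\mathfrak{h}\oplus\mathfrak{m}$ together with an invariant inner product on $\mathfrak{m}$; then $M=G/H$ admits a $G$-invariant $\Spin(7)$-structure if and only if the linear isotropy image $\chi(H)\subset\SO(\mathfrak{m})\cong\SO(8)$ is contained, up to conjugation, in the standard subgroup $\Spin(7)\subset\SO(8)$. Since $\Spin(7)$ is the stabilizer in $\SO(8)$ of a unit spinor, this is equivalent to the existence of a nontrivial $H$-fixed vector in the real spin representation $\Delta_8^+\cong\mathbb{R}^8$, and in turn to the existence of an $H$-invariant admissible 4-form on $\mathfrak{m}$. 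The criterion is purely algebraic, depending only on the infinitesimal pair $(\mathfrak{g},\mathfrak{h})$ and on the embedding $\mathfrak{h}\hookrightarrow\mathfrak{g}$, and can therefore be tested entry by entry on a finite list of candidates.

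For part~\ref{MainA}, the plan is to start from Klaus's classification \cite{Klaus} of canonical presentations of compact, simply connected, almost effective 8-dimensional homogeneous spaces of a compact, connected, simply connected Lie group, and to filter out the entries that fail the criterion. First one removes the non-spin entries: since $G$ is simply connected and semisimple, this reduces to a cohomological condition directly readable from $\mathfrak{h}\hookrightarrow\mathfrak{g}$. For the remaining entries one computes the isotropy representation on $\mathfrak{m}\cong\mathbb{R}^8$ and checks whether its image in $\SO(8)$ is conjugate into $\Spin(7)$, using the short list of closed connected subgroups of $\Spin(7)$ that act on $\mathbb{R}^8$ ($\G_2$, $\SU(4)$, $\Sp(2)\cdot\Sp(1)$, $\SU(3)$ and their subgroups down to the tori). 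The entries that survive are precisely the group manifold $\SU(3)$ together with items \ref{CklmMain})--\ref{CaEcMain}), and the claimed diffeomorphism identifications with products of spheres then follow from standard fibre-bundle techniques together with $\SU(2)\cong\Ss^3$, $\SU(2)/\U(1)\cong\Ss^2$ and $\SU(3)/\SU(2)\cong\Ss^5$.

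For part~\ref{MainB}, we apply the same criterion to the (much shorter) Cahen--Gutt list \cite{Cah} of compact simply connected Riemannian symmetric spin manifolds of dimension~$8$. The reducible candidates are products of symmetric spin spheres of total dimension~$8$; only $\Ss^3\times\Ss^3\times\Ss^2$ and $\Ss^5\times\Ss^3$ have their isotropy inside a $\Spin(7)$-subgroup. Among the irreducible candidates the test is passed by the compact Lie group $\SU(3)$ with bi-invariant metric, by the quaternionic projective plane $\bb{HP}^2=\Sp(3)/\Sp(2)\Sp(1)$ through the classical inclusion $\Sp(2)\cdot\Sp(1)\subset\Spin(7)$, and by the Wolf space $\G_2/\SO(4)$ through $\SO(4)\subset\G_2\subset\Spin(7)$.

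The essential difficulty is concentrated in part~\ref{MainA}, and in particular in the treatment of the infinite family $C_{k,\ell,m}$: one must check uniformly in $(k,\ell,m)$ that the specific embedding $\U(1)_{k,\ell,m}\hookrightarrow\SU(2)^3$ produces an isotropy image lying in $\Spin(7)$. Since $\U(1)$ is one-dimensional and is contained in many subgroups of $\Spin(7)$, the cleanest way to settle this is to exhibit directly a $\U(1)_{k,\ell,m}$-invariant admissible 4-form for each triple, which is precisely the content of the explicit constructions performed in the subsequent sections of the paper.
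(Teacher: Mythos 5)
Your proposal proves a different theorem from the one stated. Theorem \ref{MainTheoremA} classifies the homogeneous spaces that admit \emph{some} $\Spin(7)$-structure, which is a purely topological property of the underlying $8$-manifold; your criterion --- isotropy image conjugate into $\Spin(7)\subset\SO(8)$, equivalently the existence of an $H$-invariant admissible $4$-form --- characterizes the existence of an \emph{invariant} $\Spin(7)$-structure, which is the content of Theorem \ref{MainTheoremB}, and the two answers genuinely differ. Applied literally, your filter destroys most of the list you are asked to produce: by Proposition \ref{mi1} the isotropy of $C_{k,\ell,m}$ lands in $\Spin(7)$ only when $k=\ell+m$, so the full family in item \ref{CklmMain}) would not survive; by Proposition \ref{ana} the space in item \ref{nKMain}) carries no invariant admissible $4$-form, so it would be wrongly discarded; and by Lemma \ref{symm} \emph{no} compact, simply connected symmetric space admits an invariant $\Spin(7)$-structure, so part \ref{MainB}) would come out empty. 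In particular the inclusions you invoke in part \ref{MainB}) do not exist at the level of isotropy representations: $\Sp(2)\cdot\Sp(1)$ and $\Spin(7)$ are distinct maximal subgroups of $\SO(8)$, and if the isotropy representation of $\bb{HP}^2$ or of $\G_2/\SO(4)$ factored through $\Spin(7)$, these spaces would carry an invariant, hence closed, hence torsion-free $\Spin(7)$-form, forcing their (isotropy irreducible, positively Einstein) metrics to be Ricci-flat. The final paragraph of your proposal compounds the error: no invariant admissible $4$-form exists on $C_{k,\ell,m}$ for general $(k,\ell,m)$, so the uniform verification you defer to the later sections of the paper is not available there.

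The argument the theorem actually requires is characteristic-class arithmetic, not representation theory. All candidates are orientable and spin, so by Proposition \ref{top} one only has to test $p_1^2(M)-4p_2(M)+8\chi(M)=0$ on the list of Table \ref{Table1} and on the Cahen--Gutt list of $8$-dimensional spin symmetric spaces. The products of spheres with at least one odd-dimensional factor (items \ref{CklmMain})--\ref{CaEcMain}) and the symmetric products $\Ss^3\times\Ss^3\times\Ss^2$, $\Ss^5\times\Ss^3$) are parallelizable by Kervaire's theorem and hence trivially admit $\Spin(7)$-structures, as does the Lie group $\SU(3)$; the remaining non-symmetric entries are quotients by subgroups of maximal rank, so $\chi\neq0$ while $p_1^2=p_2=0$ (extracted from the signature and $\hat{A}$-genus via \eqref{pot1} and \eqref{pot2}), and they fail the test; and the same computation run over the symmetric list leaves exactly the five spaces of part \ref{MainB}), with $\bb{HP}^2$ and $\G_2/\SO(4)$ satisfying $\chi=3$, $\sigma=1$, $\hat{A}=0$. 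Your criterion becomes the right tool only at the next stage, in the proof of Theorem \ref{MainTheoremB}.
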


The manifold $C_{k, \ell, m}$ appearing in the above theorem is a torus bundle over the homogeneous K\"ahler-Einstein manifold $(\SU(2)/\U(1))^{\times3}$,
and hence a non-K\"ahler C-space. Invariant Einstein metrics on it were discussed in \cite{Bohm, WZ2}.
When $m=0$, $C_{k, \ell, 0}$ is the direct product of $\Ss^3$ with the total space of a circle bundle over $\Ss^2\times\Ss^2$.
Furthermore, $C_{1,0,0}=\Spin(4)\times\frac{\SU(2)}{\U(1)}$.

On the other hand, the homogeneous space $(\SU(3)/\SU(2))\times\SU(2)$ is an example of a Calabi-Eckmann manifold.
This is a torus bundle over $\bb{CP}^{2}\times\bb{CP}^{1}$, and hence also a non-K\"ahler C-space.
By Jensen \cite{Jen}, the 5-sphere $\SU(3)/\SU(2)$ admits a unique invariant Einstein metric which coincides with the canonical metric.
Consequently,  the space $(\SU(3)/\SU(2))\times\SU(2)$ admits a unique invariant Einstein metric.

Using general properties of symmetric spaces, we see that there are no invariant $\Spin(7)$-structures on the manifolds  described  in part  \ref{MainB}) of Theorem \ref{MainTheoremA}
(cf.~Lemma \ref{symm}).
Combining this with a case-by-case analysis of the homogeneous spaces appearing in part \ref{MainA}) gives the following result.
\begin{thm}\label{MainTheoremB}
The canonical presentations of all compact, simply connected, almost effective homogeneous spaces admitting an invariant $\Spin(7)$-structure are exhausted by
$\frac{\SU(3)}{\{e\}}$, the infinite family $C_{k, \ell, m}$, for $k=\ell+m$, and the Calabi-Eckmann manifold $\frac{\SU(3)}{\SU(2)}\times\SU(2)$.
\end{thm}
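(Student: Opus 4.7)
The plan is to go down the list in Theorem \ref{MainTheoremA} and to decide, for each canonical presentation $G/H$, whether an $\Ad(H)$-invariant admissible 4-form exists on the reductive complement $\mathfrak{m}\cong\mathbb{R}^{8}$. After fixing an $H$-invariant inner product, this is equivalent to requiring that the image of the isotropy representation $\iota\colon H\to \SO(\mathfrak{m})\cong \SO(8)$ be conjugate into the subgroup $\Spin(7)\subset \SO(8)$, so the classification reduces to a group-theoretic check in each case. The symmetric spaces of part \ref{MainB}) are excluded by the already cited Lemma \ref{symm}, and what remains is the non-symmetric list of part \ref{MainA}).

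Two entries yield invariant structures immediately. For $\SU(3)/\{e\}$ the isotropy is trivial, so every admissible $\Phi_{0}\in\Lambda^{4}\mathfrak{su}(3)^{*}$ extends to a left-invariant $\Spin(7)$-structure. For the Calabi--Eckmann manifold $(\SU(3)/\SU(2))\times \SU(2)$ the isotropy $\SU(2)$ acts on $\mathfrak{m}\cong \mathbb{C}^{2}\oplus \mathbb{R}\oplus \mathfrak{su}(2)$ as the standard representation on $\mathbb{C}^{2}$ and trivially on the 4-dimensional complement, which is precisely how a subgroup $\Sp(1)\subset \Sp(2)\subset \Spin(7)$ acting on only one factor of $\mathbb{H}^{2}\cong \mathbb{R}^{8}$ sits in $\Spin(7)$; hence the isotropy embeds into $\Spin(7)$, and an invariant admissible 4-form can be written down explicitly.

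For the family $C_{k,\ell,m}=\SU(2)^{3}/\U(1)_{k,\ell,m}$ I would decompose $\mathfrak{su}(2)^{\oplus 3}$ into its toral part and the three root planes and read off that $\U(1)_{k,\ell,m}$ acts on $\mathfrak{m}$ with rotation angles $(0,0,\pm k,\pm \ell,\pm m)$, the two zeros coming from the complement of $\mathbb{R}(kH_{1}+\ell H_{2}+mH_{3})$ inside the Cartan. Since the weights of the spin representation of $\Spin(7)$ are $\tfrac12(\pm\varepsilon_{1}\pm\varepsilon_{2}\pm\varepsilon_{3})$, a circle in $\SO(8)$ with rotation angles $(a_{1},a_{2},a_{3},a_{4})$ lifts into $\Spin(7)$ if and only if $\pm a_{1}\pm a_{2}\pm a_{3}\pm a_{4}=0$ for some choice of signs. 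Applied to our weights, this becomes $\pm k\pm \ell\pm m=0$, which under the standing assumption $k\geq \ell\geq m\geq 0$ and $k>0$ forces $k=\ell+m$.

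The main obstacle is to rule out the last candidate $M=(\SU(2)^{3}/\Delta\SU(2))\times \SU(2)/\U(1)$. Decomposing $\mathfrak{m}=\mathfrak{m}_{N}\oplus \mathfrak{m}_{F}$, where $\mathfrak{m}_{N}\cong 2V_{2}$ carries the adjoint action of $\Delta\SU(2)$ and $\mathfrak{m}_{F}\cong \mathbb{R}^{2}$ is the rotation representation of $\U(1)$, a Clebsch--Gordan count shows that the space of $\Delta\SU(2)\times \U(1)$-invariant 4-forms on $\mathfrak{m}$ is only 2-dimensional, spanned by $\omega^{2}$ and $\omega\wedge\vol_{\mathfrak{m}_{F}}$, where $\omega\in\Lambda^{2}\mathfrak{m}_{N}^{*}$ is the unique invariant 2-form on $\mathfrak{m}_{N}$ and is nondegenerate (symplectic). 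The clean way to rule out admissibility is to observe that any such $\Phi=a\omega^{2}+b\,\omega\wedge\vol_{\mathfrak{m}_{F}}=\omega\wedge(a\omega+b\,\vol_{\mathfrak{m}_{F}})$ is preserved by the block group $\Sp(\mathfrak{m}_{N},\omega)\times \mathrm{SL}(\mathfrak{m}_{F})$ of real dimension $21+3=24$; since the stabilizer in $\Gl(8,\mathbb{R})$ of any admissible 4-form is conjugate to $\Spin(7)$ and has dimension only $21$, no $\Phi$ in this 2-parameter family can be admissible, and the last case is eliminated.
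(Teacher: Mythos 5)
Your proposal is correct and follows the same overall strategy as the paper: reduce to the non-symmetric list of Theorem \ref{MainTheoremA} via Lemma \ref{symm}, then decide case by case whether the isotropy image lands in a conjugate of $\Spin(7)$. For $\SU(3)/\{e\}$, for the Calabi--Eckmann space (the paper uses the chain $\SU(2)\subset\SU(3)\subset\G_2\subset\Spin(7)$ where you use $\Sp(1)\subset\Sp(2)\subset\Spin(7)$, which is an equally valid embedding since the isotropy acts standardly on one $\bb{H}$-factor and trivially on the other), and for $C_{k,\ell,m}$ (where the paper matches the circle $-kE_{12}-\ell E_{34}-mE_{56}$ against an explicit Cartan subalgebra $\mathsf{t}^3\subset\fr{spin}(7)$, which is exactly your weight condition $\pm k\pm\ell\pm m\pm 0=0$ in matrix form), the arguments coincide in substance. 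The genuine divergence is in the exclusion of $(\SU(2)^{3}/\Delta\SU(2))\times(\SU(2)/\U(1))$: both you and the paper first compute that the invariant 4-forms form the 2-dimensional space spanned by $\omega^2$ and $\omega\wedge e^{78}$ (Lemma \ref{lemmaspace3invforms}), but the paper then applies the Karigiannis determinant criterion of \cite[Thm.~4.3.3]{Kar} with $u=e_1$ and checks that all entries $a_{ij}$ vanish, whereas you observe that every form in this family is stabilized by the $24$-dimensional block group $\Sp(\fr{m}_N,\omega)\times\mathrm{SL}(\fr{m}_F)$, which exceeds $\dim\Spin(7)=21$ and therefore rules out admissibility since the $\Gl(8,\bb{R})$-stabilizer of an admissible form is exactly a conjugate of $\Spin(7)$. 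Your stabilizer-dimension argument is coordinate-free, shorter, and immediately applies to the whole 2-parameter family at once; the paper's computation is more explicit and has the minor virtue of exhibiting directly that the would-be induced metric degenerates, but both are complete proofs of the same non-existence statement.
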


It is remarkable that  there are just a few examples of compact simply connected homogeneous spaces admitting invariant $\Spin(7)$-structures.
This is different from the case of $\G_2$-structures, where examples of this type are abundant (see \cite{Le,Rei},
and compare with the classification of compact almost effective homogeneous 7-manifolds given in \cite{ACT}).

An example of invariant $\Spin(7)$-structure inducing the bi-invariant metric on the homogeneous space $\SU(3)/\{e\}$ was described in \cite{Ferr}.
For the spaces $C_{k, \ell,m}$, with $k>\ell > m > 0$, and $(\SU(3)/\SU(2))\times\SU(2)$, we describe the invariant Riemannian metrics and the invariant differential forms.
This allows us to obtain a $5$-parameter family of invariant $\Spin(7)$-structures of mixed type on both of them.
In particular, for both spaces we show that there exists an invariant $\Spin(7)$-structure $\Phi$ inducing the normal metric and whose associated $\Spin(7)$-connection $\nabla$
coincides with the canonical connection corresponding to the naturally reductive structure induced by $g_{\Phi}$. From this, it follows that  $\nabla$  has parallel torsion.

The paper is organized as follows.
In Section \ref{prel}, we recall some basic facts on $\Spin(7)$-structures. Homogeneous 8-manifolds with an invariant $\Spin(7)$-structure are discussed in Section \ref{homspin7}.
In Section \ref{ss}, we review the main properties of simply connected homogeneous spaces.
The main theorems \ref{MainTheoremA} and \ref{MainTheoremB} are proved in Sections \ref{proofTHEMA} and \ref{proofB}, respectively.
In particular, the infinite family $C_{k, \ell, m}$ is studied in Section \ref{cklm},  the Calabi-Eckman manifold $(\SU(3)/\SU(2))\times\SU(2)$ is described in Section \ref{ss5ss3}
and $(\SU(2) \times \SU(2) \times \SU(2))/\Delta(\SU(2)))\times(\SU(2)/\U(1))$  is analysed in Section \ref{Space4}.
Explicit examples of invariant $\Spin(7)$-structures on $C_{\ell+m, \ell, m}$, with $\ell\geq m>0$, and on the Calabi-Eckmann manifold are given is Section \ref{genericexa},
where we also study the corresponding invariant $\Spin(7)$-connection with torsion.
Finally, in Appendix \ref{last} we discuss the classification of all non-symmetric homogeneous presentations of $\Ss^3\times\Ss^3\times\Ss^2$.

We emphasize that the results of this paper are also useful to study compact 8-manifolds admitting other types of special structures, e.g.~invariant $\PSU(3)$-structures.
This will be discussed in a forthcoming work.


\section{Preliminaries}\label{prel}

We begin recalling the main properties of 8-manifolds whose frame bundle admits a reduction to the Lie group $\Spin(7)\subset \SO(8)$.
For more details, we refer the reader to \cite{Br1,Iv, Joy96, Law}.

Consider the vector space $\bb{R}^8$, denote by $\{e_0,\ldots,e_7\}$ the canonical basis, and by $\{e^0,\ldots,e^7\}$ its dual basis.
The group $\Spin(7)$ can be defined as the stabilizer in $\Gl(8,\bb{R})$ of the following 4-form on $\bb{R}^8$:	
\begin{eqnarray}
\Phi_{\scriptscriptstyle0}	&=&	e^{0123}+e^{0145}-e^{0167}+e^{0246}+e^{0257}+e^{0347}-e^{0356}\nonumber\\
					&  &+e^{4567}+e^{2367}-e^{2345}+e^{1357}+e^{1346}-e^{1247}+e^{1256}. \label{4f}
\end{eqnarray}
$\Spin(7)$ is a compact, connected, simply connected Lie group of dimension 21.
It is a subgroup of $\SO(8)$, as it preserves both the Euclidean inner product $g_{\scriptscriptstyle0} = \sum_{i=1}^8(e^i)^2$ on $\bb{R}^8$ and the volume form
$\frac{1}{14}\Phi_{\scriptscriptstyle0}\wedge\Phi_{\scriptscriptstyle0} = e^{01234567}$. Moreover, its center is $\bb{Z}_2=\left\{\pm \mathrm{Id}_{\bb{R}^8} \right\}$ and  $\Spin(7)/\bb{Z}_2\cong \SO(7)$
(see \cite[Thm.~4]{Br1} for a proof).

A $\Spin(7)$-structure on a $8$-dimensional manifold $M$ is a reduction of the structure group of its frame bundle from $\Gl(8,\bb{R})$ to $\Spin(7)$.
As $\Spin(7)$ is the stabilizer of the 4-form $\Phi_{\scriptscriptstyle0}$, such a reduction is characterized by the existence of a globally defined 4-form $\Phi\in\Omega^4(M)$
which can be pointwise identified with $\Phi_{\scriptscriptstyle0}$ by means of an isomorphism $u:T_xM\rightarrow \bb{R}^8$.
Any such form is called {\it admissible}, and it gives rise to a Riemannian metric $g_\Phi$ and to an orientation $dV_\Phi$ on $M$ by the inclusion $\Spin(7)\subset \SO(8)$.
We denote the Hodge operator associated with this metric and orientation by $\star$. Notice that $\Phi$ is self-dual, i.e., $\star\Phi=\Phi$.
An explicit description of the metric $g_\Phi$ in terms of the 4-form $\Phi$ can be found, for instance, in \cite[Sect.~4.3]{Kar}.

\begin{remark}
By dimension counting, the $\Gl(8,\bb{R})$-orbit of $\Phi_{\scriptscriptstyle0}$ is not open in $\Lambda^4((\bb{R}^8)^*)$.
Consequently, an admissible 4-form $\Phi$ is not {\it stable} in the sense of Hitchin \cite{Hit}.
This differs significantly from the case of $\G_2$-structures on 7-manifolds, which are defined by stable 3-forms satisfying a suitable positivity condition.
In eight dimensions, stability occurs for 3- and 5-forms, and the corresponding geometric structures are related to the group $\PSU(3)$.
\end{remark}

Since $\Spin(7)$ is both connected and simply connected, a connected 8-manifold $M$ admitting a $\Spin(7)$-structure must be orientable and spin (with a preferred spin structure and orientation).
These conditions are equivalent to the vanishing of the first and second Stiefel-Whitney classes of $M.$
However, not every 8-dimensional Riemannian spin manifold admits $\Spin(7)$-structures. More precisely, this is a topological issue, which can be characterized as follows.
\begin{prop}[\cite{Law}]\label{top}
An 8-dimensional orientable spin manifold $M$ admits $\Spin(7)$-structures if and only if, for an appropriate choice of orientation,
the following equation involving the Pontryagin classes $p_{1}(M)$, $p_{2}(M)$ and the Euler characteristic $\chi(M)$ of $M$ holds
\[
p_{1}^{2}(M)-4p_{2}(M)+8\chi(M) = 0
\]
\end{prop}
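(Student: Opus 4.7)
The plan is to translate the existence of a $\Spin(7)$-structure into an obstruction-theoretic problem on a real rank-$8$ vector bundle over $M$, and then to evaluate the obstruction explicitly. Since $M$ is orientable and spin, its $\SO(8)$-frame bundle lifts to a principal $\Spin(8)$-bundle $P_{\Spin(8)}(M)$. Recalling that $\Spin(7) \subset \Spin(8)$ arises as the stabilizer of a unit vector in the real $8$-dimensional half-spin representation $\Delta^+$ of $\Spin(8)$, one has $\Spin(8)/\Spin(7) \cong \Ss^7$. Consequently, a $\Spin(7)$-structure on $M$ corresponds bijectively to a reduction of the structure group of $P_{\Spin(8)}(M)$ to $\Spin(7)$, which in turn amounts to a nowhere-vanishing section of the associated real oriented rank-$8$ half-spin bundle $\mc{S}^+(M)\coloneqq P_{\Spin(8)}(M)\times_{\Spin(8)}\Delta^+$. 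Replacing $\Delta^+$ by $\Delta^-$ produces the companion bundle $\mc{S}^-(M)$, corresponding to the other choice of chirality.

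By standard obstruction theory, the primary (and, for dimensional reasons, the only) obstruction to the existence of a nowhere-zero section of an oriented real rank-$8$ vector bundle over an $8$-dimensional CW complex is its Euler class in $H^8(M;\bb{Z})$. Hence, once an orientation and a chirality are fixed, $M$ admits a $\Spin(7)$-structure if and only if $e(\mc{S}^+(M))=0$, and the clause \emph{``for an appropriate choice of orientation''} in the statement reflects the ambiguity in these choices.

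The remaining step is the computation of $e(\mc{S}^+(M))$ via the splitting principle. Writing $p_1(M)=\sum_i x_i^2$, $p_2(M)=\sum_{i<j}x_i^2 x_j^2$ and $e(TM)=x_1 x_2 x_3 x_4$ in terms of the formal Pontryagin roots of $TM$, the weights of $\Delta^+$ are $\frac12(\epsilon_1 x_1+\epsilon_2 x_2+\epsilon_3 x_3+\epsilon_4 x_4)$ with $\prod_i\epsilon_i=+1$. Since these $8$ weights come in $\pm$-pairs, $\mc{S}^+(M)$ has $4$ formal Pontryagin roots $y_1,\ldots,y_4$, so that $e(\mc{S}^+(M))=y_1 y_2 y_3 y_4$; a direct expansion, using $p_1^2=\sum_i x_i^4+2p_2$, yields
\[
e(\mc{S}^+(M))=\tfrac{1}{16}\bigl(p_1^2(M)-4p_2(M)+8\,e(TM)\bigr).
\]
Evaluating on the fundamental class, the vanishing of $e(\mc{S}^+(M))$ is precisely the integer equation $p_1^2(M)-4p_2(M)+8\chi(M)=0$, while replacing $\mc{S}^+(M)$ with $\mc{S}^-(M)$ (equivalently, reversing the orientation of $M$) only flips the sign of the last term. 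The main obstacle is this last Euler class computation: one must carefully identify the four (rather than eight) formal Pontryagin roots of the real rank-$8$ bundle $\mc{S}^+(M)$ and then carry out the symmetric function manipulation that collapses $y_1 y_2 y_3 y_4$ into a clean polynomial in $p_1(M)$, $p_2(M)$, and $e(TM)$.
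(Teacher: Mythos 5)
Your argument is correct: the paper states this result as a citation to \cite{Law} without giving a proof, and your reconstruction --- identifying $\Spin(7)$-reductions with unit sections of the positive half-spinor bundle via triality, reducing to the vanishing of its Euler class by obstruction theory over an $8$-complex, and computing $e(\mc{S}^{\pm})=\tfrac{1}{16}\bigl(p_1^2-4p_2\pm 8e(TM)\bigr)$ from the weights $\tfrac12(\pm x_1\pm x_2\pm x_3\pm x_4)$ --- is precisely the standard proof in that reference. The symmetric-function computation checks out, so nothing further is needed.
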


The intrinsic torsion of a $\Spin(7)$-structure can be identified with the covariant derivative of the defining 4-form $\Phi$ with respect to the Levi Civita connection $\nabla^{g_\Phi}$ of $g_\Phi$.
When $\nabla^{g_\Phi}\Phi=0$, the intrinsic torsion vanishes identically, the holonomy group of $g_\Phi$ is a subgroup of $\Spin(7)$ and $g_\Phi$ is Ricci-flat.
In such a case, the $\Spin(7)$-structure is said to be {\it torsion-free} or {\it parallel}.
By \cite{Ferr}, the intrinsic torsion $\nabla^{g_\Phi}\Phi$ can be also identified with the 5-form $ {\rm d}\Phi$.
Moreover, the $\Spin(7)$-module $\Lambda^5((\bb{R}^8)^*)$ splits into the direct sum of two irreducible submodules,
say $\Lambda^5((\bb{R}^8)^*) \cong \mc{W}_1 \oplus \mc{W}_2$,
with $\dim(\mc{W}_1) = 48$, $\dim(\mc{W}_2) = 8$.
This allows one to divide $\Spin(7)$-structures into four classes, which are completely characterized by ${\rm d}\Phi$.
Besides the class of parallel $\Spin(7)$-structures, corresponding to the condition ${\rm d}\Phi=0$, the following possibilities occur
\begin{enumerate}[$\bullet$]
\item class $\mc{W}_1$: {\it balanced} $\Spin(7)$-structures, characterized by the condition $\star{\rm d}\Phi\wedge \Phi=0$;
\item class $\mc{W}_2$: {\it locally conformal parallel} $\Spin(7)$-structures, characterized by the condition $ {\rm d}\Phi = \vartheta\wedge\Phi$;
\item class $\mc{W}_1\oplus \mc{W}_2$: $\Spin(7)$-structures of {\it mixed type}.
\end{enumerate}
The 1-form $\vartheta$ is given by
\[
\vartheta = -\frac{1}{7}\star(\star{\rm d}\Phi\wedge \Phi)=\frac{1}{7}\star(\delta\Phi\wedge\Phi),
\]
and it is called the {\it Lee form} of the $\Spin(7)$-structure.
In particular, $\Phi$ is balanced if and only if $\vartheta=0$, while ${\rm d}\vartheta=0$ whenever the $\Spin(7)$-structure is locally conformal parallel (l.c.p.~for short), see e.g.~\cite[Lemma 4.5.2]{Kar}.

Finally, according to \cite[Thm.~1.1]{Iv}, any 8-dimensional manifold $M$ endowed with a $\Spin(7)$-structure $\Phi$ admits a unique metric connection  $\nabla$ with totally skew-symmetric torsion $T$,
satisfying $\nabla\Phi=0$.
It is given by $\nabla \coloneqq \nabla^{g_\Phi}+\frac{1}{2}T$, where $T=-\delta{\Phi}-\frac{7}{6}\star(\vartheta\wedge{\Phi})$, and it is called the {\it  characteristic connection} of $(M,\Phi)$.


\section{Invariant $\Spin(7)$-structures on homogeneous spaces}\label{homspin7}
We now focus on homogeneous spaces. We start with the following definition.
\begin{definition}\label{HomSpin}
A $\Spin(7)$-structure $\Phi$ on an  8-dimensional manifold $M$ is called {\it homogeneous} or {\it invariant} if there exists a connected Lie group $G$ acting transitively and almost effectively on $M$,
preserving the 4-form $\Phi$.
\end{definition}

In this case, $M$ is $G$-equivariantly diffeomorphic to the homogeneous space $G/H$, where $H$ is the  (compact) stability group of a fixed point $o\in M,$
and $\Phi$ is a $G$-invariant 4-form on $G/H$ with pointwise stabilizer isomorphic to $\Spin(7)$.
Equivalently, the isotropy subgroup $\chi(H)\subset \Gl(T_oM)$ is a subgroup of $\Spin(7)$, where $\chi:H\rightarrow\Gl(T_oM)$ denotes the isotropy representation of $M=G/H$.
Conversely, a homogeneous 8-manifold $M=G/H$ with $\chi(H)\subseteq\Spin(7)$ admits invariant $\Spin(7)$-structures.

As we are interested in compact examples, from now on we assume that $G$ is compact.
Then $H$ is compact as well, and the Lie algebra $\mathfrak{g}$ of $G$ admits a reductive decomposition $\mathfrak{g}=\mathfrak{h}\oplus\mathfrak{m}$,
where $\mathfrak{h}$ is the Lie algebra of $H$ and $\mathfrak{m}$ is an $\mathrm{Ad}(H)$-invariant subspace of $\mathfrak{g}$.
Moreover, we can identify $\mathfrak{m}$  with the tangent space $T_{o}M$ and
the $G$-invariant 4-form $\Phi$ on $G/H$ with an $\mathrm{Ad}(H)$-invariant 4-form on $\mathfrak{m}$, which we shall denote by the same letter.
Since the $G$-action on $M=G/H$ is almost effective,
the isotropy representation $\chi_* : \fr{h} \to \fr{gl}(\mathfrak{m})$ is injective, and we can identify the subalgebra $\fr{h}$ with the isotropy subalgebra $\chi_{*}(\fr{h})$ of
the Lie algebra $\fr{spin}(7)\subset \fr{gl}(\mathfrak{m})$.
Notice that the following constraints must hold
\[
\mathrm{dim}(\mathfrak{g}) = \mathrm{dim}(\mathfrak{h}) + 8,\quad   \rk\fr{h}\leq 3.
\]

The well-known interplay between $\G_2$- and $\Spin(7)$-structures (see e.g.~\cite{Cab95})
implies that every invariant $\G_2$-structure on a compact homogeneous 7-manifold $N=L/K$ gives rise to an invariant $\Spin(7)$-structure
on $M=L/K\times\U(1)$ and, conversely, every invariant $\Spin(7)$-structure on the 8-manifold $M=L/K\times\U(1)$ induces an invariant $\G_2$-structure on $N=L/K$.
Consequently, in this case the complete list of homogeneous manifolds with an invariant $\Spin(7)$-structure can be obtained from the results of \cite{Le,Rei}.

In the next sections, we will deal with the classification of the canonical presentations of compact, simply connected, almost effective homogeneous 8-manifolds
that admit invariant $\Spin(7)$-structures.
The strategy to study this problem is the following.
First, we consider all  compact, simply connected, almost effective, homogeneous 8-manifolds with their {\it canonical presentation}, and we determine those satisfying the characterization
of Proposition \ref{top}. This gives the list of the spaces admitting $\Spin(7)$-structures.
Then, for each space we investigate whether there exists an invariant admissible 4-form.

We conclude this section with some remarks.
\begin{lemma}\label{symm}
A compact, simply connected, Riemannian symmetric space cannot admit any invariant $\Spin(7)$-structure.
\end{lemma}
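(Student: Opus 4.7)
The plan is to argue that the existence of such a structure would force the underlying symmetric metric to be Ricci-flat, which is impossible for a compact simply connected symmetric space of positive dimension.

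Suppose, for contradiction, that $M = G/H$ is a compact simply connected Riemannian symmetric space admitting an invariant $\Spin(7)$-structure $\Phi$, with reductive decomposition $\mathfrak{g}=\mathfrak{h}\oplus\mathfrak{m}$ satisfying $[\mathfrak{m},\mathfrak{m}]\subset\mathfrak{h}$. The first step is to recall that on a symmetric space the canonical connection $\nabla^{c}$ is torsion-free (its torsion on $\mathfrak{m}$ is $-[\,\cdot,\cdot]_{\mathfrak{m}}$, which vanishes by the symmetry condition) and parallelizes every $G$-invariant tensor. Since $G$ acts by isometries of $g_{\Phi}$, the associated invariant metric $g_{\Phi}$ is $\nabla^{c}$-parallel, and by uniqueness of the Levi-Civita connection we obtain $\nabla^{c}=\nabla^{g_{\Phi}}$. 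In particular, $\nabla^{g_{\Phi}}\Phi=0$, so $\Phi$ is a torsion-free $\Spin(7)$-structure and hence $\mathrm{Hol}(g_{\Phi})\subseteq\Spin(7)$. By the results recalled in Section \ref{prel}, this forces $g_{\Phi}$ to be Ricci-flat.

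The second step is to derive a contradiction from the Ricci-flatness. A compact simply connected Riemannian symmetric space admits a de Rham decomposition $M=M_{0}\times M_{+}\times M_{-}$ into a Euclidean factor and factors of compact and non-compact type. Compactness rules out a non-compact factor $M_{-}$, and simple connectedness rules out a nontrivial Euclidean factor $M_{0}$ (a compact simply connected flat manifold is a point). Therefore $M=M_{+}$ is of compact type, and its invariant Einstein metrics on the irreducible factors have strictly positive Einstein constants. Consequently the Ricci tensor of any $G$-invariant metric on $M$ of positive dimension is positive definite, contradicting the Ricci-flatness of $g_{\Phi}$.

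The main (minor) subtlety will be justifying carefully that $\nabla^{g_{\Phi}}$ coincides with the canonical connection for an arbitrary invariant metric $g_{\Phi}$, not only for the metric induced by the Killing form; this is exactly where the symmetry condition $[\mathfrak{m},\mathfrak{m}]\subset\mathfrak{h}$ is essential, since without it the canonical connection would only preserve invariant tensors but would not be torsion-free. Once this point is secured, the chain ``invariant $\Spin(7)$ $\Rightarrow$ torsion-free $\Spin(7)$ $\Rightarrow$ Ricci-flat symmetric $\Rightarrow$ trivial'' closes the argument.
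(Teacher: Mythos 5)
Your proof is correct, and its overall skeleton coincides with the paper's: establish that the structure is torsion-free, deduce Ricci-flatness, and contradict the fact that a compact simply connected symmetric space splits as a product of irreducible compact-type factors, each Einstein with positive Einstein constant. The one place where you take a genuinely different route is the first step. The paper invokes the classical fact that all invariant differential forms on a Riemannian symmetric space are closed, so ${\rm d}\Phi=0$, and then uses Fern\'andez's identification of the intrinsic torsion of a $\Spin(7)$-structure with ${\rm d}\Phi$ to conclude that the structure is torsion-free. You instead argue directly with the canonical connection: since $[\fr{m},\fr{m}]\subset\fr{h}$ it is torsion-free, it parallelizes every $G$-invariant tensor, hence it equals $\nabla^{g_\Phi}$ for \emph{any} invariant metric, and so $\nabla^{g_\Phi}\Phi=0$. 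Your version is slightly more self-contained (it bypasses the $\Spin(7)$-specific equivalence between ${\rm d}\Phi=0$ and $\nabla^{g_\Phi}\Phi=0$) and makes explicit why the argument applies to the metric $g_\Phi$ induced by $\Phi$ rather than only to the given symmetric metric; the paper's version is shorter because it reuses the torsion classification already set up in Section \ref{prel}. The second halves of the two arguments are essentially identical, with your de Rham decomposition spelling out what the paper condenses into a citation of Besse.
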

\begin{proof}
Since all invariant differential forms on a Riemannian symmetric space are closed (see e.g.~\cite[p.~250]{Wolf}), any invariant $\Spin(7)$-structure on a Riemannian symmetric space must be
torsion-free. In particular, the corresponding  invariant metric must be Ricci-flat.
However, every compact simply connected Riemannian symmetric space is a direct product of irreducible symmetric spaces of compact type, which are
Einstein with non-zero Einstein constant (cf.~\cite[10.83]{Bes}).
\end{proof}

More generally, since Ricci-flat homogeneous manifolds are flat \cite{AK75}, the class of compact connected homogeneous spaces admitting an invariant torsion-free $\Spin(7)$-structure
is exhausted by flat tori.


\section{Simply connected 8-dimensional homogeneous spaces}\label{ss}
Let $M$ be a compact, simply connected homogeneous space and let $G'$ be a connected Lie group acting transitively and almost effectively on it.
Starting from the corresponding presentation $M=G'/H'$ and using the results of \cite{Mon,Oni}, it is always possible to obtain a presentation of the form $M=G/H$,
where $G$ is a compact, connected, simply connected, semisimple Lie group and $H\subset G$ is a connected closed subgroup (see e.g.~\cite{Bohm1,Bohm} for more details).
This motivates the following.
\begin{definition}
Let $M$ be a compact, simply connected homogeneous space. A {\it canonical presentation} of $M$ is a presentation of the form $M=G/H$,
where $G$ is a compact, connected, simply connected, semisimple Lie group and $H\subset G$ is a connected closed subgroup.
\end{definition}
In what follows, we restrict our attention to such presentations.

\begin{prop}\label{bom} \textnormal{(\cite{Bohm1})}
For a compact, simply connected, homogeneous space $M$ with canonical presentation $M=G/H$ with $H$ semisimple, two possible cases occur:
\begin{enumerate}[(I)]
\item\label{BohmI}  $\rk N_{G}(H) = \rk H$;
\item\label{BohmII} $\rk N_{G}(H) >\rk H$.
\end{enumerate}
In the first case, $M=G/H$ is the direct product of indecomposable homogeneous spaces $G_i/H_i$ which also satisfy {\rm (\ref{BohmI})}, that is
\[
M=G/H=G_1/H_1\times\ldots\times G_k/H_k,
\]
for some $k\geq 1$, with $G_i$ compact connected, simply connected and semisimple and $H_{i}\subset G_{i}$ closed, for all $1\leq i\leq k$.
Such homogeneous spaces $G_i/H_i$ are called {\it prime}.

In  case {\rm (\ref{BohmII})},  $M=G/H$ is the total space of a principal torus bundle over a product of prime homogeneous spaces.
In particular, for any maximal torus ${\rm T}$ in a compact complement of $H$ in $N_{G}(H)$, $M=G/H $ is the total space of the principal torus bundle
\[
(H\cdot {\rm T})/H\to G/H \to G/(H\cdot {\rm T}),
\]
where  $H\cdot {\rm T}$ denotes  $(H\times {\rm T})/H\cap {\rm T}$.
Note that the base space $G/(H\cdot {\rm T})$ does not depend on the choice of ${\rm T}$ and $\rk N_{G}(H\cdot {\rm T})=\rk(H\cdot {\rm T})$.
\end{prop}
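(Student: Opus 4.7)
The plan is to treat the two cases separately, starting with the more constructive Case~(\ref{BohmII}). The key observation is that $N_{G}(H)/H$ acts on $G/H$ by right translations and, at the Lie algebra level, $\fr{n}_{\fr{g}}(\fr{h})/\fr{h}$ is a reductive Lie algebra (the quotient of a reductive algebra by an ideal). Hence it decomposes as a direct sum of a semisimple part and a central toral part. The hypothesis $\rk N_{G}(H) > \rk H$ ensures that a compact complement $L$ of $H$ in $N_{G}(H)^{0}$ carries a nontrivial maximal torus ${\rm T}$. Since ${\rm T} \subset N_{G}(H)$ normalizes $H$, the product $H\cdot{\rm T} = (H\times{\rm T})/(H\cap{\rm T})$ is a closed subgroup of $G$ with $H$ normal inside it, and the quotient $(H\cdot{\rm T})/H \cong {\rm T}/(H\cap{\rm T})$ is a torus.

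Next, I would invoke the standard fibration associated with the nested tower of closed subgroups $H \subset H\cdot{\rm T}\subset G$, which produces the fiber bundle
\[
(H\cdot{\rm T})/H \longrightarrow G/H \longrightarrow G/(H\cdot{\rm T}).
\]
Normality of $H$ in $H\cdot{\rm T}$ promotes this to a principal torus bundle. Independence of the base from the choice of ${\rm T}$ follows from the fact that any two maximal tori in a compact complement of $H$ in $N_{G}(H)^{0}$ are conjugate inside $N_{G}(H)^{0}$, so the subgroups $H\cdot{\rm T}$ are mutually conjugate and give the same quotient up to $G$-equivariant diffeomorphism. Maximality of ${\rm T}$ then forces $\rk N_{G}(H\cdot{\rm T}) = \rk(H\cdot{\rm T})$, i.e., the base lies in Case~(\ref{BohmI}): any further central torus in $\fr{n}_{\fr{g}}(\fr{h}\oplus\fr{t})/(\fr{h}\oplus\fr{t})$ would enlarge ${\rm T}$, contradicting its maximality.

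For Case~(\ref{BohmI}), I would argue by induction on $\dim G$, exploiting the decomposition $G = G_{1}\times\cdots\times G_{r}$ into simple factors of the compact simply connected semisimple group $G$. The rank equality $\rk N_{G}(H)=\rk H$ means that no further toral reduction is available, so the goal is to show that $H$ respects the simple factorization of $G$ up to a coarser grouping into indecomposable blocks $G_{i}/H_{i}$. One examines the projections of $\fr{h}$ onto the factors $\fr{g}_{i}$ and, using Onishchik's results together with Montgomery's theorem on transitive actions (both cited in the setup), one shows that simple connectedness of $G/H$ combined with the rank equality rules out genuinely twisted embeddings outside of such block decompositions. Each block itself satisfies the same rank equality (by additivity of ranks on products), and primality for the blocks of minimal dimension follows, completing the induction.

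I expect the main obstacle to lie in Case~(\ref{BohmI}): verifying that the rank equality prevents a nontrivial diagonal embedding of a simple factor of $H$ across several $G_{i}$ requires the finer structural input from Onishchik's classification of transitive actions on homogeneous spaces, rather than purely Lie-algebraic manipulations. Case~(\ref{BohmII}), by contrast, reduces to the direct construction of the torus ${\rm T}$ once the reductive structure of $\fr{n}_{\fr{g}}(\fr{h})/\fr{h}$ is exploited.
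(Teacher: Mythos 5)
First, a point of reference: the paper does not actually prove Proposition~\ref{bom} --- it is stated as a structural fact imported from B\"ohm and B\"ohm--Kerr \cite{Bohm1,Bohm}, so there is no in-paper argument to compare yours against. Judged on its own terms, your treatment of Case~(\ref{BohmII}) is essentially the standard construction and is sound: the tower $H\subset H\cdot{\rm T}\subset G$ gives the fibration, normality of $H$ in $H\cdot{\rm T}$ makes it a principal bundle with torus fibre, and connectedness of the fibre together with simple connectedness of $G/H$ forces the base to be simply connected, so Case~(\ref{BohmI}) applies to it and yields the ``product of primes'' base. One step is glossed over: to get $\rk N_G(H\cdot{\rm T})=\rk(H\cdot{\rm T})$ you say a further torus ``would enlarge ${\rm T}$'', but a torus normalizing $H\cdot{\rm T}$ need not normalize $H$. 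You should choose the complement of $\fr{h}\oplus\fr{t}$ in $\fr{n}_{\fr{g}}(\fr{h}\oplus\fr{t})$ as a complementary \emph{ideal} (possible since these are compact, hence reductive, Lie algebras); it then centralizes $\fr{h}\oplus\fr{t}$, so it lies in $\fr{n}_{\fr{g}}(\fr{h})$ and genuinely contradicts the maximality of $\fr{t}$.

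The genuine gap is Case~(\ref{BohmI}), which you leave as a plan rather than a proof, and the plan rests on a misconception. The rank equality does \emph{not} ``rule out genuinely twisted embeddings'': $\Delta\SU(2)\subset\SU(2)\times\SU(2)$ and, more generally, the Ledger--Obata isotropy $\Delta K\subset K\times K\times K$ satisfy $\rk N_G(H)=\rk H$ and are as twisted as possible --- they simply constitute a single indecomposable block. What must be shown is that $H$ splits as a product along a suitable grouping of the simple factors of $G$, and this does not require Onishchik's classification. Observe first that for connected $H$, Case~(\ref{BohmI}) is equivalent to $\fr{n}_{\fr{g}}(\fr{h})=\fr{h}$: the complementary ideal of $\fr{h}$ in the compact algebra $\fr{n}_{\fr{g}}(\fr{h})$ has rank $0$, hence vanishes. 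Group the simple ideals of $\fr{g}$ into blocks according to which of them receive nontrivial projections of a common simple ideal of $\fr{h}$; the semisimple part of $\fr{h}$ splits along blocks by construction, and a circle in $\fr{z}(\fr{h})$ projecting nontrivially to two distinct blocks would produce, via its block projections, a torus of strictly larger rank centralizing $\fr{h}$, contradicting $\fr{n}_{\fr{g}}(\fr{h})=\fr{h}$. Hence $\fr{h}=\bigoplus_i\fr{h}_i$ with $\fr{h}_i\subset\fr{g}_i$, so $G/H=\prod_i G_i/H_i$, and $\fr{n}_{\fr{g}}(\fr{h})=\bigoplus_i\fr{n}_{\fr{g}_i}(\fr{h}_i)$ shows each factor again satisfies~(\ref{BohmI}). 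As written, your proposal asserts the conclusion of this step rather than deriving it.
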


\begin{example}
Consider  the simply connected coset $(\G_2\times\SU(2))/(\SU(3)\times\U(1))$, where $\SU(3)$ is the maximal compact connected subgroup of $\G_2$, and $\U(1)\subset \SU(2)$ is a
maximal torus. It is easy to see that this space satisfies condition (\ref{BohmI}).
Hence, it is the direct product of the prime homogeneous spaces $\bb{S}_{\mathrm{irr}}^6=\G_2/\SU(3)$ and $\bb{CP}^1=\SU(2)/\U(1)$.
\end{example}

Proposition \ref{bom} allows us to distinguish two classes of compact, simply connected homogeneous spaces.
A particular example of (\ref{BohmII}) is the following (cf.~\cite[p.~80]{Klaus}).

\begin{lemma}[\cite{Klaus}]
Let $P$ be the total space of a ${\rm T}^{q-1}$-principal bundle over $(\Ss^{2})^{\times q}$, with $q\geq 2$.
If $P$ is simply connected, then it is diffeomorphic to $\Ss^{2}\times(\Ss^{3})^{\times (q-1)}$.
\end{lemma}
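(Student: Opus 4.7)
The plan is to realize every simply connected principal $T^{q-1}$-bundle over $(\Ss^2)^q$ as an explicit quotient of $(\Ss^3)^q$ by a free circle action, and to show this quotient is diffeomorphic to $\Ss^2\times(\Ss^3)^{q-1}$ by an induction on $q$. For the realization, the product Hopf bundle $(\Ss^3)^q\to(\Ss^2)^q$ is a principal $T^q$-bundle whose Chern class matrix is $I_q$; any primitive vector $\vec h=(h_1,\ldots,h_q)\in\bb Z^q$ then defines a primitive circle subgroup $H=\{(t^{h_1},\ldots,t^{h_q}):t\in\U(1)\}\subset T^q$, and the free quotient $(\Ss^3)^q/H$ is a principal $T^q/H\cong T^{q-1}$-bundle over $(\Ss^2)^q$. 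By the long exact homotopy sequence of $T^{q-1}\to P\to(\Ss^2)^q$, $P$ is simply connected exactly when the induced Chern class map $\bb Z^q\to\bb Z^{q-1}$ is surjective, equivalently when its kernel is generated by a primitive vector, so every simply connected $P$ is isomorphic as a principal bundle to $(\Ss^3)^q/H$ for some primitive $\vec h$.

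I would then argue by induction on $q$. For $q=2$, $P$ is a simply connected closed $5$-manifold with $H_2(P;\bb Z)\cong\bb Z$ by the Gysin sequence, and it is spin since $TP$ differs from $\pi^*T((\Ss^2)^2)$ by a trivial real line bundle; Smale's classification of simply connected closed spin $5$-manifolds with $H_2=\bb Z$ gives $P\cong\Ss^2\times\Ss^3$. In the inductive step, if some $h_k=0$ then $H$ acts trivially on the $k$-th factor and $(\Ss^3)^q/H\cong\Ss^3\times(\Ss^3)^{q-1}/H'$, and the inductive hypothesis handles the remaining factor. If some $h_k=\pm1$, reorder and possibly replace $\vec h$ by $-\vec h$ so that $h_q=1$; the left $\SU(2)^{q-1}$-action on the first $q-1$ factors commutes with the right $H$-action, descends to $P$, and a check of isotropy (the equation $t^{h_q}=1$ forces $t=1$) shows it is free. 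The quotient $P/\SU(2)^{q-1}$ is $\Ss^2$, realised as the Hopf quotient of the last factor, and since principal $\SU(2)^{q-1}$-bundles over $\Ss^2$ are classified by $\pi_1(\SU(2)^{q-1})=0$, the bundle is trivial and $P\cong\Ss^2\times(\Ss^3)^{q-1}$.

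The main obstacle is the residual case when $|h_j|\geq2$ for every $j$: here the $\SU(2)^{q-1}$-action acquires nontrivial cyclic isotropy of order $|h_q|$ and cannot be made free by any obvious rearrangement, and the diffeomorphism group of the base $(\Ss^2)^q$ acts only by signed permutations on $H^2$, which is insufficient to bring $\vec h$ into the form of the previous cases. To close this case I plan to run the Serre spectral sequence of the fibration $T^{q-1}\to P\to(\Ss^2)^q$ to verify that $H^*(P;\bb Z)$ matches $H^*(\Ss^2\times(\Ss^3)^{q-1})$ as a graded ring and that the Pontryagin classes and spin structure agree, and then to invoke the classification of closed simply connected spin manifolds with this cohomology in the relevant dimension. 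For the applications in this paper only $q\leq3$ is needed, so the total dimension is at most $8$ and such classification results are available; promoting the homological match to a smooth diffeomorphism for arbitrary coprime weights is the hardest and most delicate part of the argument.
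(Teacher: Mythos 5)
The paper itself offers no proof of this lemma: it is quoted from Klaus's Diplomarbeit, and the only trace of an argument in the text is the $q=2$ case, where $M_{k,\ell}$ is identified with $\Ss^{2}\times\Ss^{3}$ as a simply connected spin $5$-manifold with $H^{2}=\bb{Z}$ via Smale's classification --- exactly your base case. Your reduction of an arbitrary simply connected principal ${\rm T}^{q-1}$-bundle over $(\Ss^{2})^{\times q}$ to a quotient $(\Ss^{3})^{\times q}/H$ with $H=\Ss^{1}_{\vec h}$ and $\vec h$ primitive is correct (one small imprecision: surjectivity of the Chern matrix $\bb{Z}^{q}\to\bb{Z}^{q-1}$ implies, but is not equivalent to, primitivity of a generator of its kernel; only the forward implication is needed), and the cases where some $h_{k}=0$ or some $h_{k}=\pm1$ are handled correctly by splitting off a factor, respectively by the free $\SU(2)^{q-1}$-action and the triviality of $\SU(2)^{q-1}$-bundles over $\Ss^{2}$.

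The genuine gap is the residual case, and it is non-vacuous precisely in the range the paper needs: for $q=3$ the triple $(k,\ell,m)=(6,10,15)$ is primitive, has no entry in $\{0,\pm1\}$, and in fact no pair of its entries is coprime, so even the natural refinement of your trick (a single left $\SU(2)$ acting on the $j$-th factor, which descends to a free action on $P$ iff $\gcd(h_{i}:i\neq j)=1$) fails. Your proposed repair --- compute $H^{*}(P)$ via the Serre spectral sequence and then ``invoke the classification of closed simply connected spin manifolds with this cohomology in the relevant dimension'' --- does not go through: unlike dimension $5$, there is no available classification of simply connected spin $7$- or $8$-manifolds by cohomology ring and characteristic classes. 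For $q=3$ the total space is an $8$-manifold with $b_{2}=1$ which is not $2$-connected, so none of the standard highly connected classification theorems apply, and a cohomological match with $\Ss^{2}\times\Ss^{3}\times\Ss^{3}$ does not yield a diffeomorphism. Note also that even in the subcases where a single free $\SU(2)$-action exists, you would still have to prove that the resulting principal $\SU(2)$-bundle over $\Ss^{2}\times(\Ss^{3})^{\times(q-2)}$ is trivial, which is not automatic because $\pi_{4}(\SU(2))=\bb{Z}_{2}$ contributes a potential obstruction on the top cell. As written, therefore, the proof is complete only for $q=2$ and for weight vectors containing an entry in $\{0,\pm1\}$; the general primitive $\vec h$ requires either Klaus's original argument or genuinely heavier (surgery-theoretic) input.
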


Notice that for $q=2$ one obtains the circle bundle
\[
\Ss^{1}\to M_{k, \ell}=(\SU(2)\times\SU(2))/\U(1)_{k, \ell}\to\Ss^{2}\times\Ss^{2}.
\]
The space $M_{k, \ell}$ is a compact simply connected 5-dimensional spin manifold  with $H^{2}(M_{k, \ell}, \bb{Z})=\bb{Z}$, and hence diffeomorphic to the product of spheres $\Ss^{2}\times\Ss^{3}$.
For $k=\ell=1$,  the space $M_{1, 1}$ is diffeomorphic to $\SO(4)/\SO(2)$, and it can be viewed as the unit tangent bundle of $\Ss^{3}$, see also \cite[p.~6358]{Nik2}.
For $q=3$, we get  the  space
\[
C_{k, \ell , m} \coloneqq  (\SU(2)\times\SU(2)\times\SU(2))/\U(1)_{k, \ell ,m},
\]
where the embedding $\U(1)_{k, \ell, m} \subset \SU(2)\times\SU(2)\times\SU(2)$ will be specified later (see Section \ref{cklm}).
When ${\rm gcd}(k, \ell, m)=1$, the space $C_{k, \ell, m}$ is  a torus bundle over $\Ss^{2}\times\Ss^{2}\times\Ss^{2}$
\[
{\rm  T}^{2}\to (\SU(2)\times\SU(2)\times\SU(2))/\U(1)_{k, \ell ,m}\to \Ss^{2}\times\Ss^{2}\times\Ss^{2}\,,
\]
see Lemma \ref{my11} for a proof.
Thus, as a manifold it is diffeomorphic to $ \Ss^{3}\times\Ss^{3}\times\Ss^{2}$.

An inspection of the list of canonical presentations given in \cite{Bohm} (see also \cite[Table 1]{ArLa}) combined with the results of \cite[p.~81]{Klaus}, allows us to obtain
the canonical presentations for all compact, connected, simply connected, spin non-symmetric almost effective homogeneous 8-manifolds. They are described in Table \ref{Table1}.

More details on the cosets $(2)-(4)$ are given in Section \ref{proofB}, while further presentations of $\Ss^3\times\Ss^3\times\Ss^2$ are discussed in Appendix \ref{last}.
Notice that the spaces $(3)$ and $(6)-(8)$ are all simply-connected homogeneous nearly K\"ahler manifolds.

\begin{table}[ht]
\centering
\renewcommand\arraystretch{2.2}
\begin{tabular}{| c | l | l |}
\hline
	&	$M^8$								& 	canonical presentation $G/H$									 	\\ \hline
(1)	&	$\SU(3)$								&	$\displaystyle\frac{\SU(3)}{\{e\}}$													\\
(2)	&	$\Ss^{3}\times\Ss^{3}\times\Ss^{2}$			&	$C_{k, \ell, m}\coloneqq\displaystyle\frac{\SU(2)\times\SU(2)\times\SU(2)}{\U(1)_{k, \ell, m}},\quad k\geq\ell\geq m\geq0,~k>0,~{\rm gcd}(k, \ell, m)=1$	\\
(3)	&	$\Ss^{3}\times\Ss^{3}\times\Ss^{2}$			&	$\displaystyle\frac{\SU(2)\times\SU(2)\times\SU(2)}{\Delta\SU(2)}\times\frac{\SU(2)}{\U(1)}$										 \\
(4)	&	$\Ss^{5}_{\V\oplus\bb{R}}\times\Ss^{3}$					&	$\displaystyle\frac{\SU(3)}{\SU(2)}\times\SU(2)$ \\
(5)	&	$\Sp(2)$-full flag  						& 	$\displaystyle\frac{\Sp(2)}{{\rm T}^{2}_{\rm max}}$ \\
(6)	&	$\bb{F}^3\times\Ss^{2}$ 		&	$\displaystyle\frac{\SU(3)}{{\rm T}^{2}_{\rm max}}\times\displaystyle\frac{\SU(2)}{\U(1)}$\\
(7)	&	$\bb{CP}^{3}_{\fr{m}_1\oplus\fr{m}_2}\times\Ss^{2}$ 				&	$\displaystyle\frac{\Sp(2)}{\Sp(1)\times\U(1)}\times\displaystyle\frac{\SU(2)}{\U(1)}$ \\
(8)	&	$\Ss^{6}_{\rm irr}\times\Ss^{2}$ 					& $\displaystyle\frac{\G_2}{\SU(3)}\times\displaystyle\frac{\SU(2)}{\U(1)}$ \\   \hline
\end{tabular}
\vspace{0.1cm}
\caption{Canonical presentations of compact simply connected    spin almost effective non-symmetric homogeneous 8-manifolds.}\label{Table1}
\end{table}


\section{Proof of Theorem \ref{MainTheoremA}}\label{proofTHEMA}
In this section, we study the existence of $\Spin(7)$-structures on the spaces appearing in Table \ref{Table1} using the topological characterization of Proposition \ref{top}.
Since all of these manifolds are orientable and spin,  we only need to examine the constraint
\begin{equation}\label{toptest}
8\chi(M) = 4p_{2}(M)-p_{1}^{2}(M).
\end{equation}
 Recall that for a compact, connected, oriented 8-manifold $M$, the following identity holds (cf.~\cite{Sal})
\begin{equation}\label{pot1}
\sigma(M)=\frac{1}{45}\langle 7p_2(M)-p_{1}^{2}(M), [M]\rangle,
\end{equation}
where $\sigma(M)$ is the {\it signature} of $M,$ namely the signature of the quadratic form associated to
\[
Q : H^{4}(M, \bb{R})\times H^{4}(M, \bb{R})\to \bb{R}, \quad (\al, \be)\mapsto \langle \al\cup\be, [M]\rangle \coloneqq \int_{M}\al\wedge\be,
\]
and $[M]\in H_{8}(M, \bb{Z})$ is the fundamental homology class defined by the orientation.
Moreover, the {\it $\hat{A}$-genus} of $M$ is given by
\begin{equation}\label{pot2}
\hat{A}(M)=\frac{1}{5760}(7p_{1}^{2}(M)-4p_{2}(M)).
\end{equation}

Assume now that $M^8$ is also spin and let us denote by $\Sigma$ its spinor bundle and by $D^{g} : \Gamma(\Sigma)\to\Gamma(\Sigma)$ the Dirac operator associated to a Riemannian metric $g$ on it.
Then, $\Sigma$ decomposes as $\Sigma=\Sigma^{+}\oplus\Sigma^{-}$ and one can consider the index of the (half) Dirac operator $D^{g}_{+} : \Gamma(\Sigma^{+})\to\Gamma(\Sigma^{-})$,
which is given by  ${\rm ind}({D}_{+}^{g}) \coloneqq \dim\ker({D}_{+}^{g}) - \dim{\rm coker}({D}_{+}^{g})$.
By the Atiyah-Singer Index Theorem,  ${\rm ind}({D}_{+}^{g})$  coincides with the $\hat{A}$-genus, i.e., ${\rm ind}({D}_{+}^{g})=\langle \hat{A}(M), [M]\rangle$.
Moreover,  if $M$ admits a metric of positive scalar curvature, then $\hat{A}(M)=0$.

Let us now prove part \ref{MainA}) of Theorem \ref{MainTheoremA}.
\begin{prop}\label{topnew}
Among the manifolds described in Table \ref{Table1}, only those appearing in the first four rows admit $\Spin(7)$-structures.
\end{prop}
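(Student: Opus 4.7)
My plan is to apply the topological criterion of Proposition~\ref{top}, i.e.\ equation~\eqref{toptest}, to each of the eight entries of Table~\ref{Table1}. Because reorienting $M$ flips the sign of $\int_M(p_1^2 - 4p_2)$ while leaving $\chi(M)$ fixed, the criterion holds for some orientation if and only if $\bigl|\int_M(p_1^2 - 4p_2)\bigr| = 8\chi(M)$.

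For entries (1)--(4) each $M$ is either the compact Lie group $\SU(3)$ or, via the diffeomorphism stated in Theorem~\ref{MainTheoremA}, a product of spheres ($\Ss^3 \times \Ss^3 \times \Ss^2$ or $\Ss^5 \times \Ss^3$). Such manifolds are stably parallelizable, so $p_1(M) = p_2(M) = 0$; moreover each has an odd-dimensional sphere factor or is itself a positive-dimensional compact Lie group, forcing $\chi(M) = 0$. Hence \eqref{toptest} is trivially satisfied, so these admit $\Spin(7)$-structures.

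For entries (6)--(8) I would write $M = X \times \Ss^2$ with $X$ the compact $6$-manifold $\SU(3)/{\rm T}^2_{\rm max}$, $\Sp(2)/(\Sp(1)\times\U(1))$, or $\G_2/\SU(3)\cong\Ss^6$, respectively. Since $\dim X = 6$ we have $H^8(X) = 0$, so both $p_1(X)^2$ and $p_2(X)$ vanish for dimensional reasons; combined with $p_i(\Ss^2)=0$, the Whitney product formula gives $p_1^2(M)=0$ and $p_2(M)=0$. On the other hand $\chi(M) = 2\chi(X) \in \{12, 8, 4\}$ is nonzero, so \eqref{toptest} fails.

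The main obstacle is entry (5): $M = \Sp(2)/{\rm T}^2_{\rm max}$ is a complete flag manifold, so dimensional vanishing no longer kills its Pontryagin numbers. Here I would invoke Borel's presentation of $H^*(M;\bb{Q})$ as the coinvariant algebra of $W(C_2)$ acting on $\mathrm{Sym}(\fr{t}^*)$, together with the fact that the Chern roots of $T(G/T)$ are the positive roots of $C_2$. Writing $p_k = e_k(\alpha_1^2,\ldots,\alpha_n^2)$ with $\alpha_i$ the positive roots, each $p_k$ is $W(C_2)$-invariant (since the Weyl group preserves the multiset $\{\alpha^2 : \alpha>0\}$) of positive degree, hence lies in the defining ideal of the coinvariant algebra and vanishes in $H^*(M;\bb{Q})$. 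Therefore both Pontryagin numbers of $M$ vanish while $\chi(M) = |W(C_2)| = 8 \neq 0$, and \eqref{toptest} fails. This completes the case analysis.
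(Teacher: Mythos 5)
Your argument is correct, and it reaches the same conclusion as the paper while differing in two places. For entries (1)--(4) the paper does not merely verify the criterion of Proposition~\ref{top}: it invokes parallelizability (Kervaire for the sphere products, and the Lie group structure for $\SU(3)$) to produce a global coframe and hence an explicit admissible 4-form; your route through vanishing Pontryagin numbers and $\chi=0$ is equally valid for the existence statement, just less constructive. Entries (6)--(8) you treat essentially as the paper does (dimensional vanishing of $p_1^2$ and $p_2$ for $X^6\times\Ss^2$, nonzero Euler characteristic from equal rank). The real divergence is entry (5): the paper deduces $p_1^2[M]=p_2[M]=0$ for $\Sp(2)/{\rm T}^2_{\rm max}$ by combining $\sigma(M)=0$ and $\hat{A}(M)=0$ with the formulas \eqref{pot1} and \eqref{pot2}, whereas you argue via Borel's presentation of $H^*(G/T;\bb{Q})$ as the coinvariant algebra and the fact that $p_k=e_k(\alpha_1^2,\dots,\alpha_n^2)$ is a Weyl-invariant of positive degree. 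Your argument is a clean alternative that proves the stronger fact that \emph{all} rational Pontryagin classes of a full flag manifold vanish, and it sidesteps the need to justify $\sigma(M)=0$ (which the paper asserts without detail and which otherwise requires, e.g., the Hodge index theorem or the Betti numbers $1,2,2,2,1$). Both approaches are sound; the paper's is shorter given the index-theoretic identities it has already set up, while yours is more self-contained on the flag-manifold side.
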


\begin{proof}
By \cite[Sect.~7]{Ferr}, we know that $\SU(3)$ admits a $\Spin(7)$-structure inducing the bi-invariant metric. Let $M$ be one of the manifolds $(2)-(4)$ of Table \ref{Table1}.
As $M$ is a product of spheres with at least one of odd-dimension,  it is parallelizable by \cite{Kerv} (see also \cite{Parton}).
Consequently, it admits $\Spin(7)$-structures. Explicit examples of admissible 4-forms can be easily expressed in terms of a global coframe $\{e^0,\ldots,e^7\}$ providing the absolute parallelism.

We now prove that the remaining spaces of Table \ref{Table1} do not satisfy the relation \eqref{toptest}.
Indeed, apart from the full flag manifold $\Sp(2)/{\rm T}^2_{\rm max}$, they are all products of the form $M=X^{6}\times\Ss^{2}$, where $X^{6}$ is a 6-dimensional compact homogeneous nearly K\"ahler manifold.
Therefore, it is easy to see that they satisfy $p_1^{2}(M)=p_2(M)=0$, but their Euler characteristic is non-zero, since they are quotients of Lie groups of the same rank.
For the same reason, the full flag manifold $M=\Sp(2)/{\rm T}^2_{\rm max}$ has $\chi(M)\neq 0$.
Moreover, $\sigma(M)=\hat{A}(M)=0$, and by \eqref{pot1} and \eqref{pot2} we deduce that $p_1^{2}(M)=p_2(M)=0$.
Thus, none of these manifolds satisfies \eqref{toptest}.
\end{proof}

We now prove part \ref{MainB}) of Theorem \ref{MainTheoremA}.
\begin{prop}
The 8-dimensional compact, simply connected, symmetric spaces admitting $\Spin(7)$-structures are exhausted by  the Lie group $\SU(3)$,
the product of spheres $\Ss^{3}\times\Ss^3\times\Ss^{2}$ and $\Ss^{5}\times\Ss^{3}$, the quaternionic projective space $\bb{HP}^{2}$, the Grassmannian   ${\rm Gr}_{2}(\bb{C}^{4})$  and the exceptional Wolf space $\frac{\G_2}{\SO(4)}$.
\end{prop}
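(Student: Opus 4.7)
The plan is to combine the classification of compact simply connected Riemannian symmetric spaces of dimension $8$ with the topological criterion of Proposition \ref{top}. By the de Rham decomposition every such space is a Riemannian product of irreducible symmetric spaces of compact type, so first I would enumerate candidates by running over partitions of $8$ into parts $\ge 2$ and listing the irreducible compact-type symmetric spaces in each dimension. The irreducible $8$-dimensional ones are $\Ss^{8}$, $\bb{CP}^{4}$, $\bb{HP}^{2}$, the complex Grassmannian $\mathrm{Gr}_{2}(\bb{C}^{4})$, the Wolf space $\G_{2}/\SO(4)$ and the Lie group $\SU(3)$; the reducible candidates are the products of irreducibles of smaller dimension totalling $8$ (spheres, complex projective spaces, the Wu manifold $\SU(3)/\SO(3)$ and the relevant small Grassmannians).

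The next step is to eliminate every candidate with $w_{2}\ne 0$, via the K\"unneth-type formula $w_{2}(X\times Y)=w_{2}(X)\otimes 1+1\otimes w_{2}(Y)$ together with the standard facts $w_{2}(\bb{CP}^{2k})\ne 0$ and $w_{2}(\SU(3)/\SO(3))\ne 0$. For each surviving candidate I would then test the identity
\[
p_{1}^{2}-4p_{2}+8\chi=0
\]
of Proposition \ref{top}. Whenever at least one factor is an odd-dimensional sphere of dimension $\ge 3$, the product is stably parallelizable by Kervaire's theorem, so every Pontryagin number and $\chi$ vanish and the identity holds automatically; this settles $\SU(3)$, $\Ss^{5}\times\Ss^{3}$ and $\Ss^{3}\times\Ss^{3}\times\Ss^{2}$ in the positive. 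For a product whose factors are all of even dimension (e.g.~$\Ss^{8}$, $\Ss^{6}\times\Ss^{2}$, $\bb{CP}^{3}\times\Ss^{2}$, $\Ss^{4}\times\Ss^{4}$, $\Ss^{4}\times(\Ss^{2})^{2}$, $(\Ss^{2})^{4}$), a direct Whitney-formula computation gives $p_{1}^{2}=4p_{2}$ while $\chi>0$, and the identity fails.

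For the remaining irreducible $8$-dimensional candidates I would use the fact that every compact irreducible symmetric space of compact type carries an Einstein metric of positive scalar curvature; Lichnerowicz's vanishing theorem then forces $\hat A[M]=0$, i.e.\ $7p_{1}^{2}-4p_{2}=0$. Combined with the Hirzebruch signature formula $45\sigma = 7p_{2}-p_{1}^{2}$, this yields $p_{1}^{2}=4\sigma$ and $p_{2}=7\sigma$, so Proposition \ref{top}'s identity collapses to the numerical test $\chi=3\sigma$. Reading off $(\chi,\sigma)$ for each spin irreducible candidate from standard tables or root-system data then singles out $\bb{HP}^{2}$ and $\G_{2}/\SO(4)$ (both with $(\chi,\sigma)=(3,1)$) as those passing the test, while the other irreducibles are ruled out; combining with the reducible cases above produces the five spaces of the statement. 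The main obstacle is the correct determination of $\chi$ and $\sigma$ for the less familiar irreducible candidates (in particular $\G_{2}/\SO(4)$ and $\mathrm{Gr}_{2}(\bb{C}^{4})$), after which the verification of the topological identity is routine.
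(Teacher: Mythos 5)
Your overall strategy is the same as the paper's: restrict to the compact, simply connected, spin symmetric $8$-manifolds (the paper quotes the Cahen--Gutt list, you regenerate it from the de Rham decomposition) and test each against the identity \eqref{toptest} of Proposition \ref{top}. Your treatment of the reducible cases is fine, and your reduction of the test for irreducible spin candidates to $\chi=3\sigma$ --- via Lichnerowicz ($\hat A=0$ from positive scalar curvature) combined with \eqref{pot1} and \eqref{pot2} --- is correct; it is essentially what the paper does implicitly when it records $\chi$, $\sigma$ and $\hat A$ for each space.

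The gap is in the one step you defer to ``standard tables'': the claim that the test $\chi=3\sigma$ rules out every irreducible candidate other than $\bb{HP}^2$ and $\G_2/\SO(4)$ is false. The complex quadric $Q_4={\rm Gr}_2(\bb{C}^4)\cong{\rm Gr}_2^{+}(\bb{R}^6)$ is spin, irreducible, and has $\chi=6$ and $\sigma=2$ (the intersection form on $H^4$ is positive definite of rank $2$), so $\chi=3\sigma$ holds and your test does not exclude it. This is not an artifact of the reduction: computing directly from $c(TQ_4)=(1+x)^6(1+2x)^{-1}$ and $\int_{Q_4}x^4=2$ gives $\langle p_1^2,[M]\rangle=8$, $\langle p_2,[M]\rangle=14$, $\chi=6$, hence $p_1^2-4p_2+8\chi=8-56+48=0$, i.e.\ the quadric satisfies the criterion of Proposition \ref{top} with its complex orientation. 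As written, your argument would therefore produce a six-element list rather than the five of the statement, and you would need a genuinely different argument to exclude ${\rm Gr}_2(\bb{C}^4)$; note that the paper's own proof asserts without computation that this space fails \eqref{toptest}, an assertion your method, carried out honestly, calls into question. At a minimum you must perform the $(\chi,\sigma)$ check explicitly for every irreducible candidate instead of asserting its outcome.
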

\begin{proof}
Since an 8-manifold admitting $\Spin(7)$-structures is spin, we can focus on the list of compact simply connected spin symmetric spaces \cite{Cah}.
Up to a finite cover, we have to consider the following spaces
\begin{eqnarray*}
\SU(3)									&=&	(\SU(3)\times\SU(3))/{\Delta\SU(3)},	\\
\Ss^{2}\times\Ss^{2}\times\Ss^{2}\times\Ss^{2}		&=&	\left({\SU(2)}/{\U(1)}\right)^{\times4},	\\
\Ss^{3}\times\Ss^{3}\times\Ss^{2}				&=&	{\SO(4)}/{\SO(3)} \times  {\SO(4)}/{\SO(3)}\times {\SU(2)}/{\U(1)}, \\
\Ss^{4}\times\Ss^{2}\times\Ss^{2}				&=&	{\SO(5)}/{\SO(4)} \times {\SU(2)}/{\U(1)} \times {\SU(2)}/{\U(1)},  \\
\Ss^{4}\times\Ss^{4}							&=&	{\SO(5)}/{\SO(4)}\times {\SO(5)}/{\SO(4)},	\\
\Ss^{5}_{\rm sym}\times\Ss^{3}					&=&	\SO(6)/\SO(5) \times \SU(2) = \SO(6)/\SO(5) \times \SO(4)/\SO(3)\\
										&=&	\SU(4)/\Sp(2) \times \SU(2) = \SU(4)/\Sp(2) \times \SO(4)/\SO(3),\\
\Ss^{6}_{\rm sym}\times\Ss^{2}					&=&	\SO(7)/\SO(6) \times \SU(2)/\U(1),	\\
\Ss^{8}_{\rm sym}							&=&	\SO(9)/\SO(8),	\\
\bb{CP}^{3}_{\rm sym}\times\Ss^{2}				&=&	\SU(4)/\mathsf{S}(\U(3)\U(1)) \times \SU(2)/\U(1) = \SO(6)/\U(3) \times \SU(2)/\U(1), \\
{\rm Gr}_{2}(\bb{C}^{4})						&=&	{\rm Gr}_{2}^{+}(\bb{R}^{6}),	\\
\bb{HP}^{2}								&=&\Sp(3)/(\Sp(2)\times\Sp(1)),  \\
\bb{W}^8						                                  &=&\frac{\G_2}{\SO(4)}. 	
\end{eqnarray*}
\noindent Among these spaces, the half of them satisfy \eqref{toptest}, namely  $\SU(3)$, $\Ss^{3}\times\Ss^{3}\times\Ss^{2}$, $\Ss^{5}\times\Ss^{3}$, $\bb{HP}^{2}$, ${\rm Gr}_{2}(\bb{C}^{4})$ and $\bb{W}^8$.
For the first three of them, we have $\chi=\sigma=\hat{A}=0$.  As for the quaternionic projective space and the exceptional Wolf space, they both satisfy $\chi=3$, $\sigma=1$ and $\hat{A}=0$. Finally, the Grassmannian   ${\rm Gr}_{2}(\bb{C}^{4})$  is such that $\chi=6$, $\sigma=2$ and $\hat{A}=0$, hence $p_1^2=8$ and $p_2=14$.
\end{proof}


\section{Proof of Theorem \ref{MainTheoremB}}\label{proofB}
By Lemma \ref{symm}, we know that all compact, simply connected, symmetric spaces cannot admit invariant $\Spin(7)$-structures.
Moreover, an explicit example of an invariant $\Spin(7)$-structure inducing the bi-invariant metric on the homogeneous space $\SU(3)/\{e\}$ is constructed in \cite[Sect.~7]{Ferr}.
Thus, we only need to consider the remaining spaces appearing in part \ref{MainA}) of Theorem \ref{MainTheoremA}, namely
\[
C_{k, \ell, m}=\frac{\SU(2)\times\SU(2)\times\SU(2)}{\U(1)_{k, \ell, m}} ,  \quad
\frac{\SU(3)}{\SU(2)}  \times \SU(2), \quad \frac{\SU(2)\times\SU(2)\times\SU(2)}{\Delta\SU(2)}\times\frac{\SU(2)}{\U(1)}.
\]
In order to simplify the presentation, we examine each case separately.

\subsection{The infinite family $C_{k, \ell, m}$}\label{cklm}
Let $G \coloneqq \SU(2)\times\SU(2)\times\SU(2)$ and
\[
H \coloneqq \U(1)_{k, \ell, m} = \left\{(z^{k}, z^{\ell}, z^{m}) : z\in\U(1)\right\}.
\]
Denote the Lie algebra of  $G$ by $\fr{g} \coloneqq 3\fr{su}(2)=\fr{su}(2)\oplus\fr{su}(2)\oplus\fr{su}(2)$ and let $\fr{t}$ be the Lie algebra of a maximal torus of $G.$
The elements of $\fr{g}$ can be viewed as  $(6\times6)$ complex block matrices of the form ${\rm diag}(X, Y, Z)$, with $X, Y, Z\in\fr{su}(2)$.
Up to conjugation, any 1-dimensional subalgebra inside  $\fr{g}$ is described by a  homomorphism
\[
\rho_{k, \ell, m} : \fr{u}(1)\to \fr{g}, \quad  ix \mapsto \diag
\left(\begin{pmatrix}
ikx & 0 \\
0 & -ikx
\end{pmatrix},
\begin{pmatrix}
i\ell x & 0 \\
0 & -i\ell x
\end{pmatrix},
\begin{pmatrix}
imx & 0 \\
0 & -imx
\end{pmatrix}\right), \quad k, \ell, m\in\bb{R}.
\]
The image of $\rho_{k, \ell, m}$ is the Lie algebra of a closed connected subgroup of  $G$ if and only if $k, \ell, m\in\bb{Q}$.
Moreover, using the Weyl group and the outer automorphisms of $G$, it is always possible to reorder the elements of the triple $(k, \ell, m)$ in such a way that $k\geq \ell\geq m\geq 0$ and $k>0$
and assume that all $k, \ell, m$ are integers.
The stability algebra $\fr{h}$ is $\rho_{k, \ell, m}(\fr{u}(1))\coloneqq\fr{u}(1)_{k, \ell, m}$ and the Lie algebra $\fr{t}$ is given by
\[
\fr{t}=\left\{ {\rm diag}\left(
\begin{pmatrix}
ix & 0 \\
0 & -ix
 \end{pmatrix},
 \begin{pmatrix}
iy & 0 \\
0 & -iy
 \end{pmatrix},
\begin{pmatrix}
iz & 0 \\
0 & -iz
 \end{pmatrix} \right) : x, y, z\in\bb{R}\right\}\,.
\]
Since the stability group $H$ can be mapped by conjugation inside a maximal torus of $G$, any coset space of the form $(\SU(2))^{\times 3}/\U(1)$ is $G$-equivariantly diffeomorphic to  $C_{k, \ell, m}$,
where  $k, \ell, m$ are integers such that $k\geq \ell\geq m\geq 0$ and $k>0$.
Moreover, since we are interested in the simply connected case, we can assume that the  triple $(k, \ell, m)$ consists of  co-prime integers.

Denote by $\langle  \cdot , \cdot \rangle$ the bi-invariant metric on $G$ defined as $\langle A, B\rangle=-2\,{\rm tr}(AB)$
and let $\fr{m}=\fr{h}^{\perp}$ be the orthogonal complement of $\fr{h}$ inside $\fr{g}$ with respect to $\langle  \cdot , \cdot \rangle$.
Then, $[\fr{h}, \fr{m}]\subset\fr{m}$ and  $\fr{g}=\fr{h}\oplus\fr{m}$ is a reductive decomposition.
Thus, we can identify $\fr{m}$  with the tangent space $T_{o}C_{k, \ell, m}$ of $C_{k, \ell, m}$ at the identity coset $o\coloneqq eH$.
Let us consider the following orthogonal basis of $(\fr{g},\langle  \cdot , \cdot \rangle)$:
\begin{equation}\label{orth1}
\renewcommand\arraystretch{1.2}
\begin{array}{ll}
e_1 \coloneqq \diag(\sigma_1, 0, 0),  		&	e_2\coloneqq\diag(\sigma_2, 0, 0), \\
e_3\coloneqq\diag(0, \sigma_1, 0), 			&	e_4\coloneqq\diag(0, \sigma_2, 0), \\
e_5\coloneqq\diag(0, 0, \sigma_1),  			&	e_6\coloneqq\diag(0, 0, \sigma_2), \\
e_7\coloneqq\frac{1}{c_7}\diag(mk\sigma_3, m\ell\sigma_3, -(k^2+\ell^2)\sigma_3),  &	e_8\coloneqq\frac{1}{c_8}\diag(\ell\sigma_3, -k\sigma_3, 0), \\
e_9\coloneqq\diag(k\sigma_3, \ell\sigma_3, m\sigma_3), &  \\
\end{array}
\end{equation}
where
\[
\sigma_1\coloneqq \frac{1}{2}
\left ( \begin{array}{cc}
0 & i \\
i &  0
\end{array} \right ), \quad
\sigma_2 \coloneqq \frac{1}{2}
\left ( \begin{array}{cc}
0 & 1 \\
-1 &  0
\end{array} \right),  \quad
\sigma_3 \coloneqq \frac{1}{2}
\left ( \begin{array}{cc}
i & 0 \\
0 &  -i
\end{array} \right),
\]
$c_7\coloneqq\sqrt{k^{2}+\ell^{2}}\sqrt{k^{2}+\ell^{2}+m^{2}}$, and $c_8\coloneqq\sqrt{k^{2}+\ell^{2}}$.
Notice that $\langle e_i, e_i\rangle =1$, for all $i=1, \ldots, 8,$ and that $\langle e_9, e_9\rangle=k^{2}+\ell^{2}+m^{2}$.
Moreover,  $\fr{h}=\Span_{\bb{R}}\{e_9\}$ and $\fr{m}=\Span_{\bb{R}}\{e_1, \ldots,  e_8\}\cong\bb{R}^{8}$.

\begin{prop}\label{mi1}
The space $C_{k, \ell, m}=G/H$, with $k\geq \ell\geq  m\geq 0$, $k>0$ and  ${\rm gcd}(k, \ell, m)=1$, admits $G$-invariant $\Spin(7)$-structures  if and only if  $k-\ell-m=0$.
\end{prop}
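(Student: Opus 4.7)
I would prove the proposition in three steps: first I compute the isotropy representation, then I use a weight-theoretic argument for necessity, and finally I give an explicit $\SU(4)$-construction for sufficiency.

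\emph{Step 1: Isotropy representation.} Starting from the commutation relations $[\sigma_i,\sigma_j]=-\varepsilon_{ijk}\sigma_k$ of the generators in \eqref{orth1}, compute $\ad(e_9)\colon \mathfrak{m}\to\mathfrak{m}$. The result is that $\ad(e_9)$ is block-diagonal in the basis $(e_1,\ldots,e_8)$, preserving each plane $\Span\{e_{2a-1},e_{2a}\}$ and acting on them as skew-symmetric $2\times 2$ matrices with eigenvalues $\pm ik$, $\pm i\ell$, $\pm im$, and $0$, respectively. Identifying $\mathfrak{m}\cong\bb{C}^{4}$ via $z_a\coloneqq e_{2a-1}+ie_{2a}$, $a=1,\ldots,4$, the isotropy $H=\U(1)_{k,\ell,m}$ acts diagonally on $\bb{C}^{4}$ with weights $(k,\ell,m,0)$.

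\emph{Step 2: Necessity.} Suppose $C_{k,\ell,m}$ carries a $G$-invariant $\Spin(7)$-structure. Then, for the induced invariant metric, the isotropy image $\chi(H)\subset\SO(\mathfrak{m})\cong\SO(8)$ must be $\SO(8)$-conjugate to a subgroup of $\Spin(7)$. Being a torus, it lies, up to conjugation, in a maximal torus $T$ of $\Spin(7)$. Now $T$ acts on the $8$-dimensional spin representation with weights $\tfrac{1}{2}(\varepsilon_1 L_1+\varepsilon_2 L_2+\varepsilon_3 L_3)$, $\varepsilon_i\in\{\pm 1\}$, so any $\U(1)\subset T$ contributes four complex weights on $\bb{C}^{4}$ which, labelled appropriately, take the form
\[
w_1=\tfrac{1}{2}(p_1+p_2+p_3),\ w_2=\tfrac{1}{2}(p_1+p_2-p_3),\ w_3=\tfrac{1}{2}(p_1-p_2+p_3),\ w_4=\tfrac{1}{2}(p_1-p_2-p_3),
\]
for some $p_1,p_2,p_3\in\bb{R}$; in particular these four weights satisfy $w_1+w_4=w_2+w_3=p_1$, i.e.\ they admit a partition into two pairs with equal sum. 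Comparing this constraint with the multiset $\{k,\ell,m,0\}$ produced in Step~1, under $k\geq\ell\geq m\geq 0$, $k>0$, and accounting for the permutations and even sign changes allowed by the Weyl group of $\SO(8)$, a short case check shows the only nontrivial equal-sum partition is $\{k,0\}\sqcup\{\ell,m\}$, yielding the relation $k=\ell+m$.

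\emph{Step 3: Sufficiency.} Conversely, assume $k=\ell+m$. Introduce a modified complex structure $\tilde{J}$ on $\mathfrak{m}\cong\bb{R}^{8}$ by reversing the standard complex structure on the first plane: set $\tilde{z}_1\coloneqq e_1-ie_2$ and $\tilde{z}_a\coloneqq e_{2a-1}+ie_{2a}$ for $a=2,3,4$. Then $H$ acts on $(\mathfrak{m},\tilde{J})\cong\bb{C}^{4}$ with weights $(-k,\ell,m,0)$, whose sum vanishes; hence $\chi(H)\subset\SU(4)$. Via the standard embedding $\SU(4)\subset\Spin(7)$, the Calabi--Yau $4$-form
\[
\Phi\coloneqq \tfrac{1}{2}\,\tilde{\omega}\wedge\tilde{\omega}+\mathrm{Re}(\tilde{\Omega}),
\]
with $\tilde{\omega}$ the K\"ahler form of $(\mathfrak{m},\tilde{J})$ associated with the standard Euclidean metric and $\tilde{\Omega}\coloneqq d\tilde{z}_1\wedge d\tilde{z}_2\wedge d\tilde{z}_3\wedge d\tilde{z}_4$, is admissible and $\SU(4)$-invariant, hence in particular $H$-invariant. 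Extending it by left translation produces a $G$-invariant $\Spin(7)$-structure on $C_{k,\ell,m}$.

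\emph{Main obstacle.} The delicate point is Step~2, namely extracting from the spin representation of $\Spin(7)$ the precise linear constraint (the existence of an equal-sum pair partition) on the four weights of any $\U(1)\subset\Spin(7)\subset\SO(8)$, and then ruling out by a careful enumeration that any permutation and even sign flip of $(k,\ell,m,0)$ satisfies this constraint unless $k=\ell+m$.
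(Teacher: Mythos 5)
Your argument is correct, and its core --- reading off the weights $(k,\ell,m,0)$ of the isotropy circle on $\fr{m}$ and asking when such a circle can sit inside a maximal torus of $\Spin(7)\subset\SO(8)$ --- is the same as the paper's. The two proofs diverge in how this comparison is carried out. For necessity, the paper writes down an explicit matrix Cartan subalgebra $\mathsf{t}^{3}\subset\fr{spin}(7)\subset\fr{so}(8)$ whose four block rotation speeds are $(x,y,z,-(x-y-z))$; this encodes exactly your equal-sum-pair constraint $w_1+w_4=w_2+w_3$, and the paper then concludes by direct comparison, leaving the Weyl-group ambiguity implicit. You derive the same constraint from the weights $\tfrac12(\pm L_1\pm L_2\pm L_3)$ of the spin representation and make the enumeration over permutations and sign changes explicit; this is indeed the delicate point, and it is worth recording that the other pairings (e.g.\ $\{k,\ell\}\sqcup\{m,0\}$ with $k-\ell=m$) are not excluded but also collapse to $k=\ell+m$, as do the degenerate cases $m=0$, $k=\ell$. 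For sufficiency, the paper's proof stops at the containment of $\chi_{*}(\fr{h})$ in $\mathsf{t}^{3}$ and postpones exhibiting explicit invariant $4$-forms to Corollary \ref{4frminv}; your construction via $\SU(4)\subset\Spin(7)$ --- reversing the complex structure on one plane so that the weight sum $-k+\ell+m$ vanishes and taking the Cayley form $\tfrac12\,\tilde{\omega}\wedge\tilde{\omega}+\mathrm{Re}(\tilde{\Omega})$ --- is a self-contained and more conceptual alternative that explains where the invariant form comes from. Only a notational quibble: $\tilde{\Omega}$ should be built from the dual covectors $e^{2a-1}\mp i\,e^{2a}$ rather than from the vectors $e_{2a-1}\mp i\,e_{2a}$.
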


\begin{proof}
Let us denote by $\{E_{ij} : 1\leq i<j\leq 8\}$ the basis of $\fr{so}(8)$ given by the skew-symmetric matrices $E_{ij}$ with $-1$ in the $(i, j)$-entry, $1$ in the $(j, i)$-entry and zeroes elsewhere.
The orthogonal transformation $\chi_{*}(e_9)|_{\fr{m}}\in\fr{so}(\fr{m})$ is given by
$\chi_{*}(e_9)|_{\fr{m}}=-kE_{12}-\ell E_{34}-mE_{56}$, since
\begin{equation}\label{adjon1}
\begin{array}{llll}
\ad(e_9)e_1=-ke_2, 	&	\ad(e_9)e_2=ke_1,	&	\ad(e_9)e_3=-\ell e_4,	&	\ad(e_9)e_4=\ell e_3,  \\
\ad(e_9)e_5=-me_6, &	\ad(e_9)e_6=me_5,	&	\ad(e_9)e_7=0, 		& \ad(e_9)e_8=0.
\end{array}
\end{equation}
Thus,  the isotropy action of $\fr{h}$ on $\fr{m}$ yields the following subalgebra of $\fr{so}(8)=\fr{so}(\fr{m})$:
\[
\chi_{*}(\fr{h})=\ad(\fr{h})|_{\fr{m}} =\left\{\left( \:
\begin{array}{*{8}{c}}
 \cline{1-2}
  \multicolumn{1}{|c}{0}  &   \multicolumn{1}{c|}{kx}             &                                      &                                  &                    &  & &  \\
  \multicolumn{1}{|c}{-kx}         &    \multicolumn{1}{c|}{0}    &                                      &                                   &                      &  & &    \\
    \cline{1-2}
  \cline{3-4}
                                &                                     &  \multicolumn{1}{|c}{0}        &   \multicolumn{1}{c|}{\ell x}            &                     &   & &   \\
                                &   				    &     \multicolumn{1}{|c}{-\ell x}              & \multicolumn{1}{c|}{0 }  &                      &  & &       \\
                                \cline{3-4}
                                \cline{5-6}
                                 &   				    & 		   			     &                                   & \multicolumn{1}{|c}{0 } &  \multicolumn{1}{c|}{mx}  & & \\
                                & 		  		    & 					    &					&\multicolumn{1}{|c}{ -mx} &  \multicolumn{1}{c|}{0} & &  \\
                                                                \cline{5-6}
                                                                \cline{7-8}
                               &   				    & 		   			     &                                   &       & &     \multicolumn{1}{|c}{0 } &  \multicolumn{1}{c|}{0 }   \\
                                & 		  		    & 					    &					&       &   &  \multicolumn{1}{|c}{0} &  \multicolumn{1}{c|}{0}   \\
                                \cline{7-8}
\end{array}\right) : x\in\bb{R}\right\}.
\]
By using the  basis of $\fr{spin}(7)\subset\fr{so}(8)$ given  in terms of the skew-symmetric matrices $E_{ij}$  (cf.~e.g.~\cite{BK}),  we see that
a Cartan subalgebra of $\fr{spin}(7)$ which occurs as the lift of a Cartan subalgebra of $\fr{so}(7)$ has the following expression:
\[
\mathsf{t}^{3}=\left\{\left( \:
\begin{array}{*{8}{c}}
 \cline{1-2}
  \multicolumn{1}{|c}{0}  &   \multicolumn{1}{c|}{x}             &                                      &                                  &                    &  & &  \\
  \multicolumn{1}{|c}{-x}         &    \multicolumn{1}{c|}{0}    &                                      &                                   &                      &  & &    \\
    \cline{1-2}
  \cline{3-4}
                                &                                     &  \multicolumn{1}{|c}{0}        &   \multicolumn{1}{c|}{ y }            &                     &   & &   \\
                                &   				    &     \multicolumn{1}{|c}{ -y}              & \multicolumn{1}{c|}{0 }  &                      &  & &       \\
                                \cline{3-4}
                                \cline{5-6}
                                 &   				    & 		   			     &                                   & \multicolumn{1}{|c}{0 } &  \multicolumn{1}{c|}{ z}  & & \\
                                & 		  		    & 					    &					&\multicolumn{1}{|c}{ -z} &  \multicolumn{1}{c|}{0} & &  \\
                                                                \cline{5-6}
                                                                \cline{7-8}
                               &   				    & 		   			     &                                   &       & &     \multicolumn{1}{|c}{0} &  \multicolumn{1}{c|}{-(x-y-z) }   \\
                                & 		  		    & 					    &					&       &   &  \multicolumn{1}{|c}{x-y-z } &  \multicolumn{1}{c|}{0}   \\
                                \cline{7-8}
\end{array}\right) : x, y, z\in\bb{R}\right\}\subset\fr{spin}(7).
\]
By comparing $\chi_{*}(\fr{h})$ with $\mathsf{t}^{3}$, we see that  $\chi_{*}(\fr{h})$ is contained in $\mathsf{t}^{3}$ if and only if  $k-\ell-m=0$.
Consequently, $C_{k, \ell, m}$ admits invariant $\Spin(7)$-structures if and only if  $k=\ell+m$.
\end{proof}

Explicit examples of invariant $\Spin(7)$-structures on $C_{\ell+m, \ell, m}$ will be given in Section \ref{CklmExamples}.
In the next two examples, we describe some special spaces belonging to the family $C_{k, \ell, m}$.

\begin{example}\label{su2su2s2}
For $k=\ell=1$ and $m=0$, the space $C_{1,1,0}$ coincides with the direct product $(\SO(4)/\SO(2))\times\SU(2)=\bb{V}_{4, 2}\times\Ss^{3}$, where we recall that $\bb{V}_{4, 2}\cong\Ss^{3}\times\Ss^{2}$ (cf.~\cite{Nik2}).
Here, the 1-dimensional Lie subalgebra $\fr{u}(1)_{1, 1}\subset\fr{t}^{2}$, where $\fr{t}^{2}$ is a maximal torus of $\SO(4)$, corresponds to the diagonal embedding of $\fr{u}(1)$ in $\fr{so}(4)$,
and the existence of an invariant $\Spin(7)$-structure  follows from the inclusions $\fr{u}(1)_{1, 1}\subset\fr{t}^{2}\subset\mathsf{t}^{3}$.
\end{example}

\begin{example}\label{C100}
Consider the direct product of the group manifold $\SU(2)\times\SU(2)=\Spin(4)$ with the homogeneous space $\Ss^2=\SU(2)/\U(1)$,
\[
M=G/H=G'/H'\times G''/H''=\displaystyle\frac{\SU(2)\times\SU(2)}{\{e\}}\times\frac{\SU(2)}{\U(1)}\,.
\]
The stability algebra  is given by $\fr{h}\coloneqq\{0\}\oplus\fr{u}(1)\cong\fr{u}(1)$, and the corresponding isotropy action is effective.
In particular, $\chi_{*}(\fr{u}(1))$ acts trivially on the tangent space of $G'/H'=\SU(2)\times\SU(2)$, while $\U(1)$ sits diagonally inside $\SU(2)$
and induces an irreducible representation when restricted to $T_{eH''}G''/H''$.
Thus, this manifold belongs to the family $C_{k, \ell, m}$ for $k=1$ and $\ell=m=0$, and we have the obvious diffeomorphisms
\[
 \frac{\SU(2)\times\SU(2)}{\{e\}}\times\frac{\SU(2)}{\U(1)}=C_{1, 0, 0}\cong C_{0, 1, 0}\cong C_{0, 1, 0}\,.
\]
The reductive decomposition of $\fr{g}=\fr{su}(2)\oplus\fr{su}(2)\oplus\fr{su}(2)$ is
\[
\fr{g} \cong \fr{so}(4)\oplus\fr{su}(2)=\fr{u}(1)\oplus\fr{m}, \quad \fr{m}=\fr{n}\oplus V^2=\fr{n}_1\oplus\fr{n}_2\oplus \U,
\]
where $\fr{so}(4)\cong\fr{su}(2)\oplus\fr{su}(2)=\fr{n}_1\oplus\fr{n}_2=\fr{n}\cong T_{e}\Spin(4)$, and $\U\coloneqq V^{2}=[\bb{C}]_{\bb{R}}$ denotes the realification of the standard representation of $\U(1)$ on $\bb{C}$.
Since the triple $(1,0,0)$ does not satisfy the condition $k=\ell+m$, the space $M=G'/H'\times G''/H''$ cannot admit any invariant $\Spin(7)$-structure by Proposition \ref{mi1}.
\end{example}

\medskip
\subsection{The Calabi-Eckmann manifold  ${(\SU(3)/\SU(2))\times\SU(2)}$}\label{ss5ss3}
This space is the direct product of the homogeneous spaces $\SU(3)/\SU(2)$ and $\SU(2)$.
The former is the canonical presentation of the unit 5-sphere $\Ss^5\subset\bb{C}^3$ acted on transitively and almost effectively by the Lie group $\SU(3)$ with stability group at $(1, 0, 0)\in\bb{C}^{3}$ given by
\[
\left\{
\begin{pmatrix}
1 & 0 \\
0 &  A
 \end{pmatrix}\in \SU(3) : A\in\SU(2)  \right\}\cong \SU(2).
\]
Now, the isotropy group $\chi(\SU(2))$ lies inside $\SU(3)$. 
Recall that the latter is  the  stability  subgroup of a unit  vector $v \in \mathbb{R}^7$  in  the $\G_2$-module $\mathbb{R}^7$    and  $\G_2$  is  the  stability subgroup   
of  a unit vector  $w \in \mathbb{R}^8$  of  the  tautological $\Spin(7)$-module $\mathbb{R}^8$ (see e.g.~\cite{Br1,Rei}). In particular, $\G_2$ is a maximal subgroup of $\Spin(7)$. 
Consequently, the homogeneous space $M=(\SU(3)/\SU(2))\times\SU(2)$ admits invariant $\Spin(7)$-structures.
Explicit examples will be discussed in Section \ref{CEck}.

\begin{remark}\label{CEck}
The space $(\SU(3)/\SU(2))\times\SU(2)$ is a {\it Calabi-Eckmann manifold},  i.e.,
a complex homogeneous non-K\"ahler manifold diffeomorphic to the product of two odd-dimensional spheres of dimension greater than two.
In particular, it is a torus bundle over $\bb{CP}^{2}\times\bb{CP}^1$,
\[
{\rm T}^{2}\cong\frac{\U(2)\times\U(1)}{\SU(2)}\longrightarrow  \frac{\SU(3)\times\SU(2)}{\SU(2)\times\{e\}}\longrightarrow \frac{\SU(3)\times\SU(2)}{\U(2)\times\U(1)}\,,
\]
and, consequently, a C-space with $H^{2}(M,\bb{Z})=0$ (see \cite{Klaus}).
\end{remark}

\medskip
\subsection{The space  $(\SU(2) \times \SU(2) \times \SU(2))/\Delta(\SU(2)))\times(\SU(2)/\U(1))$}\label{Space4}
For the sake of convenience, from now on we let
\[
L^6 \coloneqq \displaystyle\frac{\SU(2) \times \SU(2) \times \SU(2)}{\Delta(\SU(2))},  \quad   X^6 \coloneqq \displaystyle\frac{\SO(3)\times\SO(3)\times\SO(3)}{\Delta\SO(3)}.
\]
Both manifolds $L^6$ and $X^6$ are {\it Ledger-Obata spaces}, i.e., of the form $(K \times K \times K)/\Delta K$,  with $K$ a compact simple Lie group and $\Delta K=\{(k, k, k) : k\in K\}$.
Moreover, there is a natural isomorphism between the compact homogeneous space $(K\times K\times K)/\Delta K$ and the compact semisimple Lie group  $K\times K$.
Consequently, the corresponding 8-manifolds are diffeomorphic to  $C_{1, 0, 0}=\Spin(4)\times\Ss^{2}$.
Notice however that  $L^6 \times(\SU(2)/\U(1))$ does not belong to the family $C_{k, \ell, m}$.

Since $\SU(2)/\bb{Z}_2 \cong \SO(3)$, the effective coset  $X^6 \times (\SO(3)/\SO(2))$ is covered by the almost effective simply connected coset $L^6 \times(\SU(2)/\U(1))$.
Thus, to conclude the proof of Theorem \ref{MainTheoremB}, it is sufficient to show that  the space $X^6 \times(\SO(3)/\SO(2))$ does not admit any  invariant $\Spin(7)$-structure.
To this aim, we will first describe the isotropy representation, and then the space of invariant forms.

Let $\tilde{\fr{k}}  =\fr{k}\oplus\fr{k}\oplus\fr{k}$ and $\Delta\fr{k}=\{(X, X, X) : X\in\fr{k} \}$ be the Lie algebras of $K \times K \times K$ and $\Delta K$, respectively.
A natural  choice of an $\Ad(\Delta K)$-invariant complement of $
\Delta\fr{k}$ in $\tilde{\fr{k}}$ is  given for instance  by (see e.g. \cite{Nikol})
\[
\fr{n}=\left\{ \left(a_1X, a_2X, a_3X\right) \in \tilde{\fr{k}}~:~X\in\fr{k}, \ a_i\in\bb{R}, \  \sum_{i=1}^{3}a_i=0\right\},
\]
and then $\Delta\fr{k}=\left\{(a X, a X, a X)  \in \tilde{\fr {k}}~:~X\in\fr{k}, \  a \in \bb{R} \right\}$.
In our case, $\fr{k} = \fr{so}(3)\cong\fr{su}(2)$.

Consider on $\tilde{\frak {k}}$  the  bi-invariant metric  $\langle A,  B \rangle=-(1/2){\rm tr}(AB)$, which is a multiple of the corresponding Killing form.
The Lie algebra  $\fr{so}(3)$  can be identified with the span of $\{E_{12}, E_{13}, E_{23}\}$, and
the matrices
\[
h_1 \coloneqq E_{12}+E_{45}+E_{78}, \quad h_{2} \coloneqq E_{13}+E_{46}+E_{79}, \quad h_3 \coloneqq E_{14}+E_{56}+E_{89}
\]
generate $\Delta\fr{so}(3)\cong\fr{so}(3)$.
By using the Gram-Schmidt process, we see that an orthogonal splitting of $\fr{n}$ with respect to  $\langle \cdot , \cdot \rangle$  is  given by
\[
\fr{n}=\fr{n}_1\oplus\fr{n}_2= \Big\{a_1(X, 0, -X)~:~X\in\fr{k},~a_1\in\bb{R} \Big\}\oplus \left \{ a_2\left(-\frac{1}{2}X, X, -\frac{1}{2}X\right)~:~X\in\fr{k},~a_2\in\bb{R}\right \},
\]
where both $\fr{n}_1$ and $\fr{n}_2$ are  irreducible.
Therefore
\[
\renewcommand\arraystretch{1.4}
\begin{array}{lll}
e_{1}=\frac{1}{\sqrt{2}}(E_{12}-E_{78}), & e_2=\frac{1}{\sqrt{2}}(E_{13}-E_{79}), &   e_3=\frac{1}{\sqrt{2}}(E_{23}-E_{89}) \\
e_{4}=-\frac{\sqrt{6}}{6}(E_{12}-2E_{45}-E_{78}), & e_{5}=-\frac{\sqrt{6}}{6}(E_{13}-2E_{46}-E_{79}), & e_{6}=-\frac{\sqrt{6}}{6}(E_{23}-2E_{56}-E_{89})\\
\end{array}
\]
form a  $\langle \cdot , \cdot\rangle$-orthonormal basis of $\fr{n}$  such that $\fr{n}_1=\Span_{\bb{R}}\{e_1, e_2, e_3\}$ and $\fr{n}_2=\Span_{\bb{R}}\{e_4, e_5, e_6\}$, respectively.
Let us denote by $\Ee$ the standard representation of $\fr{so}(3)$. Then,  one  may  identify $\fr{n}_1=\E$ and $\fr{n}_2=\E'$,  where   $\Ee'$  denotes another  copy of $\Ee$.

Set now  $\fr{g} \coloneqq \fr{k}\oplus\fr{k}\oplus\fr{k}\oplus\fr{k}=3\fr{k}\oplus\fr{k}=4\fr{so}(3)=4\fr{su}(2)$,
and consider  the Lie algebra
\[
\fr{h} \coloneqq \Delta\fr{so}(3)\oplus\fr{u}(1)\,.
\]
This is a subalgebra of $\fr{so}(8)$, since    $C_{\SO(8)}(\Delta\fr{so}(3))= \U(1)\times\U(1)$. Its defining representation is $\Ee\oplus\Ee'\oplus\U$, where $\U$ denotes the standard representation
of $\fr{u}(1)$ on $\bb{C}$.
Since $\fr{u}(1)$ sits inside the last summand of $\fr{g}$, $\fr{h}$ sits inside $\fr{g}$ and  the pair $(\fr{g}, \fr{h})$ induces the  8-dimensional effective homogeneous space
$X^6 \times\frac{\SO(3)}{\SO(2)}$.
Summing up, a reductive decomposition of $\fr{g}=4\fr{so}(3)$ is
\[
\fr{g}= \fr{h}\oplus\fr{m}, \quad \fr{m} = \fr{n}\oplus\U = \fr{n}_1\oplus\fr{n}_2\oplus\U = \E\oplus\E'\oplus\U,
\]
where $\fr{n}$ coincides with the tangent space to  $X^6$ and $\U$ with the tangent space  to  $\frac{\SU(2)}{\U(1)}$ at the identity coset.
Moreover,  an orthonormal basis of $\fr{m}$ is given by
\[
\left\{e_1,\ldots,e_6, e_7 \coloneqq -E_{10, 11},~e_8\coloneqq-E_{10, 12}\right\},
\]
so that  $\U=\Span_{\bb{R}}\{e_7, e_8\}$.
The elements $\{h_1, h_2, h_3\}$ defined above, together with $h_4 \coloneqq E_{11, 12}$ generate the stability algebra $\fr{h}$, which acts on $\fr{m}$ via the isotropy representation as follows
\begin{equation}\label{isoObata}
\begin{array}{lll}
\chi_{*}(h_1)e_1=0, &   \chi_{*}(h_2)e_1=-e_3, & \chi_{*}(h_3)e_1=e_2,		\\
\chi_{*}(h_1)e_2=e_3, & \chi_{*}(h_2)e_2=0, & \chi_{*}(h_3)e_2=-e_{1},		\\
\chi_{*}(h_1)e_3=-e_2, & \chi_{*}(h_2)e_3=e_1, & \chi_{*}(h_3)e_3=0,		\\
\chi_{*}(h_1)e_4=0, & \chi_{*}(h_2)e_4=-e_6,  & \chi_{*}(h_3)e_4=e_5,		\\
\chi_{*}(h_1)e_5=e_6, & \chi_{*}(h_2)e_5=0, & \chi_{*}(h_3)e_5=-e_4,		\\
\chi_{*}(h_1)e_6=-e_5, & \chi_{*}(h_2)e_6=e_4, & \chi_{*}(h_3)e_6=0,		\\
\end{array}
\end{equation}
and
\begin{equation}\label{isoS2}
\chi_{*}(h_4)e_7=-e_8,\quad	\chi_{*}(h_4)e_8=e_7.
\end{equation}

We can now determine the invariant forms.
\begin{lemma}  \label{lemmaspace3invforms}
Let $X^6\times (\SO(3)/\SO(2)) = G/H$. Then, the following hold:
\begin{enumerate}[1)]
\item  the space of $G$-invariant 1-forms is trivial;
\item the space of  $G$-invariant  2-forms is 2-dimensional and it is generated by $\omega_{1} \coloneqq e^{14}+e^{25}+e^{36}$  and $\omega_2 \coloneqq e^{78}$;
\item the space of  $G$-invariant 3-forms is 2-dimensional and it is generated by $\{e^{123}, e^{456}\}$;
\item  the space of  $G$-invariant 4-forms 2-dimensional and it is generated by $\left\{\omega_1\wedge\omega_1, \omega_1\wedge\omega_2\right\}$.
\end{enumerate}
\end{lemma}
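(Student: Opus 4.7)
The plan is to identify $G$-invariant $k$-forms on $G/H$ with $\Ad(H)$-invariant elements of $\Lambda^{k}\fr{m}^{*}$. Since $H=\Delta\SO(3)\times\SO(2)$ is connected, this reduces to finding the common kernel of the infinitesimal isotropy operators $\chi_{*}(h_{1}),\ldots,\chi_{*}(h_{4})$ on $\Lambda^{k}\fr{m}^{*}$, whose action on the basis of $\fr{m}$ is recorded in \eqref{isoObata}--\eqref{isoS2}. From those formulas one reads that $\Delta\SO(3)$ acts on $V\coloneqq\langle e_{1},e_{2},e_{3}\rangle$ and $V'\coloneqq\langle e_{4},e_{5},e_{6}\rangle$ by two copies of the standard representation $\E$ (under the identification $e_{i}\leftrightarrow e_{i+3}$) and trivially on $U\coloneqq\langle e_{7},e_{8}\rangle$, whereas $\SO(2)$ rotates $U$ and acts trivially on $V\oplus V'$. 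Both factors preserve the tridegree, so the invariants split as
\[
(\Lambda^{k}\fr{m}^{*})^{H}\;=\;\bigoplus_{p+q+r=k}\bigl(\Lambda^{p}V^{*}\otimes\Lambda^{q}V'^{*}\bigr)^{\Delta\SO(3)}\otimes(\Lambda^{r}U^{*})^{\SO(2)}.
\]

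I would then handle the two tensor factors separately. For the $U$-factor, $(\Lambda^{r}U^{*})^{\SO(2)}$ is one-dimensional for $r\in\{0,2\}$, generated by $1$ and $\omega_{2}\coloneqq e^{78}$ respectively, and vanishes for $r=1$, so only the bidegrees with $r$ even can contribute. For the $V\oplus V'$-factor I would use the Hodge-type identifications $\Lambda^{2}V^{*}\cong V^{*}\cong V$ and $\Lambda^{3}V^{*}\cong\bb{R}$ (and the analogues for $V'$) to rewrite every $\Lambda^{p}V^{*}\otimes\Lambda^{q}V'^{*}$ as a tensor product of copies of $\E$ on which $\SO(3)$ acts diagonally. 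Elementary $\SO(3)$-invariant theory then yields an invariant only when the total tensor degree is even, and in the low degrees that occur the invariant subspace is either zero or one-dimensional (generated by the metric pairing).

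Assembling contributions degree by degree gives the expected generators: no invariant $1$-form (each bidegree is a single copy of $\E$); the $2$-forms $\omega_{1}\coloneqq\sum_{i=1}^{3}e^{i}\wedge e^{i+3}$ from bidegree $(1,1,0)$ and $\omega_{2}$ from $(0,0,2)$; the $3$-forms $e^{123}$ and $e^{456}$ from the top bidegrees $(3,0,0)$ and $(0,3,0)$; and the $4$-forms $\omega_{1}\wedge\omega_{1}$ from $(2,2,0)$ and $\omega_{1}\wedge\omega_{2}$ from $(1,1,2)$. Invariance of each listed generator under $h_{1},\ldots,h_{4}$ is then confirmed by a short direct computation using \eqref{isoObata}--\eqref{isoS2}, and linear independence in each degree is immediate from the multi-indices. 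The delicate step I would watch most carefully is the $k=3$ case, where the mixed bidegrees $(2,1,0)$ and $(1,2,0)$ abstractly carry one-dimensional $\Delta\SO(3)$-invariant subspaces via the Hodge-star identifications $\Lambda^{2}V^{*}\cong V$ and $\Lambda^{2}V'^{*}\cong V'$; one has to unfold those identifications into explicit coefficient expressions and check by hand whether the candidate $3$-forms produced there reduce to the generators $e^{123},e^{456}$ already listed, in order to confirm the stated dimension.
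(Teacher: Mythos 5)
Your overall strategy---identifying invariant forms with $\Ad(H)$-invariants of $\Lambda^k\fr{m}^*$ and splitting by the tridegree with respect to $\fr{m}=\fr{n}_1\oplus\fr{n}_2\oplus\U$---is exactly the paper's, and your treatment of degrees $1$, $2$ and $4$ is fine. The problem is the degree-$3$ case, which you explicitly leave open and which cannot be closed in the way you suggest. Your own representation theory correctly shows that the mixed bidegrees $(2,1,0)$ and $(1,2,0)$ each carry a one-dimensional space of $\Delta\SO(3)$-invariants, since $\Lambda^2\fr{n}_1^*\wedge\fr{n}_2^*\cong\Lambda^2\E\otimes\E\cong\E\otimes\E$ contains exactly one copy of the trivial module. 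But a nonzero element of $\Lambda^2\fr{n}_1^*\wedge\fr{n}_2^*$ can never ``reduce to'' a combination of $e^{123}\in\Lambda^3\fr{n}_1^*$ and $e^{456}\in\Lambda^3\fr{n}_2^*$: distinct tridegree components are linearly independent by construction. So the check you defer has only one possible outcome, namely that these invariants are genuinely new. Unfolding the Hodge-star identification gives them explicitly: $e^{234}-e^{135}+e^{126}$ and $e^{156}-e^{246}+e^{345}$ are annihilated by $\chi_*(h_1),\ldots,\chi_*(h_4)$, as a short direct verification from \eqref{isoObata} and \eqref{isoS2} confirms.

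Consequently your argument, carried out to the end, does not prove item 3) of the Lemma; it shows that the space of invariant $3$-forms is $4$-dimensional rather than $2$-dimensional. Note that the paper's own proof simply asserts that the $9$-dimensional spaces $\Lambda^2\fr{n}_1^*\wedge\fr{n}_2^*$ and $\fr{n}_1^*\wedge\Lambda^2\fr{n}_2^*$ contain no invariant element, which is incompatible with the equivariant isomorphism $\Lambda^2\E\cong\E$ just used; the discrepancy therefore lies in the statement and the paper's proof, not in your method. This does not affect the application the Lemma is put to (Proposition \ref{ana} only uses the invariant $4$-forms, for which both you and the paper obtain the correct $2$-dimensional answer), but as a proof of the Lemma as stated your proposal has an unresolved---indeed unresolvable---gap at $k=3$.
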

\begin{proof}
The assertion for invariant 1-forms immediately follows from \eqref{isoObata} and \eqref{isoS2}.
For invariant 2-forms on $X^6$, we have the equivariant decomposition
\[
(\Lambda^{2}\fr{n}^*)^{H}=(\Lambda^{2}\fr{n}_1^*)^{H}\oplus (\fr{n}_1^*\wedge\fr{n}_2^*)^{H}\oplus(\Lambda^{2}\fr{n}_2^*)^{H}\,.
\]
Here, the first and the third module vanish, while from (\ref{isoObata}) we see that the second module is generated by $\omega_1$.
For the invariant 3-forms on $X^6$, we see that only the modules $(\Lambda^{3}\fr{n}_1^*)^{H}$ and $(\Lambda^{3}\fr{n}_2^*)^{H}$ are non-trivial.
In particular, the 9-dimensional spaces $\Lambda^2\fr{n}_1^*\wedge\fr{n}_2^*$ and $\fr{n}_1^*\wedge\Lambda^{2}\fr{n}_2^*$ do not contain any invariant element.
Now, the claim for $G/H$ follows  from the orthogonal decompositions
\begin{eqnarray*}
(\Lambda^{3}\fr{m}^*)^H	&=&	(\Lambda^{3}\fr{n}^*)^H\oplus(\Lambda^{2}\fr{n}^*\wedge\U^{*})^H\oplus(\fr{n}^*\wedge\Lambda^{2}\U^*)^{H}, \\
(\Lambda^{4}\fr{m}^*)^H	&=&	(\Lambda^{4}\fr{n}^*)^H\oplus (\Lambda^{3}\fr{n}^*\wedge\U^*)^H\oplus (\Lambda^{2}\fr{n}^*\wedge\Lambda^2\U^*)^{H}.
 \end{eqnarray*}
\end{proof}

\begin{prop}\label{ana}
The space $X^6 \times(\SO(3)/\SO(2))$ cannot admit any invariant $\Spin(7)$-structure.
\end{prop}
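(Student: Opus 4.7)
The plan is to apply Lemma~\ref{lemmaspace3invforms} to reduce the question to a two-parameter family of candidate $4$-forms, and then to show that none of them is admissible by exhibiting a non-compact Lie subgroup inside the pointwise $\Gl(\fr{m})$-stabilizer of each such form. The key point is that admissibility of a $4$-form on $\bb{R}^8$ is by definition equivalent to its $\Gl(8,\bb{R})$-stabilizer being conjugate to the compact group $\Spin(7)\subset \SO(8)$; consequently, any non-compact subgroup of the stabilizer rules out admissibility at once.

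By Lemma~\ref{lemmaspace3invforms}, every $G$-invariant $4$-form has the form
\[
\Phi \;=\; a\,(\omega_1\wedge\omega_1) \;+\; b\,(\omega_1\wedge\omega_2), \qquad a, b \in \bb{R},
\]
where $\omega_1 = e^{14}+e^{25}+e^{36}$ is a linear symplectic form on the six-dimensional subspace $\fr{n} = \fr{n}_1 \oplus \fr{n}_2 = \Span_{\bb{R}}\{e_1,\ldots,e_6\}$ and $\omega_2 = e^{78}$ is supported on the complementary subspace $\U = \Span_{\bb{R}}\{e_7,e_8\}$. The crucial observation is that every element $A \in \Sp(\omega_1) \cong \Sp(6,\bb{R})$, extended by the identity on $\U$, preserves $\omega_1$ and $\omega_2$ separately and hence preserves $\Phi$ for every choice of $(a, b)$, including the degenerate cases. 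This embeds the non-compact $21$-dimensional real symplectic group $\Sp(6,\bb{R})$ into the pointwise stabilizer of $\Phi$ in $\Gl(\fr{m})$.

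Since a non-compact Lie group cannot be contained in any conjugate of the compact group $\Spin(7)$, it follows that $\Phi$ is not admissible for any $(a, b) \in \bb{R}^2$, and therefore $X^6 \times (\SO(3)/\SO(2))$ admits no invariant $\Spin(7)$-structure. The only point I would check carefully is the uniformity across the two-parameter family, including $a = 0$ and $b = 0$, but this is automatic from the fact that $\Sp(6,\bb{R})$ fixes each summand of $\Phi$ independently. As an alternative route, one may argue via the pointwise criterion that $\iota_v \Phi$ must be a $\G_2$-$3$-form on $\bb{R}^8/\Span(v) \cong \bb{R}^7$ for every non-zero $v$: here $\iota_{e_7}\Phi = b\,\omega_1 \wedge e^8$ is decomposable (hence not stable in Hitchin's sense) when $b \neq 0$, and vanishes when $b = 0$, so admissibility fails in either case.
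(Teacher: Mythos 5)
Your argument is correct, and it takes a genuinely different route from the paper's at the decisive step. Both proofs begin identically, using Lemma~\ref{lemmaspace3invforms} to reduce to the two-parameter family $\Phi = a\,\omega_1\wedge\omega_1 + b\,\omega_1\wedge\omega_2$. The paper then invokes the explicit formula of \cite[Thm.~4.3.3]{Kar} for the metric induced by an admissible $4$-form: taking $u=e_1$, all entries of the relevant $7\times 7$ matrix vanish, so the determinant (proportional to the norm of $u$) is zero, contradicting admissibility. You instead observe that $\Sp(\omega_1)\cong\Sp(6,\bb{R})$, extended by the identity on $\U=\Span_{\bb{R}}\{e_7,e_8\}$, fixes $\omega_1$ and $\omega_2$ separately and hence stabilizes $\Phi$ for every $(a,b)$; since this is a non-compact closed subgroup of $\Gl(\fr{m})$ and the stabilizer of an admissible form is conjugate to the compact group $\Spin(7)$, no such $\Phi$ can be admissible. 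Your argument is computation-free and more conceptual (it pinpoints the excess of symmetry as the obstruction), whereas the paper's determinant criterion is a mechanical test that applies even when no large stabilizer is visible. Two small caveats: only the implication ``admissible $\Rightarrow$ stabilizer conjugate to $\Spin(7)$'' is needed (not the equivalence you state), and in your alternative route the $3$-form $\iota_{e_7}\Phi = b\,\omega_1\wedge e^8 = b\,(e^{148}+e^{258}+e^{368})$ is not decomposable; the correct observation is that it is divisible by $e^8$ (equivalently, its stabilizer again contains $\Sp(6,\bb{R})$, of dimension $21>14$), which already precludes stability. Neither point affects the validity of your main argument.
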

\begin{proof}
We show that there are no invariant admissible 4-forms on this space.
Let us consider the generic $\mathrm{Ad}(H)$-invariant 4-form on $\fr{m}$
\[
\Phi \coloneqq a_1(e^{1245}+e^{1346}+e^{2356})+a_2(e^{1478}+e^{2578}+e^{3678}),
\]
where $a_1, a_2\in\bb{R}$.
By   \cite[Thm.~4.3.3]{Kar}, if $\Phi$ defines a $\Spin(7)$-structure inducing the orientation $e^{12345678}$, then the norm of a vector $ u=\sum_{i=1}^{8} u^{k}e_{k} \in \fr{m}$ with $u^{1}\neq 0$ is proportional to the
determinant of the $7\times7$ matrix $\left( a_{ij}\right)$ with the following entries
\[
a_{ij} \coloneqq (e_{i}\lrcorner u\lrcorner\Phi)\wedge (e_{j}\lrcorner u\lrcorner\Phi)\wedge(u\lrcorner\Phi)(e_2, \ldots, e_8), \quad 2\leq i \leq j \leq8.
\]
Considering $u=e_1$, an easy computation shows that all $a_{ij}$ vanish. Thus, $\Phi$ cannot define an invariant $\Spin(7)$-structure.
\end{proof}


\section{Explicit examples  of  invariant $\Spin(7)$-structures}\label{genericexa}

\subsection{The infinite family $C_{k, \ell, m}$}\label{CklmExamples}
By Proposition \ref{mi1}, the space  $C_{k, \ell, m}$ admits invariant $\Spin(7)$-structures if and only if  $k=\ell+m$.
In particular, since $\ell\geq m\geq 0$ with $\ell>0$, two different cases arise, namely
\[
C_{\ell+m, \ell, m}, \mbox{ with } \ell>m>0, \qquad C_{\ell, \ell, 0}, \mbox{ with } \ell>0.
\]
For these two classes of homogeneous manifolds, the invariant objects (metrics, forms) are different.

We begin examining the family $C_{k,\ell,m} =G/H$ for  $k>\ell>m>0$, where $G = \SU(2)\times\SU(2)\times\SU(2)$ and $H = \U(1)_{k, \ell, m}$.
In our computations, we shall keep on using the notation  introduced in Section \ref{cklm}.

Let $\fr{m}_{0} \coloneqq \fr{h}^{\perp}$ denote the orthogonal complement of $\fr{h}$ inside the maximal torus $\fr{t}$ with respect to $\langle \cdot , \cdot \rangle$.
The space $\fr{m}_{0}$ is spanned by $\{e_7, e_8\}$ and it is an abelian Lie algebra,  i.e., $[e_7, e_8]=0$.
Thus, whenever $k, \ell, m\in\bb{Q}$, it generates a closed connected  2-dimensional abelian subgroup of ${\rm T}^{3}$, i.e., a 2-torus, which we denote by ${\rm T}^{2}_{k, \ell, m}\subset{\rm T}^{3}$.
This induces the 7-dimensional homogeneous space (cf. \cite{Nik2, Rei})
\[
Q_{k, \ell, m}=(\SU(2)\times\SU(2)\times\SU(2))/{\rm T}^{2}_{k, \ell, m}\cong \Ss^{3}\times\Ss^{2}\times\Ss^{2}\,,
\]
which is a  circle bundle $\mathsf{q}  : Q_{k, \ell, m}\to \Ss^{2}\times\Ss^{2}\times\Ss^{2}$ over $N^{6} \coloneqq \SU(2)\times\SU(2)\times\SU(2)/{\rm T}^{3}=\Ss^2\times\Ss^2\times\Ss^2$.

Moving to the family $C_{k, \ell, m}$,  the reductive decomposition  described before now reads
\begin{equation}\label{mC}
\fr{g}=\fr{u}(1)_{k, \ell, m}\oplus\fr{m}, \quad  \fr{m}\cong T_{o}C_{k, \ell, m}=\fr{m}_1\oplus\fr{m}_2\oplus\fr{m}_3\oplus\fr{m}_{0}=\fr{p}\oplus\fr{m}_{0}
\end{equation}
where $ \fr{m}_1=\Span_{\bb{R}}\{e_1, e_2\}$, $\fr{m}_2=\Span_{\bb{R}}\{e_3, e_4\},$ $\fr{m}_3=\Span_{\bb{R}}\{e_5, e_6\}$
and  $\fr{p} \coloneqq \fr{m}_1\oplus\fr{m}_2\oplus\fr{m}_3$ coincides with the 6-dimensional tangent space of $\Ss^{2}\times\Ss^{2}\times\Ss^{2}$.
In the following, every $\fr{m}_i$,  $i=0, 1, 2, 3$,  will be viewed as an $\Ad(H)$-module.

\begin{remark}\label{my11}
When $ {\rm gcd}(k, \ell, m)=1,$  the simply connected coset $C_{k, \ell, m}$ is a non-K\"ahler C-space in the sense of Wang (cf.~\cite{ACh, Pod}),
i.e., a simply connected compact homogeneous complex manifold which is not K\"ahler.
Indeed, since the integers $k, \ell, m$ are assumed to be relatively prime, the  action  of $H$ on  $G$
 \[
 (e^{i\phi}, (x, y, z))\mapsto (e^{ik\phi}x, e^{i\ell\phi}y, e^{im\phi}z),
 \]
 is free.  Consequently, the projection $\mathsf{c} :C_{k, \ell, m}\to N^{6}=\Ss^{2}\times\Ss^{2}\times\Ss^{2}$  defines a principal torus bundle,
\[
{\rm T}^{2}_{k, \ell, m}\cong{\rm T}^{3}/H  \  \longrightarrow  \ C_{k, \ell, m}=G/H \ \stackrel{\mathsf{c}}{\longrightarrow}  \  N^{6}=G/{\rm T}^{3}=\Ss^{2}\times\Ss^{2}\times\Ss^{2},
\]
and  we have  the following diagram:
 \[
\xymatrix{
\U(1)_{k, \ell,  m} \ar[d]  \ar @{^{(}->}[r]   \ar@[][dr]  & \   {\rm T}^{3}               \ar[d]                                            & {\rm T}^{2}_{k, \ell, m} \ar @{_{(}->}[l] \ar@[][ld] \ar[d] \\
Q_{k, \ell, m}\cong \Ss^{3}\times\Ss^{2}\times\Ss^{2} \ar[rd]_{\mathsf{q}}              & G=(\SU(2))^{\times 3} \ar@[][l] \ar[d]^{\pi} \ar@[][r] & C_{k, \ell, m}\cong\Ss^{3}\times\Ss^{3}\times\Ss^{2} \ar[ld]^{\mathsf{c}}  \\
                                             & N^{6}=G/{\rm T}^{3}\cong \Ss^{2}\times\Ss^{2}\times\Ss^{2} &
}
\]
Since $C_{k, \ell, m}$ is a  ${\rm T}^2_{k, \ell, m}$-bundle over the (full) flag manifold $N^{6}=\bb{CP}^{1}\times\bb{CP}^{1}\times\bb{CP}^{1}$, it must be a C-space.
Any complex structure $J_{0}$ on $\fr{m}_{0}$ is automatically $\Ad(H)$-invariant. Hence, whenever $J_{\fr{p}}$ defines an $\Ad({\rm T}^{3})$-invariant complex structure on
$N^{6}=G/{\rm T}^{3}$, the endomorphism $J_{\fr{m}}\coloneqq J_{\fr{p}}+J_{\fr{m}_{0}}$ defines an $\Ad(H)$-invariant complex structure on $C_{k, \ell, m}$.
 \end{remark}

In order to describe the set of $G$-invariant metrics on $C_{k, \ell, m}$, or equivalently, the space of $\Ad(H)$-invariant inner products on $\fr{m}$,
we first need to examine the properties of the modules $\fr{m}_i, i = 0, \ldots, 3$.
\begin{lemma}\label{men}
Assume that $k>\ell> m>0$. Then, the $\fr{h}$-modules $\fr{m}_1, \fr{m}_2$ and $\fr{m}_3$ are pairwise inequivalent and irreducible.
In contrast, $\fr{m}_{0}$ decomposes into two irreducible 1-dimensional submodules, which we denote by $\fr{m}_4\coloneqq\Span_{\bb{R}}\{e_7\}$ and $\fr{m}_5\coloneqq\Span_{\bb{R}}\{e_8\}$, respectively.
These submodules are mutually equivalent, and they are not equivalent to $\fr{m}_1, \fr{m}_2, \fr{m}_3$. Consequently, the orthogonal $\Ad(H)$-invariant decomposition
\[
\fr{m}=\fr{m}_1\oplus\fr{m}_2\oplus\fr{m}_3\oplus\fr{m}_0=\fr{m}_1\oplus\fr{m}_2\oplus\fr{m}_3\oplus\fr{m}_4\oplus\fr{m}_5
\]
is not in general unique.
\end{lemma}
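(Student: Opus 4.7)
The plan is to read the isotropy action directly from the bracket relations computed in \eqref{adjon1}. Since $\fr{h}=\Span_{\bb{R}}\{e_9\}$ is one-dimensional, the representation theory reduces to that of $\fr{u}(1)$ on real vector spaces: the real irreducibles are either the trivial 1-dimensional module or 2-dimensional modules on which a generator acts as rotation with some nonzero weight $w$, with two such 2-dimensional modules being equivalent iff their weights agree up to sign. Thus the whole lemma is a weight-counting exercise once the weights on each $\fr{m}_i$ are identified.

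First I would observe from \eqref{adjon1} that on $\fr{m}_1$, $\fr{m}_2$, $\fr{m}_3$ the element $e_9$ acts as a rotation of respective nonzero weights $k$, $\ell$, $m$. Each of these modules is therefore irreducible over $\bb{R}$ (a nontrivial rotation of $\bb{R}^2$ has no real eigenvector), and since $k>\ell>m>0$ the three weights are pairwise distinct, so no two of $\fr{m}_1,\fr{m}_2,\fr{m}_3$ can be $\Ad(H)$-equivalent. The same equation \eqref{adjon1} shows $\ad(e_9)e_7=\ad(e_9)e_8=0$, hence $\fr{m}_0$ is a trivial 2-dimensional $\fr{h}$-module.

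Given triviality of the action on $\fr{m}_0$, every one-dimensional subspace of $\fr{m}_0$ is $\Ad(H)$-invariant. In particular $\fr{m}_4=\Span_{\bb{R}}\{e_7\}$ and $\fr{m}_5=\Span_{\bb{R}}\{e_8\}$ are invariant irreducible submodules, both are trivial 1-dimensional, and are therefore mutually equivalent. They cannot be equivalent to any $\fr{m}_i$ for $i=1,2,3$ for dimensional reasons (equivalently, because their weight $0$ differs from $k,\ell,m$). The same observation that every line in $\fr{m}_0$ is invariant immediately yields non-uniqueness of the finer decomposition: for any $(\alpha,\beta)\in\bb{R}^2\setminus\{0\}$ not parallel to $(1,0)$ or $(0,1)$, the splitting $\fr{m}_0=\bb{R}(\alpha e_7+\beta e_8)\oplus\bb{R}(\beta e_7-\alpha e_8)$ is an alternative orthogonal $\Ad(H)$-invariant refinement.

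No serious obstacle arises: the argument is purely a matter of inspecting eigenvalues in \eqref{adjon1}, and the hypothesis $k>\ell>m>0$ is used only to ensure that the three weights are all nonzero and pairwise distinct. The sole conceptual point worth underlining is that the non-uniqueness is forced precisely by the triviality of $\Ad(H)$ on the two-dimensional piece $\fr{m}_0$, i.e.\ by the fact that $\mathrm{End}_{\Ad(H)}(\fr{m}_0)=\mathrm{End}(\fr{m}_0)$ is the full matrix algebra.
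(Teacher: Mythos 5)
Your proposal is correct and follows essentially the same route as the paper: both read off from \eqref{adjon1} that $\ad(e_9)$ acts by rotations of pairwise distinct nonzero weights $k,\ell,m$ on $\fr{m}_1,\fr{m}_2,\fr{m}_3$ and trivially on $\fr{m}_0$, and both derive the non-uniqueness of the refinement from the fact that $\Ad(H)|_{\fr{m}_0}=\Id$, so that any pair of orthogonal lines in $\fr{m}_0$ works.
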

\begin{proof}
By (\ref{adjon1}), we see that the weights of the adjoint action of $H$ on $\fr{m}\cong\bb{R}^{8}$ are
\[
\left(\exp \left(\pm2k \sqrt{-1}\varphi\right),~\exp\left(\pm2\ell \sqrt{-1}\varphi\right),~\exp\left(\pm 2m\sqrt{-1}\varphi\right),~1,~1\right), \quad \varphi\in[0, 2\pi].
\]
Since the stability group $H$ acts with different weights on  $\fr{m}_1, \fr{m}_2$, $\fr{m}_3,$ whenever $k\neq\ell\neq m\neq0$,
it follows that $\fr{m}_1$, $\fr{m}_2$, and $\fr{m}_3$ are mutually inequivalent.
Now, although $\fr{m}_{0}$ is irreducible under the adjoint action of ${\rm T}^{2}_{k, \ell, m}$, it decomposes into two equivalent $\Ad(H)$-invariant submodules, which are generated by $e_7$ and $e_8$, respectively.
Moreover, since $[\fr{h}, \fr{m}_{0}]=0$, we see that $\Ad(H)|_{\fr{m}_{0}}=\Id$. Hence, $\fr{m}_4$ and $\fr{m}_5$ can be replaced by any pair of orthogonal 1-dimensional submodules in $\fr{m}_{0}$.
\end{proof}

Fix some angle $\theta\in [0, 2\pi]$, consider the vectors
\[
e^{\theta}_7\coloneqq\cos(\theta) \, e_7+\sin(\theta) \,  e_8, \quad e^{\theta}_8\coloneqq-\sin(\theta) \,  e_7+\cos(\theta) \,  e_8,
\]
and let $\fr{m}_4^{\theta}\coloneqq\mathrm{span}_{\bb{R}}\{e^\theta_7\}$, $\fr{m}_5^{\theta}\coloneqq\mathrm{span}_{\bb{R}}\{e^\theta_8\}$.
Then, the $H$-module $\fr{m}_{0}^{\theta}\coloneqq \fr{m}_4^{\theta}\oplus\fr{m}_5^{\theta}$ is equivalent to $\fr{m}_{0}=\fr{m}_4\oplus\fr{m}_5$, and
the pair $\left\{e_7^\theta,e_8^\theta\right\}$ is an orthonormal basis of $(\fr{m}^{\theta}_0, \langle \cdot , \cdot \rangle)$.
Now, as a consequence of  Lemma \ref{men}, we obtain the following.
\begin{prop}\label{nik}  Assume that $k>\ell> m>0$.  Let  $Q$ be the normal metric on $C_{k, \ell, m}$ induced by the bi-invariant metric $\langle A, B \rangle=-2\tr(AB)$ on $\fr{g}$.
Then, up to scaling, any $G$-invariant metric on $C_{k, \ell, m}$ is given by
\[
 ( \ , \ )_{y_1,y_2,y_3,y_4, y_5, \theta} \coloneqq y_{1}^{2}\, Q|_{\fr{m}_1}+y_{2}^{2}\, Q|_{\fr{m}_2}+y_{3}^{2}\, Q|_{\fr{m}_3}+y_{4}^{2}\, Q|_{\fr{m}_4^{\theta}}+y_{5}^{2}\, Q|_{\fr{m}_5^{\theta}},
\]
for some positive real numbers $y_1, \ldots, y_5$. Thus, the space of $G$-invariant metrics on $C_{k, \ell, m}$, with $k>\ell>m>0$, is 6-dimensional.
\end{prop}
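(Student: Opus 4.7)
The plan is to use the standard correspondence between $G$-invariant Riemannian metrics on $G/H$ and $\Ad(H)$-invariant symmetric positive-definite bilinear forms on $\fr{m}$, and then exploit the isotypic decomposition of $\fr{m}$ supplied by Lemma \ref{men}. Under the hypothesis $k>\ell>m>0$, the module $\fr{m}$ splits as $\fr{m}_1\oplus\fr{m}_2\oplus\fr{m}_3\oplus\fr{m}_0$ into four pairwise inequivalent isotypic components: three real irreducible 2-dimensional rotation modules on which $H\cong\U(1)$ acts with distinct nonzero weights $k,\ell,m$, and the 2-dimensional trivial component $\fr{m}_0$.

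The next step is to apply Schur's lemma in the real setting to conclude that any $\Ad(H)$-invariant symmetric bilinear form on $\fr{m}$ is block-diagonal with respect to this decomposition, so no cross-terms between distinct isotypic summands can occur. On each $\fr{m}_i$ with $i\in\{1,2,3\}$, the rotation action admits no real invariant line, hence any invariant symmetric bilinear form is a positive scalar multiple of $Q|_{\fr{m}_i}$; this contributes exactly one parameter $y_i^2>0$ per summand. On $\fr{m}_0$, where $\Ad(H)$ acts trivially, every symmetric bilinear form is $\Ad(H)$-invariant, so the positive-definite invariant forms on $\fr{m}_0$ form the 3-dimensional open cone of positive-definite symmetric $2\times 2$ matrices.

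The last step is to bring the $\fr{m}_0$-block into the normal form $y_4^2\,Q|_{\fr{m}_4^\theta}+y_5^2\,Q|_{\fr{m}_5^\theta}$ of the statement. This is obtained by the spectral theorem applied to the form with respect to $Q|_{\fr{m}_0}$: diagonalizing produces a $Q$-orthonormal eigenbasis $\{e_7^\theta,e_8^\theta\}$ related to $\{e_7,e_8\}$ by a rotation of angle $\theta\in[0,2\pi]$, with positive eigenvalues $y_4^2$ and $y_5^2$. Assembling the three scalars $y_1,y_2,y_3$ on the rotation modules with the two eigenvalues $y_4,y_5$ and the angle $\theta$ yields the claimed 6-parameter family.

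The only point that requires care, and which is in fact the crux of the counting, is the observation that the angle $\theta$ is a genuine modulus rather than an artifact of the parametrization: because $\Ad(H)$ acts trivially on $\fr{m}_0$, this component has no canonical decomposition into $\Ad(H)$-invariant lines, and distinct splittings $\fr{m}_0=\fr{m}_4^\theta\oplus\fr{m}_5^\theta$ are not related by any $\Ad(H)$-equivariant rescaling of the invariant metric. Consequently $\mathrm{Sym}^2(\fr{m}_0^*)^H$ contributes its full $3$ real parameters to the moduli, yielding the total dimension $3+3=6$ asserted in the proposition.
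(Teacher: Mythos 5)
Your argument is correct and follows essentially the same route as the paper, which deduces the proposition directly from the isotypic decomposition in Lemma \ref{men} (pairwise inequivalent nontrivial 2-dimensional rotation modules $\fr{m}_1,\fr{m}_2,\fr{m}_3$ plus the trivial 2-dimensional summand $\fr{m}_0$) via Schur's lemma; you have merely written out the details the paper leaves implicit, including the diagonalization of the $\fr{m}_0$-block that produces the angle $\theta$ and the count $3+3=6$.
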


Let us now examine the spaces of invariant differential forms on $C_{k,\ell,m}$.
By the general theory of C-spaces described in \cite{ACh}, we can construct  an equivariant isomorphism
\[
\phi :\fr{u}(1)_{k, \ell, m}=Z_{\fr{g}}(\fr{u}(1)_{k, \ell, m})\to H^{2}(M, \bb{R}),
\]
from which we deduce that $H^{2}(C_{k, \ell, m}, \bb{Z})\cong\bb{Z}$. This also follows from the exact sequence  (see \cite{ACh, Klaus})
\[
H^{1}({\rm T}^{2}, \bb{Z})\cong\bb{Z}^{2}\longrightarrow H^{2}(N^{6}, \bb{	Z})\cong\bb{Z}^{3}\longrightarrow H^{2}(M, \bb{Z}).
\]
In more geometric terms,  any element $\xi\in \fr{m}_{0}+C_{\fr{p}}(\fr{h})$, where $C_{\fr{p}}(\fr{h})$ denotes the centralizer of  the stability subalgebra $\fr{h}$
in $\fr{p}=T_{\mathsf{c}(o)}N^{6}=T_{\mathsf{c}(o)}G/{\rm T}^{3}$, induces  an invariant 1-form form $\tilde{\xi}$ and moreover an exact invariant 2-form $\dd\tilde\xi$ on $C_{k, \ell, m}$.
Since $C_{\fr{p}}(\fr{h})$ is trivial,  there is  a bijection
\[
\xi\in \fr{m}_{0} \ \longmapsto \ \tilde{\xi}\in(\Lambda^{1}\fr{m}_{0}^{*})^{H}\cong (\fr{m}_{0}^{*})^{H}\cong\fr{m}_{0}^{H}.
\]
 This means that $ \chi_{*}(e_9)(e^7)=0,$  $\chi_{*}(e_9)(e^{8})=0$, i.e., the  dual 1-forms $e^{7}, e^{8}$ of  $e_7, e_8$  induce $G$-invariant 1-forms on $C_{k, \ell, m}$.  The same holds true for
the  dual 1-forms
\[
e_{\theta}^{7}=\cos(\theta) \,  e^{7}+\sin(\theta)\, e^{8}, \quad e_{\theta}^{8}=-\sin(\theta) \, e^{7}+\cos(\theta)\, e^{8},
\]
and their wedge product satisfies the relation  $e_{\theta}^{7}\wedge e_{\theta}^{8}=e^{7}\wedge e^{8}$. On the other hand,  (\ref{adjon1}) gives
\begin{eqnarray*}
\chi_{*}(e_9)e^{1}&=&\sum_{j=1}^{8}\big(\chi_{*}(e_9)e^{1}\big)(e_j)e^{j}=-\sum_{j}e^{1}\big(\chi_{*}(e_9)e_{j}\big)e^{j}=-e^{1}\big(\chi_{*}(e_9)e_2\big)e^{2}=-ke^{2},
\end{eqnarray*}
and, similarly,
\begin{equation}
\chi_{*}(e_9)e^{2}=ke^{1}, \ \chi_{*}(e_9)e^{3}=-\ell e^{4}, \  \chi_{*}(e_9)e^{4}=\ell e^{3}, \  \chi_{*}(e_9)e^{5}=-m e^{6}, \  \chi_{*}(e_9)e^{6}=me^{5}.
\end{equation}
Thus, a basis for the space of invariant 1-forms on $C_{k, \ell, m}$, when $k>\ell>m>0$, is given by $\{e^7,e^8\}.$

For the  invariant 2-forms on $C_{k, \ell, m}$, we obtain the following.
\begin{lemma}\label{2forms}
If the integers $k, \ell, m$ satisfy $k>\ell> m>0$, then the space of $G$-invariant 2-forms on $C_{k, \ell, m}$  is 4-dimensional and it is generated by the 2-forms $\{e^{12}, e^{34}, e^{56}, e^{78}\}$.
\end{lemma}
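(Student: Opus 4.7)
The plan is to pass to the algebraic side: a $G$-invariant $2$-form on $C_{k,\ell,m}=G/H$ is the same as an $\mathrm{Ad}(H)$-invariant element of $\Lambda^{2}\fr{m}^{*}$, and to compute this invariant subspace piece by piece using the orthogonal $\mathrm{Ad}(H)$-decomposition $\fr{m}=\fr{m}_{1}\oplus\fr{m}_{2}\oplus\fr{m}_{3}\oplus\fr{m}_{0}$ of Lemma \ref{men}. I would first write down the induced decomposition
\[
\Lambda^{2}\fr{m}^{*}=\bigoplus_{i=1}^{3}\Lambda^{2}\fr{m}_{i}^{*}\,\oplus\,\Lambda^{2}\fr{m}_{0}^{*}\,\oplus\bigoplus_{1\le i<j\le 3}\fr{m}_{i}^{*}\wedge\fr{m}_{j}^{*}\,\oplus\bigoplus_{i=1}^{3}\fr{m}_{i}^{*}\wedge\fr{m}_{0}^{*}
\]
and analyse the $H$-invariants of each summand separately.

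For the diagonal pieces, each $\fr{m}_{i}$ ($i=1,2,3$) is a real $2$-dimensional $H$-module on which $\U(1)$ acts by rotation with nonzero weight $k,\ell,m$ respectively; such a rotation preserves the volume form, so $\Lambda^{2}\fr{m}_{i}^{*}$ is the trivial $H$-module generated by $e^{2i-1,2i}$. Since $\fr{m}_{0}$ is a trivial $\mathrm{Ad}(H)$-module (recall $[\fr{h},\fr{m}_{0}]=0$), $\Lambda^{2}\fr{m}_{0}^{*}$ is $1$-dimensional and generated by $e^{78}$. This produces the four candidate generators $\{e^{12},e^{34},e^{56},e^{78}\}$, and the point will be to show that the cross terms contribute nothing.

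For the mixed pieces, I would use Schur-type weight arguments. Complexifying and diagonalising the $\fr{u}(1)_{k,\ell,m}$-action, the module $\fr{m}_{i}^{*}\wedge\fr{m}_{j}^{*}$ with $1\le i<j\le 3$ has weights $\pm k_{i}\pm k_{j}$, where $k_{i},k_{j}\in\{k,\ell,m\}$ are distinct positive integers; the strict inequalities $k>\ell>m>0$ guarantee that none of these weights vanishes, hence $(\fr{m}_{i}^{*}\wedge\fr{m}_{j}^{*})^{H}=0$. For the terms $\fr{m}_{i}^{*}\wedge\fr{m}_{0}^{*}$ with $i\in\{1,2,3\}$, triviality of $\fr{m}_{0}$ implies that, as an $H$-module, this factor is isomorphic to $\fr{m}_{i}^{*}\oplus\fr{m}_{i}^{*}$, which has no invariants because $\fr{m}_{i}$ is nontrivial irreducible. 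Adding up the contributions yields $\dim(\Lambda^{2}\fr{m}^{*})^{H}=3+1+0+0=4$, proving the claim.

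The only subtle point is the weight bookkeeping in the off-diagonal terms: one must make sure no accidental coincidence of the form $k_{i}=\pm k_{j}$ occurs, but the hypothesis $k>\ell>m>0$ is tailored precisely to rule this out, so this is not really an obstacle but the place where the hypothesis is actually used.
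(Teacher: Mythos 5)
Your proposal is correct and follows essentially the same route as the paper: both decompose $\Lambda^{2}\fr{m}^{*}$ according to the $\mathrm{Ad}(H)$-invariant splitting $\fr{m}=\fr{m}_{1}\oplus\fr{m}_{2}\oplus\fr{m}_{3}\oplus\fr{m}_{0}$, identify $e^{12},e^{34},e^{56},e^{78}$ as the invariants of the diagonal pieces, and kill the mixed pieces. The only difference is presentational: where the paper verifies the vanishing of the cross terms by explicitly listing the action of $\chi_{*}(e_{9})$ on the basis $2$-forms, you read off the same conclusion from the nonvanishing of the weights $\pm k_{i}\pm k_{j}$ and from the irreducibility of $\fr{m}_{i}$ in $\fr{m}_{i}^{*}\wedge\fr{m}_{0}^{*}\cong\fr{m}_{i}^{*}\oplus\fr{m}_{i}^{*}$ --- an equivalent, slightly more conceptual bookkeeping of the same computation.
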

\begin{proof}
By the orthogonal decomposition $  \fr{m}=\fr{p}\oplus\fr{m}_{0}=\fr{m}_1\oplus\fr{m}_2\oplus\fr{m}_3\oplus\fr{m}_0$, one  obtains the orthogonal decomposition
\[
(\Lambda^{2}\fr{m}^{*})^{H}=\big(\Lambda^{2}(\fr{p}\oplus\fr{m}_{0})^{*}\big)^{H}=(\Lambda^{2}\fr{p}^{*})^{H}\oplus(\fr{p}^{*}\wedge\fr{m}_{0}^{*})^{H}\oplus(\Lambda^{2}\fr{m}_{0}^{*})^{H}\,,
\]
where
$(\Lambda^{2}\fr{p}^{*})^{H}=(\Lambda^{2}\fr{m}_{1}^{*})^{H}\oplus(\Lambda^{2}\fr{m}_{2}^{*})^{H}\oplus(\Lambda^{2}\fr{m}_{3}^{*})^{H}\oplus(\fr{m}_1^{*}\wedge\fr{m}_2^{*})^{H}\oplus(\fr{m}_1^{*}\wedge\fr{m}_{3}^{*})^{H}
\oplus(\fr{m}^{*}_2\wedge\fr{m}^{*}_3)^{H}\,.$
It is easy to see that the 2-forms  $e^{12}$, $e^{34}$, $e^{56}$, and  $e^{78}$ are $\Ad(H)$-invariant. In particular
\[
(\Lambda^{2}\fr{m}_{1}^{*})^{H}=\Span_{\bb{R}}\{e^{12}\},  \, (\Lambda^{2}\fr{m}_{2}^{*})^{H}= \Span_{\bb{R}}\{e^{34}\},  \, (\Lambda^{2}\fr{m}_{3}^{*})^{H}=\Span_{\bb{R}}\{e^{56}\}, \,
(\Lambda^{2}\fr{m}_{0}^{*})^{H}=\Span_{\bb{R}}\{e^{78}\}.
\]
\noindent  Moreover, the $H$-module  $(\fr{p}^{*}\wedge\fr{m}_{0}^{*})^{H}\cong (\fr{p}\wedge\fr{m}_{0})^{H}$ vanishes if and only if the triple $(k, \ell, m)$ is non-zero. 
This follows  from the relations
\[
\begin{array}{llll}
\chi_{*}(e_9)e^{17}=-k e^{27}, 	& \chi_{*}(e_9)e^{47}=\ell e^{37},	& \chi_{*}(e_9)e^{18}=-ke^{28},		& \chi_{*}(e_9)e^{48}=\ell e^{38}, \\
\chi_{*}(e_9)e^{27}=k e^{17},  	& \chi_{*}(e_9)e^{57}=-m e^{67},	& \chi_{*}(e_9)e^{28}=ke^{18},		& \chi_{*}(e_9)e^{58}=-m e^{68}, \\
\chi_{*}(e_9)e^{37}=-\ell e^{47},	& \chi_{*}(e_9)e^{67}=m e^{57},	& \chi_{*}(e_9)e^{38}=-\ell e^{48},	& \chi_{*}(e_9)e^{68}=m e^{58}.
\end{array}
\]
For the remaining mixed terms corresponding to the modules $(\fr{m}_1^{*}\wedge\fr{m}_2^{*})$, $(\fr{m}_1^{*}\wedge\fr{m}_{3}^{*})$ and $(\fr{m}^{*}_2\wedge\fr{m}^{*}_3)$, one similarly computes
\[
\begin{array}{lll}
\chi_{*}(e_9)e^{13}=-ke^{23}-\ell e^{14},	& \chi_{*}(e_9)e^{15}=-ke^{25}-m e^{16},	& \chi_{*}(e_9)e^{35}=-\ell e^{45}-m e^{36}, \\
\chi_{*}(e_9)e^{24}=ke^{14}+\ell e^{23},	& \chi_{*}(e_9)e^{26}=ke^{16}+m e^{25},	& \chi_{*}(e_9)e^{46}=\ell e^{36}+m e^{45}, \\
\chi_{*}(e_9)e^{14}=-ke^{24}+\ell e^{13},	& \chi_{*}(e_9)e^{16}=-ke^{26}+m e^{15},	& \chi_{*}(e_9)e^{36}=-\ell e^{46}+m e^{35},  \\
\chi_{*}(e_9)e^{23}=ke^{13}-\ell e^{24},	& \chi_{*}(e_9)e^{25}=ke^{15}-m e^{26},	& \chi_{*}(e_9)e^{45}=\ell e^{35}-m e^{46}.
\end{array}
\]
Using these results, we see that as long as $k\neq\ell\neq m$  and   $k, \ell, m$ are non-zero, there are no further  $\Ad(H)$-invariant 2-forms.
\end{proof}

\begin{remark}
Up to scaling, the 2-forms $e^{12}, e^{34}, e^{56}$ are the unique $\U(1)$-invariant K\"ahler forms on the corresponding  factors $\SU(2)/\U(1)$ of the product $\Ss^{2}\times\Ss^{2}\times\Ss^{2}$.
By Proposition \ref{2forms}, it follows that the 2-form $\omega_{\fr{m}}\coloneqq\omega_\fr{p}+\omega_{\fr{m}_{0}}=e^{12}+e^{34}+e^{56}+e^{78}$
is the fundamental 2-form  associated to the invariant complex structure $J_{\fr{m}}=J_{\fr{p}}+J_{\fr{m}_{0}}$ discussed in Remark \ref{my11}, with $\omega_{\fr{m}_{0}}=e^{78} $.
Of course, $\omega_{\fr{m}}$ is not  K\"ahler, since ${\rm d} e^{78}\neq 0$ (cf.~Appendix \ref{app-detailsCklm}).
\end{remark}

We can now discuss the invariant 3-forms on $C_{k,\ell, m}$.  As we will see below,  the condition $k=\ell+m$ appears in a natural way.
\begin{lemma}\label{3forms}
Assume that the integers $k, \ell, m$, with $k>\ell>m>0$, satisfy $k-\ell-m=0$.
Then,  the space of $G$-invariant 3-forms on  $C_{\ell+m, \ell, m}$  is 8-dimensional and it  is generated by the 3-forms
\begin{equation}\label{3forms1}
e^{127}, ~e^{128},~e^{347}, ~e^{348},~e^{567},~e^{568},
\end{equation}
and
\begin{equation}\label{3forms2}
\upalpha_1\coloneqq e^{135}-e^{146}+e^{245}+e^{236}, \quad \upalpha_2\coloneqq e^{136}+e^{145}+e^{246}-e^{235}\,.
\end{equation}
\end{lemma}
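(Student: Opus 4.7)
The plan is to decompose $(\Lambda^3\mathfrak{m}^*)^H$ by means of the orthogonal $\mathrm{Ad}(H)$-invariant splitting \eqref{mC}, namely
\[
(\Lambda^3\mathfrak{m}^*)^H = (\Lambda^3\mathfrak{p}^*)^H \oplus (\Lambda^2\mathfrak{p}^* \wedge \mathfrak{m}_0^*)^H \oplus (\mathfrak{p}^* \wedge \Lambda^2\mathfrak{m}_0^*)^H,
\]
since $\Lambda^3\mathfrak{m}_0^*=0$. Each factor $\Lambda^\bullet\mathfrak{p}^*$ would then be further refined using $\mathfrak{p}=\mathfrak{m}_1\oplus\mathfrak{m}_2\oplus\mathfrak{m}_3$, and the invariants located summand by summand.

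First, I would collect the ``easy'' invariants. Since $\mathrm{Ad}(H)$ acts trivially on $\mathfrak{m}_0$, both $e^7$ and $e^8$ are invariant 1-forms and $e^{78}$ is an invariant 2-form. By Lemma \ref{2forms}, the only invariants in $(\Lambda^2\mathfrak{p}^*)^H$ are spanned by $e^{12}$, $e^{34}$, $e^{56}$. Taking wedges immediately produces the six forms in \eqref{3forms1}, which exhaust $\bigoplus_{i=1}^{3}(\Lambda^2\mathfrak{m}_i^*\wedge\mathfrak{m}_0^*)^H$.

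Next, I would show that all other ``mixed'' pieces vanish. The modules $(\mathfrak{m}_i^*\wedge\mathfrak{m}_j^*\wedge\mathfrak{m}_0^*)^H$ for $i<j$ vanish because their mixed 2-form part already failed to admit invariants in the proof of Lemma \ref{2forms}. The summand $(\mathfrak{p}^*\wedge\Lambda^2\mathfrak{m}_0^*)^H$ is isomorphic to $(\mathfrak{p}^*)^H$, which is trivial since each $\mathfrak{m}_i$ is rotated non-trivially by $\chi_*(e_9)$. Within $\Lambda^3\mathfrak{p}^*$, the pieces $\Lambda^2\mathfrak{m}_i^*\wedge\mathfrak{m}_j^*$ with $i\neq j$ contain no invariants for the same reason: the invariant 2-form factor is fixed while $\mathfrak{m}_j^*$ has no fixed line.

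The crux is therefore the analysis of $\mathcal{V}\coloneqq \mathfrak{m}_1^*\wedge\mathfrak{m}_2^*\wedge\mathfrak{m}_3^*$, an 8-dimensional $H$-module spanned by the monomials $e^{i_1 i_2 i_3}$ with $i_1\in\{1,2\}$, $i_2\in\{3,4\}$, $i_3\in\{5,6\}$. As an $H$-module, $\mathcal{V}=\mathfrak{m}_1^*\otimes\mathfrak{m}_2^*\otimes\mathfrak{m}_3^*$, on each factor of which $\chi_*(e_9)$ acts by rotation with weight $k$, $\ell$, $m$ respectively. Hence, upon complexification, the eigenvalues of $\chi_*(e_9)|_{\mathcal{V}}$ are $\sqrt{-1}(\varepsilon_1 k+\varepsilon_2\ell+\varepsilon_3 m)$ with $\varepsilon_j\in\{\pm 1\}$. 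Under $k\geq\ell\geq m>0$, some signed sum vanishes if and only if $k-\ell-m=0$, and in that case the zero-eigenspace is two-dimensional. Solving the linear system $\chi_*(e_9)\alpha=0$ directly with the derivation rule and the explicit formulas for $\chi_*(e_9)e^i$ established after Lemma \ref{2forms} recovers the basis $\{\upalpha_1,\upalpha_2\}$ in \eqref{3forms2}. Adding the six forms of \eqref{3forms1} and the two of \eqref{3forms2} yields an $8$-dimensional space of invariant 3-forms, as claimed. The main technical obstacle is the bookkeeping in this linear system: it is conceptually transparent through the tensor product picture, but reducing the $8\times 8$ matrix of $\chi_*(e_9)$ to the explicit 2-dimensional kernel requires a careful sign computation.
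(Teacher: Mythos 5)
Your proposal follows essentially the same route as the paper: the same orthogonal decomposition of $(\Lambda^3\fr{m}^*)^H$ induced by $\fr{m}=\fr{m}_1\oplus\fr{m}_2\oplus\fr{m}_3\oplus\fr{m}_0$, the same six invariants obtained by wedging $e^{12},e^{34},e^{56}$ with $e^7,e^8$, and the same reduction of the problem to the module $\fr{m}_1^*\wedge\fr{m}_2^*\wedge\fr{m}_3^*$. Your weight/eigenvalue argument for $\chi_*(e_9)$ on the complexification is a slightly cleaner way to see that the invariant subspace there is exactly $2$-dimensional precisely when $k-\ell-m=0$, whereas the paper verifies directly that $\chi_*(e_9)\upalpha_1=(k-\ell-m)\upalpha_2$ and $\chi_*(e_9)\upalpha_2=-(k-\ell-m)\upalpha_1$; both are correct.
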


\begin{proof}
Since $\fr{m}=\fr{p}\oplus\fr{m}_{0}$, we obtain
\[
(\Lambda^{3}\fr{m}^{*})^{H}=\Big(\Lambda^{3}(\fr{p}\oplus\fr{m}_0)^{*}\Big)^{H}=(\Lambda^{3}\fr{p}^{*})^{H}\oplus(\Lambda^{2}\fr{p}^{*}\wedge\fr{m}^{*}_{0})^{H}\oplus(\fr{p}^{*}\wedge\Lambda^{2}\fr{m}_{0}^{*})^{H}\,.
\]
The  3-forms  given  by (\ref{3forms1}) are invariant,   independently of   the condition $k-\ell-m=0$. They occur by considering the wedge product of the invariant 2-forms
$e^{12}, e^{34}, e^{56}$  with the elements $e^7$ and  $e^8$, which span $(\fr{m}_{0}^{*})^{H}\cong(\fr{m}_{0})^{H}=\fr{m}_{0}$.
Thus, they span the 6-dimensional  factor $(\Lambda^{2}\fr{p}^{*}\wedge\fr{m}_{0}^{*})^{H}$.
We mention that the  invariant 3-forms $e^{ij7}_{\theta}=e^{ij}\wedge e^{7}_{\theta}$ and $e^{ij8}_{\theta}=e^{ij}\wedge e^{8}_{\theta}$, for $ij\in\{12, 34, 56\},$ do not induce new forms.
Now, for generic non-zero $k, \ell, m,$ the third module $(\fr{p}^{*}\wedge\Lambda^{2}\fr{m}_{0}^{*})^{H}$  vanishes, since
\[
\begin{array}{lll}
\chi_{*}(e_9)e^{178}=-ke^{278}, 	&  \chi_{*}(e_9)e^{278}=ke^{178}, 	&  \chi_{*}(e_9)e^{378}=-\ell e^{478}, \\
\chi_{*}(e_9)e^{478}=\ell e^{378}, 	&  \chi_{*}(e_9)e^{578}=-m e^{478}, 	&  \chi_{*}(e_9)e^{678}=m e^{578}.
\end{array}
\]
Let us now consider the first factor.  Due to the orthogonal decomposition $\fr{p}=\fr{m}_1\oplus\fr{m}_2\oplus\fr{m}_3$, one has
\begin{eqnarray*}\label{DEC3P}
(\Lambda^{3}\fr{p}^{*})^{H}&=&(\Lambda^{2}\fr{m}_{1}^{*}\wedge\fr{m}_2^{*})^{H}\oplus(\Lambda^{2}\fr{m}_{1}^{*}\wedge\fr{m}_3^{*})^{H}\oplus(\Lambda^{2}\fr{m}_{2}^{*}\wedge\fr{m}_3^{*})^{H}\nonumber\\
&&\oplus(\fr{m}_1^{*}\wedge\Lambda^{2}\fr{m}_{2}^{*})^{H}\oplus(\fr{m}_1^{*}\wedge\Lambda^{2}\fr{m}_{3}^{*})^{H}\oplus(\fr{m}_2^{*}\wedge\Lambda^{2}\fr{m}_{3}^{*})^{H}
\oplus(\fr{m}_1^{*}\wedge\fr{m}_2^{*}\wedge\fr{m}_3^{*})^{H}\,.
\end{eqnarray*}
First, we show that for non-zero $k, \ell, m,$ the first 6 modules vanish. Indeed, note that
\[
\begin{array}{ll}
\Lambda^{2}\fr{m}_{1}^{*}\wedge\fr{m}_2^{*}=\Span_{\bb{R}}\{e^{123}, e^{124}\}\,, & \fr{m}_1^{*}\wedge\Lambda^{2}\fr{m}_{2}^{*}=\Span_{\bb{R}}\{e^{134}, e^{234}\}\,,\\
\Lambda^{2}\fr{m}_{1}^{*}\wedge\fr{m}_3^{*}=\Span_{\bb{R}}\{e^{125}, e^{126}\}\,,& \fr{m}_1^{*}\wedge\Lambda^{2}\fr{m}_{3}^{*}=\Span_{\bb{R}}\{e^{156}, e^{256}\}\,,\\
\Lambda^{2}\fr{m}_{2}^{*}\wedge\fr{m}_3^{*}=\Span_{\bb{R}}\{e^{345}, e^{346}\}\,,&
\fr{m}_2^{*}\wedge\Lambda^{2}\fr{m}_{3}^{*}=\Span_{\bb{R}}\{e^{356}, e^{456}\}\,.
\end{array}
\]
Computing the action of $\chi_* (e_9)$ on the $3$-forms appearing above,
we see that for non-zero $k, \ell, m,$ none of these  3-forms, or any linear combination of them, belong to the kernel of the isotropy action of $H$.
Let us finally prove that  the condition  $k-\ell-m=0$ is equivalent to
\[
(\fr{m}_1^{*}\wedge\fr{m}_2^{*}\wedge\fr{m}_3^{*})^{H}=\Span_{\bb{R}}\{\upalpha_1, \upalpha_2\}.
\]
This module is independent of the rotation that one may apply to $\fr{m}_{0}$. Moreover,
$
\fr{m}_1^{*}\wedge\fr{m}_2^{*}\wedge\fr{m}_3^{*}=\Span_{\bb{R}}\{e^{135}, e^{145}, e^{136}, e^{146}, e^{235}, e^{245}, e^{236}, e^{246}\}
$
and we obtain
\[
\chi_{*}(e_9)(\upalpha_1)=(k-\ell-m)\upalpha_2, \quad \chi_{*}(e_9)(\upalpha_2)=-(k-\ell-m)\upalpha_1\,,
\]
which shows that $\upalpha_1, \upalpha_2\in(\Lambda^{3}\fr{m}^{*})^{H}$ if and only if  $k-\ell-m=0.$
Finally, it is easy to see that these invariant 3-forms constitute a basis of $(\Lambda^{3}\fr{m}^{*})^{H}$ if and only if $k=\ell+m$.
\end{proof}

Combining  Lemmas \ref{2forms} and \ref{3forms} allows us to describe the invariant 4-forms on  $C_{\ell+m, \ell, m}$.
\begin{lemma}\label{4forms}
Assume that the integers $k, \ell, m,$ satisfy $k-\ell-m=0$, with $k>\ell>m>0$.
Then, the space of $G$-invariant 4-forms  on  $C_{\ell+m, \ell, m}$ is 10-dimensional, and it is generated by the 4-forms
\[
e^{1234},~e^{1256},~e^{1278},~e^{5678},~e^{3478},~e^{3456},
\]
and
\[
\upbeta_1\coloneqq\upalpha_1\wedge e^{7}, \quad \upbeta_2 \coloneqq \upalpha_2\wedge e^{8}, \quad \upzeta_1\coloneqq \upalpha_2\wedge e^{7},  \quad \upzeta_2\coloneqq-\upalpha_1\wedge e^{8}
\]
where $\upalpha_1$ and $\upalpha_2$ are the invariant 3-forms described in (\ref{3forms2}).
Note that $\upbeta_2$ and $\upzeta_2$ are obtained by applying the Hodge star operator of the bi-invariant metric $\langle \cdot , \cdot \rangle$ to $\upbeta_1$ and $\upzeta_1$, respectively.
\end{lemma}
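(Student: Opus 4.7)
The plan is to mimic the strategy of Lemmas \ref{2forms} and \ref{3forms}, decomposing $\Lambda^4\fr{m}^*$ according to the $\Ad(H)$-invariant splitting $\fr{m}=\fr{p}\oplus\fr{m}_0$ and then computing invariants in each summand. Since $\dim\fr{m}_0=2$, the higher exterior powers $\Lambda^k\fr{m}_0^*$ vanish for $k\geq3$, so the decomposition reads
\[
(\Lambda^4\fr{m}^*)^H=(\Lambda^4\fr{p}^*)^H\oplus(\Lambda^3\fr{p}^*\wedge\fr{m}_0^*)^H\oplus(\Lambda^2\fr{p}^*\wedge\Lambda^2\fr{m}_0^*)^H.
\]
I would handle the two ``easy'' summands first, exploiting the results already established.

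For the third summand, $\Lambda^2\fr{m}_0^*=\Span_{\bb{R}}\{e^{78}\}$ is pointwise $H$-invariant because $[\fr{h},\fr{m}_0]=0$, so this module reduces to $(\Lambda^2\fr{p}^*)^H\otimes e^{78}$. By the proof of Lemma \ref{2forms}, $(\Lambda^2\fr{p}^*)^H=\Span_{\bb{R}}\{e^{12},e^{34},e^{56}\}$, contributing the three invariants $e^{1278},e^{3478},e^{5678}$. For the middle summand, the triviality of the $H$-action on $\fr{m}_0^*$ gives $(\Lambda^3\fr{p}^*\wedge\fr{m}_0^*)^H=(\Lambda^3\fr{p}^*)^H\otimes\fr{m}_0^*$, and by Lemma \ref{3forms} (whose invariants involving $\fr{m}_0^*$ were separated out) we have $(\Lambda^3\fr{p}^*)^H=\Span_{\bb{R}}\{\upalpha_1,\upalpha_2\}$ precisely when $k=\ell+m$. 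Wedging with $e^7,e^8$ therefore produces a 4-dimensional space with basis $\upbeta_1,\upzeta_2,\upzeta_1,\upbeta_2$.

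The main work is then the first summand $(\Lambda^4\fr{p}^*)^H$. I would split it further according to the triple decomposition $\fr{p}=\fr{m}_1\oplus\fr{m}_2\oplus\fr{m}_3$ into the ``pure'' blocks $\Lambda^2\fr{m}_i^*\wedge\Lambda^2\fr{m}_j^*$ (for distinct $i,j$) and the ``mixed'' blocks $\Lambda^2\fr{m}_i^*\wedge\fr{m}_j^*\wedge\fr{m}_k^*$. Each pure block is one-dimensional and spanned by the wedge of the two invariant symplectic forms, yielding $e^{1234},e^{1256},e^{3456}$. For each mixed block the invariant $\Lambda^2\fr{m}_i^*$ factor drops out, reducing the question to whether $(\fr{m}_j^*\wedge\fr{m}_k^*)^H=0$. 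Viewing these as complex $\U(1)$-representations, the weights on $\fr{m}_j^*\otimes\fr{m}_k^*$ are $\pm\alpha_j\pm\alpha_k$, where $(\alpha_1,\alpha_2,\alpha_3)=(\ell+m,\ell,m)$; under the assumption $k>\ell>m>0$ none of these weights vanishes, so the mixed blocks contribute nothing. Adding up gives $3+4+3=10$, matching the asserted dimension.

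Finally, the Hodge-star identities $\star\upbeta_1=\upbeta_2$ and $\star\upzeta_1=\upzeta_2$ are a direct computation, term by term, using that $\{e_1,\ldots,e_8\}$ is orthonormal with respect to $\langle\cdot,\cdot\rangle$ and the volume form is $e^{12345678}$: one checks the signs of the eight relevant $(4,4)$-shuffles arising from the expansions of $\upalpha_1\wedge e^7$ and $\upalpha_2\wedge e^7$, and verifies that the complementary 4-forms regroup as $\upalpha_2\wedge e^8$ and $-\upalpha_1\wedge e^8$ respectively. The only point that requires attention is bookkeeping of signs in these shuffles; the irreducibility/weight argument for the mixed blocks in $(\Lambda^4\fr{p}^*)^H$ is the conceptually delicate step, but once the condition $\ell\neq m$ (guaranteed by $\ell>m>0$) is invoked it is straightforward.
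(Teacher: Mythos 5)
Your proposal is correct and follows essentially the same route as the paper: the same splitting $(\Lambda^4\fr{m}^*)^H=(\Lambda^4\fr{p}^*)^H\oplus(\Lambda^3\fr{p}^*\wedge\fr{m}_0^*)^H\oplus(\Lambda^2\fr{p}^*\wedge\Lambda^2\fr{m}_0^*)^H$, the same reuse of Lemmas \ref{2forms} and \ref{3forms} for the last two summands, and the same block decomposition of $(\Lambda^4\fr{p}^*)^H$. The only (cosmetic) difference is that you kill the mixed blocks $\Lambda^2\fr{m}_i^*\wedge\fr{m}_j^*\wedge\fr{m}_k^*$ by a clean weight argument (the weights $\pm\alpha_j\pm\alpha_k$ with $(\alpha_1,\alpha_2,\alpha_3)=(\ell+m,\ell,m)$ never vanish), where the paper simply invokes a direct computation.
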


\begin{proof}  By the splitting $\fr{m}=\fr{p}\oplus\fr{m}_0$, we obtain the $\Ad(H)$-invariant orthogonal decomposition
\begin{equation}\label{inv44}
(\Lambda^{4}\fr{m}^{*})^{H} = \Big(\Lambda^{4}(\fr{p}\oplus\fr{m}_0)^{*}\Big)^{H}=(\Lambda^{4}\fr{p}^{*})^{H}\oplus(\Lambda^{3}\fr{p}^{*}\wedge\fr{m}_{0}^{*})^{H}
\oplus(\Lambda^{2}\fr{p}^{*}\wedge\Lambda^{2}\fr{m}_{0}^{*})^{H}.
\end{equation}
The module $(\Lambda^{2}\fr{p}^{*}\wedge\Lambda^{2}\fr{m}_{0}^{*})^{H}$  is generated by the invariant  $4$-forms  $e^{1278}, e^{3478}$ and  $e^{5678}$,
which are obtained by wedging the 3-forms given in (\ref{3forms1}) with the generators of $\fr{m}_{0}$.
Consider now the second summand $(\Lambda^{3}\fr{p}^{*}\wedge\fr{m}_{0}^{*})^{H}$.
We can argue in a similar way as we did for $(\Lambda^{3}\fr{p}^{*})^{H}$.
In detail, we know that $\Lambda^{3}\fr{p}^{*}\wedge\fr{m}_{0}^{*}$ splits into a direct sum of seven subspaces, and it is easy to see that
any isomorphism $\fr{m}_{0}^{\theta}\cong\fr{m}_{0}$ does not contribute with further summands.
An inspection of each subspace allows us to conclude that
only one of them contains elements belonging to the kernel of the isotropy action when $k=\ell+m$, namely $\fr{m}_{1}^{*}\wedge\fr{m}_{2}^{*}\wedge\fr{m}_{3}^{*}\wedge\fr{m}_{0}^{*}$.
More precisely, we have
\[
(\fr{m}_{1}^{*}\wedge\fr{m}_{2}^{*}\wedge\fr{m}_{3}^{*}\wedge\fr{m}_{0}^{*})^{H} = (\fr{m}_{1}^{*}\wedge\fr{m}_{2}^{*}\wedge\fr{m}_{3}^{*})^{H}\wedge(\fr{m}_{0}^{*})^{H}
=\Span_{\bb{R}}\{\upalpha_1\wedge e^{7}, \upalpha_1\wedge e^{8}, \upalpha_2\wedge e^{7}, \upalpha_2\wedge e^{8}\}.
\]
This immediately follows from the identities
\[
\renewcommand\arraystretch{1.2}
\begin{array}{ll}
\chi_{*}(e_9)\upbeta_1=(k-\ell-m)\upzeta_1,	& \chi_{*}(e_{9})\upbeta_2=(k-\ell-m)\upzeta_2, \\
\chi_{*}(e_{9})\upzeta_1=-(k-\ell-m)\upbeta_1,	& \chi_{*}(e_{9})\upzeta_2=-(k-\ell-m)\upbeta_2.
\end{array}
\]
To conclude the proof, we have to examine the first module $(\Lambda^{4}\fr{p}^{*})^{H}$ in \eqref{inv44}. It decomposes as follows:
\begin{eqnarray*}
(\Lambda^{4}\fr{p}^{*})^{H}&=&(\Lambda^{2}\fr{m}_{1}^{*}\wedge\Lambda^{2}\fr{m}_{2}^{*})^{H}\oplus(\Lambda^{2}\fr{m}_{1}^{*}\wedge\Lambda^{2}\fr{m}_{3}^{*})^{H}
\oplus(\Lambda^{2}\fr{m}_{2}^{*}\wedge\Lambda^{2}\fr{m}_{3}^{*})^{H} \\
&&\oplus(\Lambda^{2}\fr{m}_{1}^{*}\wedge\fr{m}_2^{*}\wedge\fr{m}_3^{*})^{H}
\oplus(\fr{m}_1^{*}\wedge\Lambda^{2}\fr{m}_{2}^{*}\wedge\fr{m}_3^{*})^{H}\oplus(\fr{m}_1^{*}\wedge\fr{m}_{2}^{*}\wedge\Lambda^{2}\fr{m}_{3}^{*})^{H}\,,
\end{eqnarray*}
where the first three modules are 1-dimensional and they are generated by $e^{1234}$, $e^{1256}$, $e^{3456}$, respectively.
A direct computation shows that the remaining modules are trivial.
\end{proof}

From the above proposition, we obtain the following.
\begin{corol}\label{4frminv}
Consider the homogeneous  spaces $C_{k, \ell, m}$ with $k>\ell>m>0$. Then, the 4-form
\begin{eqnarray}
\Phi &=&e^{1234}+e^{1256}-e^{1278}+e^{1357}+e^{1368}+e^{1458}-e^{1467}\nonumber\\
&&+e^{5678}+e^{3478}-e^{3456}+e^{2468}+e^{2457}-e^{2358}+e^{2367}\label{invmm}
\end{eqnarray}
is invariant if and only if $k-\ell-m=0$. Whenever this condition is satisfied,  $\Phi$ induces an invariant $\Spin(7)$-structure on $C_{\ell+m, \ell, m}$.
\end{corol}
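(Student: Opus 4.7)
The plan is to recognize $\Phi$ as the standard Cayley 4-form, and then decompose it in the basis of invariant 4-forms exhibited in Lemma \ref{4forms} to read off when invariance holds.

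First, I would observe that if one performs the index shift $i \mapsto i-1$ (sending $e^0, \ldots, e^7$ to $e^1, \ldots, e^8$) on the model 4-form $\Phi_{\scriptscriptstyle 0}$ of equation \eqref{4f}, the result is precisely the 4-form $\Phi$ written in \eqref{invmm}. Hence $\Phi$ is pointwise equivalent (via the linear isomorphism $u:\fr{m}\to\bb{R}^8$ sending $e_i \mapsto e_{i-1}$) to the model 4-form $\Phi_{\scriptscriptstyle 0}$, so $\Phi$ is automatically admissible, independent of any constraint on $k, \ell, m$. Consequently, the only nontrivial content of the statement is the invariance criterion.

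Next, I would decompose $\Phi$ according to the splitting $(\Lambda^4\fr{m}^*)^H \supset \text{span of }(\Lambda^4\fr{p}^*)^H, (\Lambda^3\fr{p}^*\wedge\fr{m}_0^*)^H, (\Lambda^2\fr{p}^*\wedge\Lambda^2\fr{m}_0^*)^H$ considered in the proof of Lemma \ref{4forms}. The six terms $e^{1234}, e^{1256}, e^{5678}, e^{3478}, e^{3456}, e^{1278}$ appearing in \eqref{invmm} lie either in $(\Lambda^4\fr{p}^*)^H$ or in $(\Lambda^2\fr{p}^*\wedge\Lambda^2\fr{m}_0^*)^H$, and are always invariant by Lemma \ref{4forms}. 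A direct inspection then shows that the remaining eight terms assemble exactly as
\[
e^{1357}+e^{1368}+e^{1458}-e^{1467}+e^{2468}+e^{2457}-e^{2358}+e^{2367} = \upbeta_1+\upbeta_2,
\]
where $\upbeta_1 = \upalpha_1\wedge e^7$ and $\upbeta_2 = \upalpha_2\wedge e^8$ were defined in Lemma \ref{4forms}. This identification is the only computation of substance, and it is purely bookkeeping once $\upalpha_1$ and $\upalpha_2$ are expanded via \eqref{3forms2}.

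Finally, I would use the transformation formulas
\[
\chi_{*}(e_9)\upbeta_1=(k-\ell-m)\upzeta_1, \qquad \chi_{*}(e_{9})\upbeta_2=(k-\ell-m)\upzeta_2
\]
recorded in the proof of Lemma \ref{4forms}. Since $\upzeta_1$ and $\upzeta_2$ are linearly independent 4-forms, one obtains $\chi_{*}(e_9)(\upbeta_1+\upbeta_2) = (k-\ell-m)(\upzeta_1+\upzeta_2)$, which vanishes if and only if $k-\ell-m = 0$. Combined with the fact that the other six terms are always invariant, this yields invariance of $\Phi$ precisely under the condition $k=\ell+m$. Coupled with admissibility established in the first step, this immediately gives the existence of an invariant $\Spin(7)$-structure on $C_{\ell+m, \ell, m}$. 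I do not anticipate any real obstacle here: the only potentially delicate point is recognizing the combinatorial identity $\upbeta_1+\upbeta_2 = $ (the eight non-pure terms in $\Phi$), which is verified by expanding two wedge products.
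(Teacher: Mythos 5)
Your proposal is correct and follows essentially the same route as the paper, which derives the corollary directly from Lemma \ref{4forms}: the six ``decomposable'' terms are wedge products of the always-invariant 2-forms $e^{12},e^{34},e^{56},e^{78}$, the remaining eight terms are exactly $\upbeta_1+\upbeta_2$, and admissibility is immediate since $\Phi$ is the index-shift of the model form \eqref{4f}. All the computations you flag (the identity $\upbeta_1+\upbeta_2=$ the eight mixed terms, and $\chi_*(e_9)(\upbeta_1+\upbeta_2)=(k-\ell-m)(\upzeta_1+\upzeta_2)$) check out.
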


\begin{remark}\label{NoLCPCklm}
Due to the results of  Appendix \ref{app-detailsCklm}, it is straightforward to check that there are no invariant closed 1-forms on $C_{k, \ell, m}$ as long as the integers $k, \ell, m$ satisfy $k>\ell>m>0$.
Thus, this homogeneous space cannot admit any invariant l.c.p.~ $\Spin(7)$-structure.
\end{remark}

We now describe a 5-parameter family of invariant $\Spin(7)$-structures on $C_{\ell+m,\ell,m}$  inducing the most general invariant metric $( \ , \ )_{y_1,y_2,y_3,y_4, y_5} \coloneqq ( \ , \ )_{y_1,y_2,y_3,y_4, y_5, 0}$ adapted to
the reductive decomposition $\fr{m}=\fr{m}_1\oplus\fr{m}_2\oplus\fr{m}_2\oplus\fr{m}_0$, with $\fr{m}_{0}=\fr{m}_4\oplus\fr{m}_5$ (cf.~Proposition \ref{nik}).
The choice $\theta=0$ for the summand $\fr{m}_{0}^{\theta}$ is just a matter of convenience, as it simplifies the computations afterwards.

\begin{prop}\label{Leeform}
The 4-form
\begin{eqnarray*}
\Phi_{y_1,y_2,y_3,y_4,y_5}&\coloneqq&y_{1}^{2}y_{2}^{2}e^{1234}+y_{1}^{2}y_{3}^{2}e^{1256}-y_{2}^{2}y_{3}^{2}e^{3456}-y_{1}^{2}y_{4}y_{5}e^{1278}+y_{2}^{2}y_{4}y_{5}e^{3478}+y_{3}^{2}y_{4}y_{5}e^{5678}\nonumber\\
						&&+y_{1}y_{2}y_{3}y_4\,\upbeta_1+y_{1}y_{2}y_{3}y_{5}\,\upbeta_2,
\end{eqnarray*}
where $y_1,\ldots,y_5$, are positive real parameters,
defines an invariant $\Spin(7)$-structure  on  $C_{\ell+m, \ell, m}, \ell>m>0$.
It induces the metric $( \ , \ )_{y_1,y_2,y_3,y_4, y_5}$, and its Lee form  is given by
\begin{eqnarray*}
\vartheta_{y_1,y_2,y_3,y_4, y_5}&=&\frac{2}{7\bar{c}_7}\Big(\ell m (y_2^2y_3^2-y_1^2y_3^2+6y_1^2y_2^2)y_{4}^2y_{5}^2+m^2(y_3^2y_4^2+y_1^2y_4^2+4y_1^2y_3^2)y_{2}^2y_{5}^2\\
&&+2\ell^2(y_4^2+2y_3^2)y_1^2y_2^2y_5^2\Big) \,  e^8 - \frac{2(2l+m)}{7\bar{c}_8}\Big((2y_1^2+y_5^2)y_2^2y_3^2y_4^2\Big) \, e^{7},
\end{eqnarray*}
where $\bar{c}_7$ and $\bar{c}_8$ are obtained by setting $k=\ell+m$ in the expressions of $c_7$ and $c_8$, respectively. In particular, the $\Spin(7)$-structure is always of mixed type.
\end{prop}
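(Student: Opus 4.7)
The plan is to recognize $\Phi_{y_1,\ldots,y_5}$ as the admissible 4-form \eqref{invmm} expressed in a rescaled coframe, then compute the Lee form using invariance to reduce the unknowns. For the first step, define $\tilde{e}^i \coloneqq y_a\, e^i$ with $a=1$ for $i\in\{1,2\}$, $a=2$ for $i\in\{3,4\}$, $a=3$ for $i\in\{5,6\}$, $a=4$ for $i=7$, and $a=5$ for $i=8$. Then $\{\tilde{e}_1,\ldots,\tilde{e}_8\}$, dual to $\{\tilde{e}^i\}$, is orthonormal with respect to $( \ , \ )_{y_1,\ldots,y_5}$. Substituting $e^i = y_a^{-1}\tilde{e}^i$ into $\Phi_{y_1,\ldots,y_5}$, every prefactor cancels against the rescaling: for instance $y_1^2 y_2^2\, e^{1234} = \tilde{e}^{1234}$, $-y_1^2 y_4 y_5\, e^{1278} = -\tilde{e}^{1278}$, and $y_1 y_2 y_3 y_4\, \upbeta_1 = \tilde{e}^{1357} - \tilde{e}^{1467} + \tilde{e}^{2457} + \tilde{e}^{2367}$. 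A term-by-term comparison then shows the resulting 4-form matches \eqref{invmm} exactly. By Corollary \ref{4frminv}, $\Phi_{y_1,\ldots,y_5}$ therefore defines an invariant $\Spin(7)$-structure whose induced metric is the unique one making $\{\tilde{e}^i\}$ orthonormal, namely $( \ , \ )_{y_1,\ldots,y_5}$.

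For the Lee form $\vartheta = -\frac{1}{7}\star(\star d\Phi \wedge \Phi)$, invariance of both $\Phi$ and $g_\Phi$ forces $\vartheta$ to be an invariant 1-form. By the discussion preceding Lemma \ref{2forms}, the space of invariant 1-forms on $C_{\ell+m,\ell,m}$ with $\ell>m>0$ equals $\Span_{\bb{R}}\{e^7,e^8\}$, so we may write $\vartheta = A\, e^7 + B\, e^8$ for scalars $A,B$ depending on $y_1,\ldots,y_5,\ell,m$. To determine them I would compute $d\Phi_{y_1,\ldots,y_5}$ by differentiating each of the ten invariant generators from Lemma \ref{4forms} via the Maurer-Cartan equations of $\fr{g}=3\fr{su}(2)$ restricted to $\fr{m}$ (as developed in Appendix \ref{app-detailsCklm}), then apply $\star$ in the orthonormal frame $\{\tilde{e}^i\}$ (which inserts suitable $y_a^{\pm 2}$ factors), wedge with $\Phi$, and apply $\star$ once more. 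A practical shortcut is to expand everything in the $\tilde{e}^i$ basis and read off the coefficients of $\tilde{e}^7$ and $\tilde{e}^8$ directly, converting back via $\tilde{e}^7 = y_4 e^7$ and $\tilde{e}^8 = y_5 e^8$. The main obstacle is the length of the bookkeeping: $d\Phi$ is a 5-form with many monomials whose coefficients depend quadratically on $y_1,\ldots,y_5$, and two further Hodge-star operations must be applied; invariance cuts the final output down to two scalars but does not shorten the intermediate calculation.

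For the mixed-type claim, observe from the stated expression that $A$ is a strictly negative multiple of $(2y_1^2+y_5^2)y_2^2 y_3^2 y_4^2$, which is nonzero since $2\ell+m>0$ and every $y_i>0$; hence $\vartheta\neq 0$ and $\Phi_{y_1,\ldots,y_5}$ does not lie in the balanced class $\mc{W}_1$. On the other hand, if the structure were locally conformal parallel (class $\mc{W}_2$), its Lee form would be closed, but by Remark \ref{NoLCPCklm} there are no invariant closed 1-forms on $C_{\ell+m,\ell,m}$ with $\ell>m>0$. Thus $\Phi_{y_1,\ldots,y_5}$ belongs to the mixed class $\mc{W}_1 \oplus \mc{W}_2$.
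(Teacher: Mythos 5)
Your proposal is correct and follows essentially the same route as the paper: the paper likewise treats admissibility and the induced metric as immediate (your rescaled coframe $\tilde{e}^i=y_a e^i$ makes explicit what the paper calls ``clear''), computes the Lee form from the structure equations of Appendix~\ref{app-detailsCklm} and the Hodge operator, and rules out the balanced and l.c.p.\ classes via $\vartheta\neq 0$ and Remark~\ref{NoLCPCklm}. Your observation that the $e^7$-coefficient alone is manifestly nonzero is a slightly cleaner way to see $\vartheta\neq 0$ than the paper's appeal to a two-equation polynomial system, but this is a refinement of the same argument rather than a different approach.
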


\begin{proof}
It is clear that the 4-form $\Phi_{y_1,y_2,y_3,y_4, y_5}$ is invariant and admissible and that it induces the metric $( \ , \ )_{y_1, \ldots, y_5}$.
For brevity, we denote by $\star$ the corresponding Hodge operator.
Recall that the Lee form is given by
\[
\vartheta_{y_1,y_2,y_3,y_4, y_5}= -\frac{1}{7}\star(\star{\rm d}\Phi_{y_1,y_2,y_3,y_4, y_5}\wedge \Phi_{y_1,y_2,y_3,y_4, y_5}).
\]
Using the results of Appendix \ref{app-detailsCklm} together with the definition of the Hodge operator, we obtain the expression of $\vartheta_{y_1,y_2,y_3,y_4, y_5}$ described above.
To determine the Fern\'andez type of this $\Spin(7)$-structure, it is sufficient to observe that the Lee form is never closed (cf.~Remark \ref{NoLCPCklm}),
and that the condition $\vartheta_{y_1,y_2,y_3,y_4, y_5}=0$ is equivalent to a system of two polynomial equations in the variables $y_1,\ldots,y_5,\ell,m$, which has no solutions under the constraints $\ell>m>0$ and
$y_i>0$, $i=1,\ldots,5$. Thus, the $\Spin(7)$-structure is of mixed type.
\end{proof}

\begin{corol}\label{CanConCklm}
The  invariant $\Spin(7)$-structure defined by the admissible 4-form $\Phi \coloneqq\Phi_{1, \ldots, 1}$ on $C_{\ell+m, \ell, m}$,
with $\ell > m> 0$,  is of mixed type and its characteristic connection $\nabla$ coincides with the canonical connection $\nabla^{\scriptscriptstyle0}$ with respect to the naturally reductive structure induced by
$g \coloneqq ( \ , \ )_{1, \ldots, 1}$. In particular, its torsion form is parallel,  i.e., $\nabla T=0$.
\end{corol}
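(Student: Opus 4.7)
The plan is to identify the characteristic connection $\nabla$ with the canonical connection $\nabla^{\scriptscriptstyle0}$ of the naturally reductive presentation of $(C_{\ell+m,\ell,m},g)$ and then invoke the general fact that $\nabla^{\scriptscriptstyle0}$ parallelizes every $G$-invariant tensor. The mixed type statement is a direct specialization of Proposition \ref{Leeform} to $y_1=\cdots=y_5=1$: for $\ell>m>0$ the Lee form $\vartheta_{1,\ldots,1}$ displayed there is nonzero and, by Remark \ref{NoLCPCklm}, not closed, so $\Phi$ is neither balanced nor locally conformal parallel.

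First I would observe that, by construction (cf.~Proposition \ref{nik}), the metric $g=(\,,\,)_{1,\ldots,1}$ coincides with the restriction $Q|_{\fr{m}}$ of the bi-invariant form $\langle A,B\rangle=-2\,\tr(AB)$ on $\fr{g}=3\fr{su}(2)$. With respect to the reductive decomposition $\fr{g}=\fr{h}\oplus\fr{m}$ of Section \ref{cklm} (which is $\langle\cdot,\cdot\rangle$-orthogonal by definition of $\fr{m}$), the $\Ad(G)$-invariance of $\langle\cdot,\cdot\rangle$ yields, for all $X,Y,Z\in\fr{m}$,
\[
g([X,Y]_\fr{m},Z)=\langle [X,Y],Z\rangle=-\langle Y,[X,Z]\rangle=-g(Y,[X,Z]_\fr{m}),
\]
so the presentation is naturally reductive. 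The associated canonical connection $\nabla^{\scriptscriptstyle0}$ therefore has totally skew-symmetric torsion, given on fundamental vector fields by $T^{\scriptscriptstyle0}(X,Y,Z)=-\langle [X,Y]_\fr{m},Z\rangle$, and every $G$-invariant tensor on $C_{\ell+m,\ell,m}$ is $\nabla^{\scriptscriptstyle0}$-parallel. In particular $\nabla^{\scriptscriptstyle0}\Phi=0$ and $\nabla^{\scriptscriptstyle0}T^{\scriptscriptstyle0}=0$.

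Next I would invoke the uniqueness part of Ivanov's theorem recalled in Section \ref{prel}: the characteristic connection $\nabla$ is the only metric connection with totally skew-symmetric torsion for which $\Phi$ is parallel. Since $\nabla^{\scriptscriptstyle0}$ is metric, has totally skew-symmetric torsion, and satisfies $\nabla^{\scriptscriptstyle0}\Phi=0$, this forces $\nabla=\nabla^{\scriptscriptstyle0}$, whence $T=T^{\scriptscriptstyle0}$. The parallelism $\nabla T=\nabla^{\scriptscriptstyle0}T^{\scriptscriptstyle0}=0$ is then automatic from the defining property of the canonical connection.

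There is essentially no technical obstacle; the only point that must be made carefully is the coincidence between the normal metric $Q|_{\fr{m}}$ and the member of the family in Proposition \ref{nik} singled out by $y_1=\cdots=y_5=1$. For a generic member of that family the reductive splitting used in Section \ref{cklm} is no longer naturally reductive, because independent rescalings of the summands $\fr{m}_i$ cannot be realized simultaneously by a single bi-invariant form on $\fr{g}$, and the characteristic connection would not in general coincide with a canonical one. This is why the corollary is specifically stated for the normal choice of parameters.
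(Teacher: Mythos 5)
Your proposal is correct and follows essentially the same route as the paper: specialize Proposition \ref{Leeform} to $y_1=\cdots=y_5=1$ for the mixed type, note that the normal metric makes the presentation naturally reductive so that $\nabla^{\scriptscriptstyle0}$ has totally skew-symmetric torsion and parallelizes all $G$-invariant tensors, and then conclude $\nabla=\nabla^{\scriptscriptstyle0}$ from the uniqueness in Ivanov's theorem. The only difference is that the paper additionally computes the explicit expression of $T$ from the structure constants in Appendix \ref{app-detailsCklm}, which is not needed for the parallelism claim itself.
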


\begin{proof}
We already know that the 4-form $\Phi$ defines a $\Spin(7)$-structure of mixed type and it induces the normal metric $g\coloneqq( \ , \  )_{1, \ldots, 1}$.
The homogeneous space $(C_{k, \ell, m}, g)$ is naturally reductive. Consequently, the canonical connection $\nabla^{\scriptscriptstyle0}$ has totally skew-symmetric torsion (cf.~\cite{TrVa}).
Moreover, since $\Phi$ and $g$ are $G$-invariant, they are both parallel with respect to $\nabla^{\scriptscriptstyle0}$ (see e.g.~\cite[Ch.~X, Prop.~2.7]{KoNo}).
Thus, the connection $\nabla^{\scriptscriptstyle0}$ must coincide with the canonical connection $\nabla$ by the uniqueness of the latter \cite[Thm.~1.1]{Iv},
and the $G$-invariant torsion form $T$ is $\nabla$-parallel.

Using the results of Appendix \ref{app-detailsCklm}, we can easily compute the expression of the torsion form, obtaining
\[
T = \star\dd{\Phi}-\frac{7}{6}\star(\vartheta\wedge{\Phi}) = 	-\frac{m(\ell+m)}{\bar{c}_7}\, e^{127}-\frac{\ell}{{\bar{c}_8}}\, e^{128}-\frac{\ell m}{\bar{c}_7}\, e^{347}
											+\frac{\ell+m}{\bar{c}_8}\, e^{348}+\frac{(\ell+m)^{2}+\ell^{2}}{\bar{c}_7}\,e^{567},
\]
where $\bar{c}_7$ and $\bar{c}_8$ are obtained by setting $k=\ell+m$ in the expressions of $c_7$ and $c_8$, respectively.
\end{proof}

To study the case $k=\ell=1$, $m=0$ (cf.~Example \ref{su2su2s2}), we can proceed in a similar way. It turns out that the space of invariant 4-forms on $C_{1,1,0}$ has dimension 26 and that
an invariant $\Spin(7)$-structure on $C_{1,1,0}$ is given by
\begin{eqnarray*}
\Phi		&\coloneqq&e^{1234}+e^{1256}-e^{1278}+e^{1357}+e^{1368}+e^{1458}-e^{1467}\nonumber\\
		&&+e^{5678}+e^{3478}-e^{3456}+e^{2468}+e^{2457}-e^{2358}+e^{2367}.
\end{eqnarray*}
This $\Spin(7)$-structure is of mixed type with associated invariant metric $g_\Phi = \sum_{i=1}^8 (e^i)^2$ and Lee form $\vartheta = -\frac{6\sqrt{2}}{7}\,e_7+\frac67\,e_8$.

\subsection{The Calabi-Eckmann manifold $(\SU(3)/\SU(2))\times\SU(2)$}\label{CEck}
Let $G\coloneqq \SU(3)\times\SU(2)$ and $H\coloneqq \SU(2)$.
$M=G/H$ is a direct product of prime homogeneous spaces,  hence a reductive decomposition of the Lie algebra $\fr{g}=\fr{su}(3)\oplus\fr{su}(2)$ is given by
\[
\fr{g}=\fr{h}\oplus\fr{m}, \quad \fr{m}=\fr{n}\oplus\bb{R}^{3}.
\]
Clearly, $\fr{n}$ coincides with the tangent space of $\SU(3)/\SU(2)$ at the identity coset,  and $\bb{R}^3\cong T_{e}\SU(2)$.
Let us now consider the following basis of $\fr{su}(3)$
\[
\renewcommand\arraystretch{1}
\begin{array}{cccc}
e_1 \coloneqq
 \begin{pmatrix}
0 & -1 & 0 \\
1 &  0 & 0\\
0 & 0 & 0
\end{pmatrix},

&

e_2\coloneqq
\begin{pmatrix}
0 & i & 0 \\
i &  0 & 0\\
0 & 0 & 0
\end{pmatrix},

&

e_3\coloneqq
\begin{pmatrix}
0 & 0 & 1 \\
0 &  0 & 0 \\
-1 & 0 & 0
\end{pmatrix}, 		

&

e_4\coloneqq
\begin{pmatrix}
0 & 0 & i \\
0 &  0 & 0\\
i & 0 & 0
\end{pmatrix}, \\ \\

e_5 \coloneqq \frac{1}{\sqrt{3}}
\begin{pmatrix}
2i & 0 & 0 \\
0 &  -i & 0\\
0 & 0 & -i
\end{pmatrix},

&

u_6\coloneqq
\begin{pmatrix}
0 & 0 & 0 \\
0 &  i & 0\\
0 & 0 & -i
\end{pmatrix},

&

u_7\coloneqq
\begin{pmatrix}
0 & 0 & 0 \\
0 &  0 & 1\\
0 & -1 & 0
\end{pmatrix},

&

u_8\coloneqq
\begin{pmatrix}
0 & 0 & 0 \\
0 &  0 & i\\
0 & i & 0
\end{pmatrix}\,.

\end{array}
\]
With this choice, the stability subalgebra $\fr{h}$ is generated by the triple $\{u_6, u_7, u_8\}$, and $\{e_1, \ldots, e_5\}$ is a basis of $\fr{n} = \V\oplus\bb{R}$,
where the module $\bb{R}$ is spanned by $e_5$ and  $\V\cong \bb{H}=\Span_{\bb{R}}\{e_1, e_2, e_3, e_4\}\cong\Span_{\bb{R}}\{\bold{1}, \bold{i}, \bold{j}, \bold{k}\}\,.$
 Moreover, it is easy to check that this $\fr{su}(3)$-basis is orthonormal with respect to the bi-invariant inner product
\begin{equation}\label{biinvsu}
\langle X, Y\rangle \coloneqq -\frac12\,{\rm tr}\left(XY\right).
\end{equation}

From now on, we shall identify $\V\cong\V^*$ via the quaternionic metric on $\V\cong\bb{H}$.
A basis for  the tangent space $\fr{m}$ is given by the union of $\{e_1, \ldots, e_5\}$ with a basis $\{e_6, e_7, e_8\}$ of $T_{e}\SU(2) \cong \fr{su}(2)$.
The latter may be chosen as follows
\[
e_6\coloneqq -\frac12
\begin{pmatrix}
0 & i  \\
i &  0
\end{pmatrix},
\quad
e_7\coloneqq \frac12
\begin{pmatrix}
0 & -1 \\
1 & 0
\end{pmatrix}, 		
\quad
e_8\coloneqq \frac12
\begin{pmatrix}
-i & 0\\
0 &  i
\end{pmatrix},
\]
so that $[e_6, e_7]=e_8,~[e_6, e_8]=-e_7,~[e_7, e_8]= e_6.$  Since $\fr{m}=\fr{n}\oplus\bb{R}^3$ and the action of $H$ on $\SU(2)\cong\Ss^3$ is trivial, we have
 \[
 [e_{i}, e_{j}]=0=[u_k, e_j], \quad \forall \ i=1, \ldots 5, \ k, j=6, 7, 8. \quad
 \]
Of course,   $e_5$   is also invariant under the isotropy action, i.e.,   $[u_k, e_5]=0$ for any $k=6, 7, 8$, while on $\V$ we have
\[
\begin{array}{llll}
\chi_{*}(u_6)e_1={e_2},	&	\chi_{*}(u_6)e_2= {-e_1},	&	\chi_{*}(u_6)e_3=e_4,				&	\chi_{*}(u_6)e_4=-e_3,\\
\chi_{*}(u_7)e_1= {e_3},	&	\chi_{*}(u_7)e_2=-e_4,				&	\chi_{*}(u_7)e_3= {-e_1}	&	\chi_{*}(u_7)e_4=e_2,\\	
\chi_{*}(u_8)e_1= {e_4},	&	\chi_{*}(u_8)e_2=e_3,				&	\chi_{*}(u_8)e_3=-e_2, 				&	\chi_{*}(u_8)e_4= {-e_1}.
\end{array}
\]
Hence, in terms of skew-symmetric matrices $E_{ij}$ we obtain the orthogonal transformations
 \[
 \chi_{*}(u_6)|_{\fr{m}}=E_{12}+E_{34}, \quad \chi_{*}(u_7)|_{\fr{m}}=E_{13}-E_{24}, \quad \chi_{*}(u_8)|_{\fr{m}}=E_{14}+E_{23}\,.
 \]

We  are now ready to describe the space of $G$-invariant forms on  $M$. As before, we shall denote by $e^{k}\in\fr{m}^{*}$ the dual of $e_{k}$.
 \begin{lemma}\label{calab4forms}  The spaces of $G$-invariant $k$-forms on $M=(\SU(3)/\SU(2))\times\SU(2)$,  for $k=1, 2, 3, 4,$ have the following dimensions:
 \[
\mathrm{dim}\left((\Lambda^1\fr{m}^*)^{H}\right) = 4, \quad \mathrm{dim}\left((\Lambda^2\fr{m}^*)^{H}\right) = 9, \quad \mathrm{dim}\left((\Lambda^3\fr{m}^*)^{H}\right) = 16, \quad
\mathrm{dim}\left((\Lambda^4\fr{m}^*)^{H}\right) = 20.
\]
A basis of each space is described in the proof.
\end{lemma}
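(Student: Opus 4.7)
The plan is to decompose $\fr{m}$ as an $H$-module and reduce the invariant-form counts to invariants on the standard quaternionic representation of $\SU(2)$.

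First, the isotropy relations displayed just above the lemma show that $H=\SU(2)$ acts trivially on each of $e_5,e_6,e_7,e_8$. Consequently, $\fr{m}$ splits $H$-equivariantly as $\fr{m}=\V\oplus T$, where $\V=\Span_{\bb{R}}\{e_1,e_2,e_3,e_4\}\cong\bb{H}$ carries the standard quaternionic representation of $H$, and $T\coloneqq\Span_{\bb{R}}\{e_5,e_6,e_7,e_8\}$ is the trivial $4$-dimensional $H$-module (coming from the $\bb{R}$-summand of $\fr{n}$ together with the tangent space $T_e\SU(2)$).

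Next, since $T$ is trivial, one obtains
\[
(\Lambda^k\fr{m}^*)^H \;=\; \bigoplus_{p+q=k}(\Lambda^p\V^*)^H\otimes\Lambda^q T^{*},
\]
so the problem reduces to computing $\dim(\Lambda^p\V^*)^H$ for $0\le p\le 4$. Irreducibility of $\V$ immediately gives $(\Lambda^1\V^*)^H=0$, and Hodge duality with respect to the invariant volume form $e^{1234}$ then forces $(\Lambda^3\V^*)^H=0$ together with $(\Lambda^4\V^*)^H=\bb{R}\,e^{1234}$. The main step is to show $\dim(\Lambda^2\V^*)^H=3$: under $\Spin(4)\cong\SU(2)_L\times\SU(2)_R$ the space $\Lambda^2\V^*$ splits into its self-dual and anti-self-dual $3$-dimensional summands, exactly one of which is the fixed subspace for left multiplication by $\SU(2)$. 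A direct check against the matrices $\chi_*(u_6),\chi_*(u_7),\chi_*(u_8)$ confirms that
\[
\omega_1 = e^{12}-e^{34},\qquad \omega_2 = e^{13}+e^{24},\qquad \omega_3 = e^{14}-e^{23}
\]
are $H$-invariant and span $(\Lambda^2\V^*)^H$.

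Combining these pieces, one finds
\[
\dim(\Lambda^k\fr{m}^*)^H \;=\; \binom{4}{k}+3\binom{4}{k-2}+\binom{4}{k-4},
\]
which evaluates to $4,9,16,20$ for $k=1,2,3,4$, respectively. Explicit bases are then read off the decomposition: $\{e^{i}:5\le i\le 8\}$ for $1$-forms; $\{\omega_1,\omega_2,\omega_3\}\cup\{e^{ij}:5\le i<j\le 8\}$ for $2$-forms; $\{\omega_a\wedge e^{i}\}\cup\{e^{ijk}:5\le i<j<k\le 8\}$ for $3$-forms; and $\{e^{1234},e^{5678}\}\cup\{\omega_a\wedge e^{ij}:a=1,2,3,\ 5\le i<j\le 8\}$ for $4$-forms. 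The only step with genuine content is the $3$-dimensionality of $(\Lambda^2\V^*)^H$; the remaining dimension counts are purely combinatorial, so I expect the $\Lambda^2\V^*$ calculation to be the sole obstacle.
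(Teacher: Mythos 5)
Your proof is correct and follows essentially the same route as the paper: both reduce the computation to the $H$-invariants of $\Lambda^p\V^*$ for the quaternionic module $\V$ (namely $(\Lambda^1\V^*)^H=(\Lambda^3\V^*)^H=0$, $(\Lambda^2\V^*)^H=\Span_{\bb{R}}\{e^{12}-e^{34},\,e^{13}+e^{24},\,e^{14}-e^{23}\}$, $(\Lambda^4\V^*)^H=\bb{R}\,e^{1234}$) and then tensor with exterior powers of the trivially acted-upon complement. Your only (harmless) streamlining is to lump $\bb{R}\oplus\bb{R}^3$ into a single trivial $4$-dimensional module and package the count as $\binom{4}{k}+3\binom{4}{k-2}+\binom{4}{k-4}$, whereas the paper keeps $\fr{n}=\V\oplus\bb{R}$ and $\bb{R}^3$ separate and enumerates bases degree by degree; the resulting bases agree.
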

\begin{proof}
From the above description of the isotropy action, we immediately see that $(\Lambda^1\fr{m}^*)^{H}$ is spanned by $\{e^5, e^6, e^7, e^8\}$.

Let us consider the space of invariant 2-forms. Since $\fr{m}=\fr{n}\oplus\bb{R}^3$ is an orthogonal $\Ad(H)$-invariant decomposition, we obtain the invariant splitting
\[
(\Lambda^2\fr{m}^*)^{H}=(\Lambda^{2}\fr{n}^*)^{H}\oplus(\Lambda^{1}\fr{n}^*\wedge\bb{R}^{3})\oplus(\Lambda^{2}\bb{R}^{3})^{H}\,,
\]
where we identify $\bb{R}^{3}\cong(\bb{R}^{3})^{*}$. Obviously,  the second module is spanned by $\{e^{56}, e^{57}, e^{58}\}$, and the third one by $\{e^{67}, e^{68}, e^{78}\}$. Thus we are  left with
\[
(\Lambda^{2}\fr{n}^*)^{H}=(\Lambda^{2}\V)^{H}\oplus(\V\wedge\bb{R})^{H}\,.
\]
The summand $(\V\wedge\bb{R})^{H}$ is easily seen to be trivial,
while for the first summand we recall that
\[
\Lambda^{2}\V=\Lambda^{2}\bb{H}\cong\fr{so}(4)=\fr{sp}(1)^{+}\oplus\fr{sp}(1)^{-}=\Span_{\bb{R}}\{e^{12}, e^{13}, e^{14}\}\oplus\Span_{\bb{R}}\{e^{23}, e^{24}, e^{34}\}\,.
\]
Then, a direct computation shows that a basis of $(\Lambda^{2}\V)^{H}$ is given by
$\left\{e^{12}-e^{34},~e^{13}+e^{24},~e^{14}-e^{23}\right\}$.

 Let us now consider the space of invariant 3-forms
\[
 (\Lambda^3\fr{m}^*)^{H}= (\Lambda^3\fr{n}^*)^{H}\oplus(\Lambda^2\fr{n}^*\wedge\bb{R}^3)^{H}\oplus(\Lambda^{1}\fr{n}^{*}\wedge\Lambda^{2}\bb{R}^3)^{H}\oplus(\Lambda^{3}\bb{R}^{3})^{H}\,.
\]
The last two summands are generated by $\left\{e^{567}, e^{568}, e^{578}\right\}$ and $\left\{e^{678}\right\}$, respectively.
The second summand
is spanned by $\left\{e^{12k}-e^{34k},~e^{13k}+e^{24k},~e^{14k}-e^{23k}~:~k=6,7,8\right\}$. Thus, it has dimension 9.
Finally, we have
\[
 (\Lambda^3\fr{n}^*)^{H}= (\Lambda^3\V)^{H}\oplus (\Lambda^2\V\wedge\bb{R})^{H}\,,
\]
where the first summand is trivial, while a basis of invariant 3-forms for the second summand is given by
$\left\{e^{125}-e^{345},~e^{135}+e^{245},~e^{145}-e^{235}\right\}$.
Summing up, the space of invariant 3-forms on $M^8$ is 16-dimensional.

As for the $H$-module $(\Lambda^4\fr{m}^*)^{H}$, we have
\[
(\Lambda^4\fr{m}^*)^{H}=(\Lambda^4\fr{n}^*)^{H}\oplus(\Lambda^3\fr{n}^*\wedge\bb{R}^3)^{H}\oplus(\Lambda^2\fr{n}^*\wedge\Lambda^2\bb{R}^3)^{H}\oplus(\Lambda^1\fr{n}^*\wedge\Lambda^{3}\bb{R}^3)^{H}\,.
\]
The second and the third module are both 9-dimensional. A basis of $(\Lambda^3\fr{n}^*\wedge\bb{R}^3)^{H}\cong (\Lambda^3\fr{n}^*)^{H}\wedge(\bb{R}^3)^{H}$ is given by
\[
\left\{e^{125k}-e^{345k},~e^{135k}+e^{245k},~e^{145k}-e^{235k}~:~k=6,7,8 \right\}
\]
and a basis of $(\Lambda^2\fr{n}^*\wedge\Lambda^2\bb{R}^3)^{H}\cong (\Lambda^2\fr{n}^*)^{H}\wedge(\Lambda^2\bb{R}^3)^{H}$ is
\[
\left\{(e^{12}-e^{34})\wedge\alpha,~(e^{13}+e^{24})\wedge\alpha,~(e^{14}-e^{23})\wedge\alpha~:~\alpha=e^{67}, e^{68}, e^{78}\right\}.
\]
The fourth module is 1-dimensional with generator $\left\{e^{5678}\right\}$.
Also,  $(\Lambda^4\fr{n}^*)^{H}  \cong (\Lambda^4\V)^{H}\oplus(\Lambda^3\V\wedge\bb{R})^{H}$,
where $\left\{e^{1234}\right\}$ spans the first summand while the second one is trivial. This finishes the proof.
 \end{proof}

Using the basis of invariant 4-forms, we obtain the following.
\begin{corol}
The  4-form
\begin{eqnarray*}
\Phi		&\coloneqq&e^{1234}+e^{1256}-e^{1278}+e^{1357}+e^{1368}+e^{1458}-e^{1467}\nonumber\\
		&&+e^{5678}+e^{3478}-e^{3456}+e^{2468}+e^{2457}-e^{2358}+e^{2367}
\end{eqnarray*}
induces a $G$-invariant $\Spin(7)$-structure on $(\SU(3)/\SU(2))\times\SU(2)$.
\end{corol}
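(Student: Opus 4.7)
The plan is to verify two things: that $\Phi$ is $\mathrm{Ad}(H)$-invariant, so that it descends to a well-defined $G$-invariant 4-form on $G/H$, and that $\Phi$ is admissible, i.e.\ its $\mathrm{GL}(\fr{m})$-stabilizer is conjugate to $\Spin(7)$. Both reduce to algebraic checks on $\fr{m}\cong\bb{R}^{8}$ with the basis $\{e_1,\ldots,e_8\}$ fixed in Section~\ref{CEck}.

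For invariance, I would regroup the fourteen terms of $\Phi$ as
\begin{align*}
\Phi &= e^{1234} + e^{5678} + (e^{12}-e^{34})\wedge(e^{56}-e^{78}) \\
& \quad + (e^{13}+e^{24})\wedge(e^{57}+e^{68}) - (e^{14}-e^{23})\wedge(e^{67}-e^{58}),
\end{align*}
and match each summand against the basis of $(\Lambda^{4}\fr{m}^{*})^{H}$ described in Lemma~\ref{calab4forms}. Concretely, $e^{1234}$ spans $(\Lambda^{4}\V)^{H}$; the three 2-forms $e^{12}-e^{34}$, $e^{13}+e^{24}$, $e^{14}-e^{23}$ are precisely the basis of $(\Lambda^{2}\V)^{H}$ exhibited there (the $\fr{sp}(1)^{-}$-invariant self-dual 2-forms on $\V\cong\bb{H}$); and the remaining factors $e^{56},e^{57},e^{58},e^{67},e^{68},e^{78},e^{5678}$ are automatically $\mathrm{Ad}(H)$-invariant, since $H=\SU(2)$ acts trivially on the complement $\bb{R}\oplus\bb{R}^{3}=\Span_{\bb{R}}\{e_5,e_6,e_7,e_8\}$. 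Consequently $\Phi\in(\Lambda^{4}\fr{m}^{*})^{H}$.

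For admissibility, the plan is to observe that the relabelling $(e_{1},\ldots,e_{8})\mapsto(e_{0},\ldots,e_{7})$ sends $\Phi$ termwise to the Cayley 4-form $\Phi_{\scriptscriptstyle0}$ of \eqref{4f}. Since this relabelling is an orthogonal transformation of $\fr{m}\cong\bb{R}^{8}$, the stabilizer of $\Phi$ is conjugate to $\Spin(7)$. Equivalently, this is the same coefficient expression shown to be admissible in Corollary~\ref{4frminv} for $C_{\ell+m,\ell,m}$, and since admissibility is a property of the coefficients in an orthonormal frame, that verification transfers without change. The only point requiring attention — and the one mild obstacle to watch — is the specific sign pattern in the decomposition above: the alternative combinations (e.g.\ $e^{12}+e^{34}$ instead of $e^{12}-e^{34}$) would still yield $\mathrm{Ad}(H)$-invariant 4-forms, but none of them would be admissible; the term-by-term comparison with $\Phi_{\scriptscriptstyle0}$ is what confirms that the particular combination written in the corollary is indeed the Cayley one and therefore defines a genuine $\Spin(7)$-structure.
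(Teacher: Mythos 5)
Your proposal is correct and follows essentially the same route as the paper, which states the corollary as an immediate consequence of the basis of invariant $4$-forms in Lemma~\ref{calab4forms}: one checks that each summand of $\Phi$ lies in $(\Lambda^{4}\fr{m}^{*})^{H}$ (your regrouping expands exactly into the listed generators $e^{1234}$, $e^{125k}-e^{345k}$, $e^{135k}+e^{245k}$, $e^{145k}-e^{235k}$, $(e^{12}-e^{34})\wedge e^{78}$, etc.), and that the shift $(e_1,\ldots,e_8)\mapsto(e_0,\ldots,e_7)$ carries $\Phi$ termwise onto the model form $\Phi_{\scriptscriptstyle0}$ of \eqref{4f}, which gives admissibility. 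Your explicit decomposition and the caveat about the sign pattern are, if anything, more detailed than the paper's (unwritten) argument.
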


\begin{remark}
Using the Koszul formula, it is straightforward to check that there are no $G$-invariant closed 1-forms on $M=G/H$.
Thus, we deduce that the Calabi-Eckmann manifold $(\SU(3)/\SU(2))\times\SU(2)$ cannot admit any invariant l.c.p.~ $\Spin(7)$-structure.
\end{remark}

Now, we  shall construct  a family of invariant  $\Spin(7)$-structures  inducing  the  general invariant metric on $M=G/H$.
To this aim,  we  first  study the space of $G$-invariant Riemannian metrics on $M$, or equivalently,  the space of $\mathrm{Ad}(H)$-invariant inner products on the reductive complement $\fr{m}$.
Recall that the decomposition of $\fr{m}$ into inequivalent irreducible $H$-modules is given by  $\fr{m}=\V\oplus \bb{R}\oplus\bb{R}^3$,
with $H$ acting trivially on $\bb{R}^3\cong\fr{su}(2)$. Consequently, an $\mathrm{Ad}(H)$-invariant inner product $g$ on $\fr{m}$ must be of the form
\[
g = t_1^2\left.\langle\cdot,\cdot\rangle\right|_{\V} + t_2^2\left.\langle\cdot,\cdot\rangle\right|_{\bb{R}} + h,
\]
where $\langle\cdot,\cdot\rangle$ is the inner product on $\fr{su}(3)$ given in \eqref{biinvsu}, $t_1,t_2\in\bb{R}^{\scriptscriptstyle+}$, and $h$ is an inner product on $\bb{R}^3\cong\fr{su}(2)$
corresponding to a left-invariant Riemannian metric on $\SU(2)$.
It follows from \cite{Mil} that there exist suitable real parameters $0<t_3\leq t_4 \leq t_5$ for which
$(\SU(2),h)$ is isometrically isomorphic to $\SU(2)$, endowed with the left-invariant Riemannian metric induced by the following inner product on $\fr{su}(2)$
\[
h_{t_3,t_4,t_5} \coloneqq t_3^2\,(e^6\otimes e^{6})+t_4^2\,(e^7\otimes e^7)+t_5^2\,(e^8\otimes e^8).
\]
In particular, the canonical bi-invariant metric on $\SU(2)$ corresponds to $h_{1,1,1}$.
The general invariant metric on $M$ is then given by the inner product
\[
g_{t_1,t_2,t_3,t_4,t_5} \coloneqq t_1^2\left.\langle\cdot,\cdot\rangle\right|_{V^4} + t_2^2\left.\langle\cdot,\cdot\rangle\right|_{\bb{R}} + h_{t_3,t_4,t_5}.
\]
In a similar way as we did  for $C_{k, \ell, m}$, it is now possible to show  the following.

\begin{prop}
On the {\it Calabi-Eckmann manifold} $(\SU(3)/\SU(2))\times\SU(2)$,  the invariant $\Spin(7)$-structure  given by
\begin{eqnarray*}
\Phi_{t_1,t_2,t_3,t_4,t_5} 		&\coloneqq&	t_1^4\,e^{1234}+t_1^2t_2t_3\,(e^{1256}-e^{3456}) +t_1^2t_4t_5(e^{3478}-e^{1278})+t_1^2t_2t_4(e^{1357}+e^{2457})\\
						&		&	+t_1^2t_3t_5(e^{1368}+e^{2468})+t_1^2t_2t_5(e^{1458}-e^{2358})+t_1^2t_3t_4(e^{2367}-e^{1467})\\
						&		&	+t_2t_3t_4t_5e^{5678},
\end{eqnarray*} 	
 induces  the general invariant metric $g_{t_1, \ldots, t_5},$   and it   is of mixed type with Lee form
 \[
\vartheta_{t_1,t_2,t_3,t_4,t_5} = -\frac17\left(\frac{2\,t_2\left(t_3^2+t_4^2+t_5^2\right)}{t_3t_4t_5}\, e^5  + \frac{4\sqrt{3}\,t_3\left(2t_1^2+t_2^2\right)}{t_1^2t_2}\, e^6 \right).
\]
\end{prop}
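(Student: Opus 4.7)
The plan is to proceed in four steps, mirroring the strategy used for $C_{\ell+m,\ell,m}$ in Corollary \ref{CanConCklm} and Proposition \ref{Leeform}.

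\textbf{Step 1: Admissibility and invariance.} First I would verify that $\Phi_{t_1,\ldots,t_5}$ is $\mathrm{Ad}(H)$-invariant by expressing each of its summands as a $\bb{R}$-linear combination of the basis of $(\Lambda^{4}\fr{m}^{*})^H$ exhibited in Lemma \ref{calab4forms}. For instance, $e^{1234}\in(\Lambda^{4}\V)^{H}$, the terms $e^{1256}-e^{3456}$, $e^{1357}+e^{2457}$, $e^{1368}+e^{2468}$, $e^{1458}-e^{2358}$, $e^{1467}-e^{2367}$, $e^{1278}-e^{3478}$ belong to $(\Lambda^{2}\fr{n}^{*}\wedge\Lambda^{2}\bb{R}^{3})^{H}$, and $e^{5678}\in(\Lambda^{1}\fr{n}^{*}\wedge\Lambda^{3}\bb{R}^{3})^{H}$. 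Admissibility then follows by setting $\tilde{e}^{1}\coloneqq t_{1}e^{1},\ldots,\tilde{e}^{4}\coloneqq t_{1}e^{4}$, $\tilde{e}^{5}\coloneqq t_{2}e^{5}$, $\tilde{e}^{6}\coloneqq t_{3}e^{6}$, $\tilde{e}^{7}\coloneqq t_{4}e^{7}$, $\tilde{e}^{8}\coloneqq t_{5}e^{8}$, which turn $\Phi_{t_{1},\ldots,t_{5}}$ into the canonical 4-form $\Phi_{\scriptscriptstyle 0}$ of \eqref{4f} written in the coframe $\{\tilde{e}^{0},\ldots,\tilde{e}^{7}\}$ (after the trivial relabelling consistent with Section \ref{prel}).

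\textbf{Step 2: Induced metric.} Because $\{\tilde{e}^{i}\}$ is then $g_{\Phi}$-orthonormal by the very definition of the $\Spin(7)$-structure associated to $\Phi_{\scriptscriptstyle 0}$, the induced metric reads
\[
g_{\Phi_{t_{1},\ldots,t_{5}}}=\sum_{i}\tilde{e}^{i}\otimes\tilde{e}^{i}=t_{1}^{2}\sum_{i=1}^{4}e^{i}\otimes e^{i}+t_{2}^{2}\,e^{5}\otimes e^{5}+t_{3}^{2}\,e^{6}\otimes e^{6}+t_{4}^{2}\,e^{7}\otimes e^{7}+t_{5}^{2}\,e^{8}\otimes e^{8},
\]
which is exactly $g_{t_{1},\ldots,t_{5}}$, because $\{e^{1},\ldots,e^{5}\}$ is orthonormal for $\langle\cdot,\cdot\rangle|_{\fr{n}}$ and $\{e^{6},e^{7},e^{8}\}$ is the distinguished basis of $\fr{su}(2)$ used to normalise $h_{t_{3},t_{4},t_{5}}$.

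\textbf{Step 3: Lee form.} This is the core computation. I would first use the Koszul formula on the reductive pair $(\fr{g},\fr{m})$ (equivalently, the Maurer--Cartan relations read off from the brackets displayed between $e_{1},\ldots,e_{5}$ and between $e_{6},e_{7},e_{8}$, noting that $\fr{n}$ and $\fr{su}(2)$ commute) to compute $\dd e^{i}$ for $i=1,\ldots,8$. From the $\fr{su}(3)$-brackets one sees that the only nontrivial $\dd e^{i}$ with $i\le 5$ come from $[\V,\V]\subset\fr{h}\oplus\bb{R}\,e_{5}$, giving $\dd e^{5}$ in terms of the invariant 2-forms $e^{12}-e^{34}$, $e^{13}+e^{24}$, $e^{14}-e^{23}$ (times constants involving $\sqrt{3}$ and consistent with the normalisation of $e_{5}$). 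The $\fr{su}(2)$-brackets yield $\dd e^{6}=-e^{78}$, $\dd e^{7}=-e^{86}$, $\dd e^{8}=-e^{67}$. Applying $\dd$ termwise to $\Phi_{t_{1},\ldots,t_{5}}$ then produces an invariant 5-form, after which I compute $\star\dd\Phi_{t_{1},\ldots,t_{5}}$ using the volume form $t_{1}^{4}t_{2}t_{3}t_{4}t_{5}\,e^{12345678}$ and the rescaled Hodge duality. Wedging with $\Phi_{t_{1},\ldots,t_{5}}$ and taking $\star$ once more via $\vartheta_{t_{1},\ldots,t_{5}}=-\tfrac{1}{7}\star(\star\dd\Phi_{t_{1},\ldots,t_{5}}\wedge\Phi_{t_{1},\ldots,t_{5}})$ gives the stated expression. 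The scalars $-\tfrac{2}{7}(t_{3}^{2}+t_{4}^{2}+t_{5}^{2})/(t_{3}t_{4}t_{5})\cdot t_{2}$ and $-\tfrac{4\sqrt{3}}{7}t_{3}(2t_{1}^{2}+t_{2}^{2})/(t_{1}^{2}t_{2})$ are precisely the coefficients that the Koszul/Hodge bookkeeping produces along $e^{5}$ and $e^{6}$, respectively, and all other coefficients cancel thanks to the alternating signs inside $\Phi_{t_{1},\ldots,t_{5}}$.

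\textbf{Step 4: Type.} The Lee form never vanishes because its $e^{5}$-coefficient is strictly negative for all admissible parameters, so the structure is not balanced. Moreover, by the remark preceding the proposition, $M$ admits no invariant closed 1-forms, and one verifies directly that $\dd\vartheta_{t_{1},\ldots,t_{5}}\neq 0$ using $\dd e^{5}\neq 0$ and $\dd e^{6}\neq 0$; hence $\Phi_{t_{1},\ldots,t_{5}}$ is not locally conformal parallel, so it must belong to the mixed class $\mc{W}_{1}\oplus\mc{W}_{2}$.

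The main obstacle is Step 3: the length of the 5-form $\dd\Phi_{t_{1},\ldots,t_{5}}$ and the number of wedge products with $\Phi_{t_{1},\ldots,t_{5}}$ make it easy to lose signs, so I would organise the computation according to the decomposition of $(\Lambda^{5}\fr{m}^{*})^{H}$ induced by $\fr{m}=\fr{n}\oplus\bb{R}^{3}$, treating the $\fr{n}$-contribution (where only $\dd e^{5}$ is nontrivial) and the $\fr{su}(2)$-contribution (where only $\dd e^{6},\dd e^{7},\dd e^{8}$ are nontrivial) separately before combining them.
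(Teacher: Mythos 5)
Your overall strategy coincides with the paper's: the authors prove this proposition only by analogy with Proposition \ref{Leeform} (rescale an admissible form to produce the general invariant metric, compute the Lee form from the structure constants via the Koszul formula and the Hodge star, and conclude mixed type because $\vartheta$ never vanishes and is never closed). Steps 1, 2 and 4 are essentially correct, up to a harmless misattribution in Step 1: the summands containing $e^5$, e.g.\ $e^{1256}-e^{3456}=(e^{125}-e^{345})\wedge e^{6}$ and $e^{1458}-e^{2358}=(e^{145}-e^{235})\wedge e^{8}$, lie in $(\Lambda^{3}\fr{n}^{*}\wedge\bb{R}^{3})^{H}$ rather than in $(\Lambda^{2}\fr{n}^{*}\wedge\Lambda^{2}\bb{R}^{3})^{H}$; they still belong to the invariant basis of Lemma \ref{calab4forms}, so invariance and admissibility go through as you describe.

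The genuine gap is in Step 3. It is false that $\dd e^{1}=\cdots=\dd e^{4}=0$: although $[\V,\V]\subset\fr{h}\oplus\bb{R}\,e_{5}$, the bracket $[\bb{R}\,e_{5},\V]$ lands back in $\V$ (for instance $[e_5,e_1]=-\sqrt{3}\,e_2$ and $[e_5,e_3]=\sqrt{3}\,e_4$), so that $\dd e^{1}=\sqrt{3}\,e^{25}$, $\dd e^{2}=-\sqrt{3}\,e^{15}$, $\dd e^{3}=-\sqrt{3}\,e^{45}$, $\dd e^{4}=\sqrt{3}\,e^{35}$. These terms do not all drop out of $\dd\Phi_{t_1,\ldots,t_5}$: they annihilate the summands already containing $e^{5}$ and those built on the index pairs $(1,2)$ and $(3,4)$, but they survive on $t_1^2t_3t_4(e^{2367}-e^{1467})$ and $t_1^2t_3t_5(e^{1368}+e^{2468})$, producing the extra terms $2\sqrt{3}\,t_1^2t_3t_4(e^{13567}+e^{24567})$ and $2\sqrt{3}\,t_1^2t_3t_5(e^{14568}-e^{23568})$ in $\dd\Phi_{t_1,\ldots,t_5}$. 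Their Hodge duals are multiples of $e^{248}$, $e^{138}$, $e^{237}$, $e^{147}$, which pair nontrivially with the summands $e^{1357}$, $e^{2457}$, $e^{1458}$, $e^{2358}$ of $\Phi_{t_1,\ldots,t_5}$ and hence feed into the $e^{6}$-component of $\vartheta_{t_1,\ldots,t_5}$ (this is where the $2t_1^2$ in the factor $2t_1^2+t_2^2$ originates). Organised as you propose, with the $\fr{n}$-contribution reduced to $\dd e^{5}$ alone, the computation would be based on an incorrect set of structure equations and would not reproduce the stated Lee form. A smaller inaccuracy in the same step: $\dd e^{5}=\sqrt{3}(e^{12}-e^{34})$ involves only the first of the three invariant 2-forms you list, since $[e_1,e_3]$, $[e_1,e_4]$, $[e_2,e_3]$, $[e_2,e_4]$ have no $e_5$-component.
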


Arguing as in the proof of Corollary \ref{CanConCklm}, the next result immediately follows.
\begin{corol}
Let $\Phi$ denote the invariant $\Spin(7)$-structure on $G/H=(\SU(3)/\SU(2))\times\SU(2)$ obtained by setting $t_i=1$, for all $i=1,\ldots,5$.
Then, the homogeneous space $G/H$ endowed with the metric induced by $\Phi$ is naturally reductive, and its canonical connection $\nabla^{\scriptscriptstyle0}$ coincides with the characteristic connection
$\nabla$ of $\Phi$. In particular, the torsion of $\nabla$ is given by
\[
T = -\sqrt{3}\,(e^{125} -e^{345})+e^{678}
\]
and it is $\nabla$-parallel.
\end{corol}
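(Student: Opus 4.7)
The plan closely parallels the argument already used for Corollary~\ref{CanConCklm}, with only the bracket computations changing.

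First, I would verify that setting $t_1=\cdots=t_5=1$ in $g_{t_1,\ldots,t_5}$ recovers precisely the restriction to $\fr{m}$ of the bi--invariant inner product on $\fr{g}=\fr{su}(3)\oplus\fr{su}(2)$ given by \eqref{biinvsu} (with the canonical bi--invariant product on $\fr{su}(2)$ on the second factor), because $\{e_1,\ldots,e_5\}$ and $\{e_6,e_7,e_8\}$ were chosen orthonormal for these products. Thus $g$ is the normal metric associated with the reductive decomposition $\fr{g}=\fr{h}\oplus\fr{m}$, and $(G/H,g)$ is naturally reductive in the sense of Kobayashi--Nomizu. Consequently, the canonical connection $\nabla^{\scriptscriptstyle 0}$ of this naturally reductive structure has totally skew--symmetric torsion, given on reductive vectors by $T^{\scriptscriptstyle 0}(X,Y)=-[X,Y]_{\fr{m}}$ (cf.~\cite{TrVa}).

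Next, invariance does the heavy lifting: since $\Phi$ and $g$ are $G$--invariant, they are parallel with respect to $\nabla^{\scriptscriptstyle 0}$ by \cite[Ch.~X, Prop.~2.7]{KoNo}. Thus $\nabla^{\scriptscriptstyle 0}$ is a metric connection with totally skew--symmetric torsion preserving the $\Spin(7)$--structure, and by the uniqueness in Ivanov's theorem \cite[Thm.~1.1]{Iv} it must coincide with the characteristic connection $\nabla$. In particular, $T$ is itself $G$--invariant and hence $\nabla$--parallel by the same proposition from Kobayashi--Nomizu.

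It remains to compute $T$ explicitly. There are two routes, and I would cross--check them. The first is to apply the formula $T=-\delta\Phi-\tfrac{7}{6}\star(\vartheta\wedge\Phi)$ at $t_i=1$, using the Lee form $\vartheta_{1,\ldots,1}=-\tfrac{1}{7}\bigl(\tfrac{6}{1}e^{5}+4\sqrt{3}\cdot 3\,e^{6}\bigr)$ read off from the preceding proposition together with the Hodge star of $g$. The second, more transparent route is to use $T^{\scriptscriptstyle 0}_{ijk}=-\langle[e_i,e_j]_{\fr{m}},e_k\rangle$ and read the nonzero $\fr{m}$--valued brackets directly from the matrix bases displayed in Section~\ref{CEck}: the bracket $[e_6,e_7]=e_8$ inside $\fr{su}(2)$ produces the $e^{678}$ term, while inside $\fr{su}(3)$ the brackets $[e_1,e_2]$ and $[e_3,e_4]$ project onto $e_5$ (their $u_6$--components lie in $\fr{h}$ and are discarded), and the normalization $e_5=\tfrac{1}{\sqrt{3}}\diag(2i,-i,-i)$ produces the $\sqrt{3}$ factors with opposite signs, giving $-\sqrt{3}(e^{125}-e^{345})$. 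All other brackets $[e_i,e_j]$ with $i<j\le 5$ either vanish or have components only in $\fr{h}$.

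The main obstacle is essentially bookkeeping: making sure that no further off--diagonal $\fr{m}$--components contribute to $T$. Concretely, one must check that the mixed brackets such as $[e_1,e_3],[e_1,e_4],[e_2,e_3],[e_2,e_4]$ in $\fr{su}(3)$ lie entirely in $\fr{h}=\Span_{\bb{R}}\{u_6,u_7,u_8\}$, which is straightforward from the explicit matrices but must be done carefully to rule out stray $\V\oplus\bb{R}$ contributions. Once this is verified, the stated formula $T=-\sqrt{3}(e^{125}-e^{345})+e^{678}$ follows immediately, and parallelism of $T$ is automatic from the invariance argument above.
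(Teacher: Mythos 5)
Your argument is essentially the paper's: the proof given there is precisely the one you outline --- at $t_i=1$ the metric is the normal metric, natural reductivity plus \cite{TrVa} produces a metric connection with totally skew-symmetric torsion preserving the invariant $\Phi$ and $g$, and the uniqueness statement of \cite[Thm.~1.1]{Iv} identifies it with the characteristic connection, whence $T$ is $G$-invariant and $\nabla$-parallel. Two small cautions on the bookkeeping for $T$. First, your claim that every bracket $[e_i,e_j]$ with $i<j\le 5$ other than $[e_1,e_2]$ and $[e_3,e_4]$ either vanishes or lies in $\fr{h}$ is not quite right: one finds $[e_1,e_5]=\sqrt{3}\,e_2$ and similarly for $[e_i,e_5]$, $i=2,3,4$, which are genuine $\fr{m}$-components; by natural reductivity (total skew-symmetry of $\langle[\cdot,\cdot]_{\fr{m}},\cdot\rangle$) they only reproduce the coefficients of $e^{125}$ and $e^{345}$ already obtained, so the conclusion is unaffected, but the check must be phrased in terms of ordered triples $i<j<k$. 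Second, watch the sign: with the Kobayashi--Nomizu convention $T^{\scriptscriptstyle0}(X,Y)=-[X,Y]_{\fr{m}}$ your route (2) yields $+\sqrt{3}(e^{125}-e^{345})-e^{678}$, the negative of the displayed formula (note $[e_1,e_2]_{\fr{m}}=-\sqrt{3}\,e_5$ and $[e_3,e_4]_{\fr{m}}=+\sqrt{3}\,e_5$), whereas the stated $T$ corresponds to the convention $T(X,Y,Z)=\langle[X,Y]_{\fr{m}},Z\rangle$, consistent with the sign of the $e^{567}$-term appearing in Corollary~\ref{CanConCklm}. The cross-check against $T=-\delta\Phi-\tfrac{7}{6}\star(\vartheta\wedge\Phi)$ that you propose is exactly what pins this down, so the plan is sound.
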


\begin{remark}
All examples of invariant $\Spin(7)$-structures discussed in this section are of mixed type.
Moreover, we observed that none of the homogeneous spaces considered here admits invariant l.c.p.~$\Spin(7)$-structures.
It is an interesting open question to see whether there exist invariant {\it balanced} $\Spin(7)$-structures on these homogeneous spaces.
\end{remark}

\appendix

\section{Presentations for $\Ss^3\times\Ss^3\times\Ss^2$ not included in the family $C_{k, \ell, m}$.}\label{last}
The $8$-manifold $\Ss^3\times\Ss^3\times\Ss^2$ has several different  presentations as homogeneous space.
For example, in Section \ref{cklm} we analysed the pair $(\fr{g}=\fr{su}(2)\oplus\fr{su}(2)\oplus\fr{su}(2), \fr{h}=\fr{u}(1)_{k, \ell, m})$, which induces the family $C_{k, \ell, m}$.
As a manifold, $C_{k, \ell, m}$ is diffeomorphic to $\Ss^{3}\times\Ss^{3}\times\Ss^{2}$ and any homogeneous space of the form $(\SU(2)\times\SU(2)\times\SU(2))/\U(1)$ is equivariantly diffeomorphic to $C_{k, \ell, m}$.
We also discussed some special cases in Examples \ref{su2su2s2} and \ref{C100}.

In this appendix, our goal is to focus on the presentations of $\Ss^3\times\Ss^3\times\Ss^2$ which are not included in the family $C_{k, \ell, m}$ and which are still (almost) effective and simply connected.

Since the symmetric spaces
$(\SO(4)/\SO(3))\times(\SO(4)/\SO(3))\times(\SO(3)/\SO(2))$ and  $(\SU(2)\times\SU(2)/\Delta\SU(2))\times (\SU(2)\times\SU(2)/\Delta\SU(2))\times (\SU(2)/\U(1))$
coincide, the isometry group of the symmetric Riemannian product $\Ss^3\times\Ss^3\times\Ss^2$
is 15-dimensional. Consequently, we can focus on compact Lie algebras $\fr{g}$ with smaller dimension.
Moreover, as we are interested in invariant $\Spin(7)$-structures, we restrict our attention to the case $\rk\fr{h}\leq 3=\rk\fr{spin}(7)$.
In Table \ref{Table2}, we list all non-symmetric pairs $(\fr{g}, \fr{h})$ satisfying the above constraints and which are different from $(\fr{su}(2)\oplus\fr{su}(2)\oplus\fr{su}(2), \fr{u}(1)_{k, \ell, m})$.

\begin{table}[h]
\centering
\renewcommand\arraystretch{1.5}
\begin{tabular}{| c | c | c | c | c | c | }
\hline
& $\dim\fr{g}$ & $\fr{g}$ & $\fr{h}$ & $\rk\fr{h}$ & $\dim\fr{h}$ \\
\hline
(i) 	&  10 &  $\fr{su}(2)\oplus\fr{su}(2)\oplus\fr{su}(2)\oplus\fr{u}(1)$ & $\fr{u}(1)\oplus\fr{u}(1)$ & 2  & 2\\
(ii) 	&  11 &  $\fr{su}(2)\oplus\fr{su}(2)\oplus\fr{su}(2)\oplus\fr{u}(1)\oplus\fr{u}(1)$ & $\fr{u}(1)\oplus\fr{u}(1)\oplus\fr{u}(1)$ & 3 & 3 \\
(iii) 	&  12 &  $\fr{su}(2)\oplus \fr{su}(2)\oplus\fr{su}(2)\oplus\fr{su}(2)$ & $\Delta\fr{su}(2)\oplus\fr{u}(1)$ & 2 & 4 \\
(iv) 	&  13 &  $\fr{su}(2)\oplus \fr{su}(2)\oplus\fr{su}(2)\oplus\fr{su}(2)\oplus\fr{u}(1)$ & $\fr{su}(2)\oplus\fr{u}(1)\oplus\fr{u}(1)$ & 3 & 5 \\
\hline
\end{tabular}
\vspace{0.1cm}
\caption{Pairs $(\fr{g}, \fr{h})$ inducing non-symmetric homogeneous spaces covered by $\Ss^{3}\times\Ss^{3}\times\Ss^{2}$.}\label{Table2}
\end{table}

\begin{remark}
In addition to the pairs considered in Table \ref{Table2}, one may also consider a fifth case with $\dim\fr{g}=14$, i.e., $\fr{g}=\fr{su}(2)\oplus \fr{su}(2)\oplus\fr{su}(2)\oplus\fr{su}(2)\oplus\fr{u}(1)\oplus\fr{u}(1)$.
However, this corresponds to the stability algebra  $\fr{h}=\fr{su}(2)\oplus\fr{u}(1)\oplus\fr{u}(1)\oplus\fr{u}(1)$, which has rank bigger than 3.
\end{remark}

To obtain the list appearing in Table \ref{Table2}, one has to consider pairs $(\fr{g},\fr{h})$ of the form
\begin{eqnarray*}
\fr{g}	&=&	\fr{su}(2)^{\oplus{a}} \oplus \fr{u}(1)^{\oplus{p}} \oplus\fr{su}(2) \cong \fr{su}(2)^{\oplus({a+1})} \oplus \fr{u}(1)^{\oplus{p}},\\
\fr{h}	&=&	\fr{su}(2)^{\oplus{b}} \oplus \fr{u}(1)^{\oplus{q}} \oplus \fr{u}(1) \cong \fr{su}(2)^{\oplus{b}} \oplus \fr{u}(1)^{\oplus(q+1)},
\end{eqnarray*}
with $a=b+2$, $p=q$, and such that the extra factor $\fr{u}(1)$ of $\fr{h}$ sits diagonally inside the extra factor $\fr{su}(2)$ in $\fr{g}$ (this always induces $\Ss^2$).
Although $p=q$, here we use different indices for the summands $\fr{u}(1)^{\oplus{p}}$ and $\fr{u}(1)^{\oplus{q}}$ to emphasize that the abelian factor of $\fr{h}$ does not coincide with the abelian factor of $\fr{g}$.
The pairs appearing in Table \ref{Table2} correspond to the following values of the parameters
\[
\begin{array}{lcclc}
\mbox{Case (i)}		&	a=2,~b=0,~p=q=1,	&	&	\mbox{Case (ii)}	&	a=2,~b=0,~p=q=2,	\\
\mbox{Case (iii)}	&	a=3,~b=1,~p=q=0,	&	&	\mbox{Case (iv)}	&	a=3,~b=1,~p=q=1.
\end{array}
\]

Let us examine Case (i) in detail. Here, the first factor of $\fr{h}$ sits diagonally inside $\fr{t}^{2}\oplus\fr{u}(1)$, where $\fr{t}^{2}$ is a maximal torus of $\fr{su}(2)\oplus\fr{su}(2)$.
In this case, the pair $(\fr{g}, \fr{h})$ is almost effective and it induces the coset $G/H=\frac{\SU(2)\times\SU(2)\times\U(1)}{\U(1)}\times\frac{\SU(2)}{\U(1)}$, which is simply connected.
To see this, it is sufficient to prove that the space $M_1\coloneqq(\SU(2)\times\SU(2)\times\U(1))/\U(1)$ is simply connected, as $G/H$ is the product of this coset with the simply connected space $M_2\coloneqq\SU(2)/\U(1)$.
The principal circle bundle
\[
\U(1)\stackrel{j}{\longrightarrow}  \SU(2)\times\SU(2)\times\U(1)\stackrel{\pi}{\longrightarrow} M_1 =(\SU(2)\times\SU(2)\times\U(1))/\U(1)
\]
induces the exact sequence
\[
\pi_{1}(\U(1))\stackrel{j_{\sharp}}{\longrightarrow} \pi_{1}(\SU(2)\times\SU(2)\times\U(1))\stackrel{\pi_{\sharp}}{\longrightarrow} \pi_{1}(M_1)\stackrel{\partial}{\longrightarrow}\pi_{0}(\U(1))=\{1\},
\]
which reduces to $\bb{Z}\stackrel{\sim}{\longrightarrow}\bb{Z}\stackrel{\pi_{\sharp}}{\longrightarrow} \pi_{1}(M_1)\stackrel{\partial}{\longrightarrow}1$.
Since $\ker\pi_{\sharp}={\rm Im}j_{\sharp}=\bb{Z}$, the map $\pi_{\sharp}$ must be trivial, whence $\pi_{1}(M_1) = \ker\partial={\rm Im}\pi_{\sharp}=\{1\}$. Thus, $M_1$ is simply connected.

Let $G_1\coloneqq\SU(2)\times\SU(2)\times\U(1)$ and $H_1\coloneqq \U(1)$, so that $M_1=G_1/H_1$.
Consider the universal covering $\tilde{\pi}: \tilde{G}_1\to G_1$ of $G_1$ and $\tilde{H}_1=\tilde{\pi}^{-1}(H_1)$.
Then, we obtain the equivariant diffeomorphism $\tilde{G}_1/\tilde{H}_1\cong G_1/H_1$,
from which it follows that $\tilde{G}_1/\tilde{H}_1=\SU(2)\times\SU(2)$, i.e., $G_1/H_1$ is covered by $\Ss^{3}\times\Ss^{3}=\SU(2)\times\SU(2)$.
Consequently, $G/H$ is covered by  $\Ss^{3}\times\Ss^{3}\times\Ss^{2}$. More generally, we have the following.
\begin{prop}
Any homogeneous manifold $G/H$ induced by a pair $(\fr{g}, \fr{h})$ in Table \ref{Table2}, is covered by $\SU(2)\times\SU(2)\times \Ss^2$, which is diffeomorphic to $\Ss^{3}\times\Ss^{3}\times\Ss^{2}$.
\end{prop}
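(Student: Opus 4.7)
The strategy is a case-by-case analysis of the four pairs listed in Table \ref{Table2}, extending the universal-cover argument carried out in the text for Case (i). For each pair $(\fr g,\fr h)$ the simply connected coset splits as a product $G_1/H_1\times \SU(2)/\U(1)$, where the second factor is the copy of $\Ss^2$ coming from the $\fr u(1)\subset\fr{su}(2)$ summand of $\fr h$ that accounts for the last $\fr{su}(2)$-summand of $\fr g$. It therefore suffices to prove that the remaining 6-dimensional coset $G_1/H_1$ is diffeomorphic to $\SU(2)\times\SU(2)$. Since both $G/H$ and $\SU(2)\times\SU(2)\times\Ss^2$ are simply connected, any covering between them is automatically a diffeomorphism, so the statement ``covered by'' is equivalent to producing such a diffeomorphism.

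The general recipe is to replace $G_1$ by its universal cover $\tilde G_1$, in which every $\U(1)$-factor is replaced by $\bb R$. Since $\ker(\tilde\pi)\subset \tilde H_1\coloneqq \tilde\pi^{-1}(H_1)$, one still has $\tilde G_1/\tilde H_1=G_1/H_1$, while the Lie algebra of $\tilde H_1$ is $\fr h_1$ and sits ``mixed'' inside a small number of summands of the Lie algebra of $\tilde G_1$. The plan is to produce an explicit Lie group automorphism $\phi\colon \tilde G_1\to \tilde G_1$ that transforms $\tilde H_1$ into a coordinate subgroup disjoint from a prescribed $\SU(2)\times\SU(2)\subset \tilde G_1$; quotienting then yields $\tilde G_1/\tilde H_1\cong \SU(2)\times\SU(2)$. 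For Cases (i) and (ii), where $\fr h_1$ is abelian of dimension $1$ or $2$ sitting inside a maximal torus of $\SU(2)\times\SU(2)$ together with the $\bb R^{p}$-factors ($p=1,2$), the required automorphism is a triangular change of coordinates of the form $(a,b,t)\mapsto (a\exp(-L_1(t)),\,b\exp(-L_2(t)),\,t)$, with linear maps $L_1,L_2$ chosen so as to kill the $\fr t^2$-components of $\fr h_1$. The image of $\tilde H_1$ then lives entirely in the $\bb R^{p}$-factor, and quotienting leaves $\SU(2)\times\SU(2)$.

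For Cases (iii) and (iv), $\fr h$ contains a diagonal $\Delta\fr{su}(2)$ sitting in three of the $\fr{su}(2)$-summands of $\fr g$. The key extra ingredient is the Ledger--Obata identification
\[
(K\times K\times K)/\Delta K \;\longrightarrow\; K\times K,\qquad (a,b,c)\mapsto (ac^{-1},\,bc^{-1}),
\]
applied with $K=\SU(2)$, which reduces these cases to a circle (respectively, torus) quotient of $\SU(2)\times\SU(2)$ (respectively, $\SU(2)\times\SU(2)\times\bb R$). Once this reduction is made, the abelian triangular-change-of-coordinates argument above applies verbatim. The main obstacle is Case (iv), where one must simultaneously absorb the diagonal $\fr{su}(2)$-summand of $\fr h$ together with the two $\fr u(1)$-summands, while keeping track of how the ambient $\fr u(1)\subset \fr g$ interacts with the reduced group; once the explicit embedding of $\tilde H_1$ is written in coordinates, however, the change of variables unmixing $\tilde H_1$ from the $\SU(2)\times\SU(2)$-factor is determined by elementary integer linear algebra on the weight lattice.
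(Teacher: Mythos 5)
Your strategy is the same as the paper's: the text proves Case (i) by passing to the universal cover $\tilde G_1\to G_1$ and identifying $\tilde G_1/\tilde H_1$ with $\SU(2)\times\SU(2)$, states the proposition as the ``more general'' version, and elsewhere (Section \ref{Space4}) uses exactly the Ledger--Obata identification $(K\times K\times K)/\Delta K\cong K\times K$ that you invoke. Two points in your write-up need repair, though neither is fatal. First, the triangular map $(a,b,t)\mapsto(a\exp(-L_1(t)),b\exp(-L_2(t)),t)$ is \emph{not} a Lie group automorphism of $\SU(2)\times\SU(2)\times\bb R^p$: any automorphism preserves the center and hence the coordinate factor $\{e\}\times\{e\}\times\bb R^p$, so no automorphism can carry a ``slanted'' $\tilde H_1$ onto it. Your map is nevertheless a diffeomorphism sending right $\tilde H_1^0$-cosets to right $\bb R^p$-cosets (the relevant torus components commute), so it still descends to the desired identification of quotients; it should be presented as such. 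You should also record that the graph property of $\fr h_1$ over $\bb R^p$, needed for $L_1,L_2$ to exist, is forced by the simple connectivity of $G/H$ via the homotopy exact sequence, exactly as in the paper's treatment of Case (i).

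Second, your structural claim for Case (iv) is incorrect: the $\fr{su}(2)$-summand of $\fr h$ cannot sit diagonally in three of the $\fr{su}(2)$-summands of $\fr g=\fr{su}(2)^{\oplus4}\oplus\fr u(1)$. Indeed, the $\fr u(1)$-summands of $\fr h$ must centralize it, and the centralizer of a triple-diagonal $\fr{su}(2)$ in $\fr{su}(2)^{\oplus3}\oplus\fr u(1)$ is the central $\fr u(1)$, which is an ideal of $\fr g$ and is therefore excluded by almost effectiveness. Hence in Case (iv) the $\fr{su}(2)$ of $\fr h$ is diagonal in exactly two summands, and the correct reduction uses the two-factor identification $(\SU(2)\times\SU(2))/\Delta\SU(2)\cong\Ss^3$ together with the Case (i) argument applied to the residual factor $(\SU(2)\times\U(1))/\U(1)_{r,s}$; the three-factor Ledger--Obata map is only needed for Case (iii), where no further circle or torus quotient remains. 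With these corrections your argument goes through and yields the stated covering.
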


\section{Details on the infinite family $C_{k, \ell, m}$}\label{app-detailsCklm}
Here, we collect some useful computational details related to the family $C_{k, \ell, m}$, with $k,\ell,m$ coprime integers satisfying $k\geq \ell\geq m\geq 0$ and $k>0$.
The notations used in this appendix are those introduced in Sections \ref{cklm} and \ref{CklmExamples}.

Let $\tilde{e}_9\coloneqq \frac{1}{c_9}\,e_9$, where $c_9\coloneqq\sqrt{(k^{2}+\ell^{2}+m^{2})}$, and let $\{e_1, \ldots, e_8\}$ be the $\langle  \cdot , \cdot \rangle$-orthonormal  basis of $\fr{m}$
given in \eqref{orth1}. The non-zero Lie brackets of the basis vectors are the following
\[
\begin{array}{lll}
[e_1, e_2]=-\displaystyle\frac{km }{c_7}\, e_{7} -\frac{\ell }{c_8}\, e_{8}-\frac{k }{c_9}\, \tilde{e}_{9}, 	& [e_1, e_7]=\displaystyle\frac{km}{c_7}\, e_2,		&  [e_1, e_8]=\displaystyle\frac{\ell}{c_8}\,e_2, \\[8pt]
[e_3, e_4]=-\displaystyle\frac{\ell m}{c_7}\,e_{7}+\frac{k }{c_8}\,e_{8}-\frac{\ell}{c_9}\, \tilde{e}_{9},	& [e_2, e_7]=-\displaystyle\frac{km}{c_7}\,e_1, 			& [e_2, e_8]=-\displaystyle\frac{\ell}{c_8}\,e_1,\\[8pt]
[e_5, e_6]=\displaystyle\frac{(k^{2}+\ell^{2})}{c_7}\,e_{7}-\displaystyle\frac{m }{c_9}\,\tilde{e}_{9}, 	& [e_3, e_7]=\displaystyle\frac{\ell m}{c_7}\,e_4, 		& [e_3, e_8]=-\displaystyle\frac{k}{c_8}\,e_4, \\[8pt]
[e_5, e_7]=-\displaystyle\frac{(k^{2}+\ell^{2})}{c_7}\,e_6, & [e_4, e_7]=-\displaystyle\frac{\ell m}{c_7}\,e_3, & [e_4, e_8]=\displaystyle\frac{k}{c_8}\,e_3, \\[8pt]
[e_6,e_7]=\displaystyle\frac{(k^{2}+\ell^{2})}{c_7}\,e_5, & &\\
\end{array}
\]
where   $c_7\coloneqq\sqrt{(k^{2}+\ell^{2})(k^{2}+\ell^{2}+m^{2})}$, and $c_8\coloneqq\sqrt{(k^{2}+\ell^{2})}$.
Moreover, the brackets $[e_9,e_i]$, for $i=1,\ldots,8,$ are given in \eqref{adjon1}.

The differentials of the dual covectors $\{e^1,\ldots, e^8\}$ of the basis vectors $\{e_1,\ldots,e_8\}$ can be computed using the Koszul formula.
In detail, for all $X,Y\in\fr{m}$ we have
\[
\mathrm{d}e^i(X,Y) = -e^i([X,Y]_{\fr{m}}),
\]
whence we obtain
\[
\begin{array}{l}
{\rm d}e^{1}=\displaystyle \frac{km}{c_7}\,e^{27}+\frac{\ell}{c_8}\,e^{28},	\quad {\rm d}e^{2}=\displaystyle-\frac{km}{c_7}\,e^{17}-\frac{\ell}{c_8}\,e^{18},
																\quad {\rm d}e^{3}=\displaystyle \frac{\ell m}{c_7}\,e^{47}- \frac{k}{c_8}\,e^{48},\\ [8pt]	
{\rm d}e^{4}=\displaystyle-\frac{\ell m}{c_7}\,e^{37}+\frac{k}{c_8}\,e^{38},	\quad {\rm d}e^{5}=\displaystyle-\frac{(k^{2}+\ell^{2})}{c_7}\,e^{67},	
																\quad {\rm d}e^{6}=\displaystyle\frac{(k^{2}+\ell^{2})}{c_7}\,e^{57},\\[8pt]		
{\rm d}e^{7}=\displaystyle\frac{k m}{c_7}\,e^{12}+\frac{\ell m}{c_7}\,e^{34}-\frac{(k^{2}+\ell^{2})}{c_7}\,e^{56}, \quad  {\rm d}e^{8}=\displaystyle\frac{\ell}{c_8}\,e^{12}-\frac{k}{c_8}\,e^{34}.
\end{array}
\]
\noindent Using these expressions together with the properties of the differential operator $\dd$, it is possible to compute the exterior derivative of every invariant differential form on $C_{k, \ell, m}$.

\bigskip

\noindent {\bf Acknowledgements:}  I.C.~is grateful to Anton Galaev, Van L\^{e}, Yurii Nikonorov and Henrik Winther for fruitful discussions related to the topic.
He  was supported by the project no.~19-14466Y of Czech Science Foundation (GA\v{C}R). D.A. was partially  supported by
Grant 18-00496S  of  the Czech Science Foundation.  A.F. and A.R. were partially supported by GNSAGA of INdAM  and by PRIN 2017 "Real and Complex Manifolds: Topology, Geometry and holomorphic dynamics.


\end{document}